\theoremstyle{plain}
\newtheorem{thm}{Theorem}[section]
\theoremstyle{plain}
\newtheorem{lem}[thm]{Lemma}
\newtheorem{prop}[thm]{Proposition}
\theoremstyle{definition}
\newtheorem{defi}{Definition}[section]
\newtheorem{con}{Condition}[section]
\newtheorem{rem}{Remark}[section]
\newenvironment{Assumptions}
{
\setcounter{enumi}{0}

\begin{enumerate}}
{\end{enumerate} }
\newenvironment{Assumptions2}
{
\setcounter{enumi}{0}

\begin{enumerate}}
{\end{enumerate} }
\newenvironment{Assumptions3}
{
\setcounter{enumi}{0}

\begin{enumerate}}
{\end{enumerate} }
\newenvironment{Assumptions4}
{
\setcounter{enumi}{0}

\begin{enumerate}}
{\end{enumerate} }
\newcommand{\R}{\ensuremath{\mathbb{R}}}
\newcommand{\goto}{\ensuremath{\rightarrow}}
\newcommand{\eps}{\ensuremath{\varepsilon}}
\numberwithin{equation}{section} \allowdisplaybreaks
\title[Well-posedness and LDP for SHE with logarithmic nonlinearity]
{L\'{e}vy driven stochastic heat equation with logarithmic nonlinearity: Well-posedness and Large deviation principle}
\date{}
\subjclass[2000]{45K05, 46S50, 49L20, 49L25, 91A23, 93E20}
\keywords{ Nonlinear stochastic PDE, Logarithmic nonlinearity, Strong and martingale solution, Jakubowski’s version of Skorokhod theorem.}
\author[ R. Kavin]{R. Kavin}
\address[R. Kavin] {\newline 
Department of Mathematics,
Indian Institute of Technology Delhi,
Hauz Khas, New Delhi, 110016, India.}
\email[] {maz198757@iitd.ac.in}
\author[A. K. Majee]{Ananta K. Majee}
\address[A. K. Majee]{\newline
Department of Mathematics, Indian Institute of Technology Delhi,
Hauz Khas, New Delhi-110016, India.}
\email[]{majee@maths.iitd.ac.in}
\thanks{}
\begin{document}
\begin{abstract}
In this article, we study the well-posedness theory for solutions of the stochastic heat equations with logarithmic nonlinearity perturbed by multiplicative L\'{e}vy noise. By using Aldous tightness criteria and Jakubowski’s version of the Skorokhod theorem on non-metric spaces along with the standard $L^2$-method, we establish the existence of a path-wise unique strong solution. Moreover, by using a weak convergence method, we establish a large deviation 
principle for the strong solution of the underlying problem. Due to the lack of linear growth and locally Lipschitzness of the term $ u \log(|u|)$ present in the underlying problem, the logarithmic Sobolev inequality and the nonlinear versions of Gronwall’s inequalities play a crucial role. 
\end{abstract}

\maketitle

\section{Introduction}
Let $D\subset \R^d$ be a bounded domain with Lipschitz boundary $\partial D$,  
$N({\rm d}z,{\rm d}t)$ be a time-homogeneous Poisson random measure \cite[pp. 631]{Peszat-2007,Erika-2009} on $({\pmb E}, \mathcal{B}({\pmb E}))$ with intensity measure $m({\rm d}z)$, defined on the given filtered probability space
$(\Omega,  \mathcal{F},\mathbb{P}, \mathbb{F}=\{ \mathcal{F}_t\}_{t \geq 0})$ satisfying the usual hypothesis, where $({\pmb E},\mathcal{B}({\pmb E}),m)$ is a $\sigma$-finite measure space. 
We are interested in the well-posedness theory and theory of large deviation principle of a strong solution for the nonlinear stochastic problem perturbed by L\'{e}vy noise:
 \begin{equation}\label{eq:log-nonlinear}
 \begin{aligned}
  {\rm d} u(t,x) - \Delta u(t,x) \,{\rm d}t &=  u(t,x) \log |u(t,x)|\,{\rm d}t + \int_{{\pmb E}}\eta(u(t,x);z)\widetilde{N}({\rm d}z,{\rm d}t),\quad t>0,~x\in D\,, \\
  u(t,x)&=0,  \quad t>0,~x\in \partial D\,, \\
  u(0,x)&=u_0(x),\quad x\in D\,.
  \end{aligned}
 \end{equation}
  In \eqref{eq:log-nonlinear}, $\eta: \R \times \pmb{E} \mapsto \R$
 is a given noise coefficient signifying the multiplicative nature of the noise, and $$ \widetilde{N}({\rm d}z,{\rm d}t):= N({\rm d}z,{\rm d}t)-\, m({\rm d}z)\,{\rm d}t,$$ 
 the time-homogeneous compensated Poisson random measure.
 \vspace{0.1cm}

 In the last decades, among researchers, there has been growing interest in studying the stochastic partial differential equations (in short, SPDEs) with monotone or locally monotone coefficients/non-linearity. We recommend readers see the monographs \cite{Zabczyk-2014} and \cite{Michael-2007} for a thorough understanding of SPDEs in general. However, coefficients with logarithmic non-linearity do not fit into this category. The exploration of logarithmic non-linearity has been addressed in the study of relativistic fields in physics and nonlinear wave mechanics, as discussed in references \cite{Rosen-1969,Bialynicki-1976}. Logarithmic nonlinear partial differential equations have applications across various fields of science and engineering, such as physics, mathematical biology, material science, geophysics, nonlinear photonics, and fluid dynamics \cite{Edelstein-2005,Zloshchastiev-2019,LeiWang-2012}. Equation \eqref{eq:log-nonlinear} could be viewed as a stochastic perturbation of heat equation with logarithmic non-linearity perturbed by  L\'{e}vy noise. Due to technical novelties and the wide range of applications in physical contexts, the study of the well-posedness results for equation \eqref{eq:log-nonlinear} is more subtle. There is a requirement to analyze and quantify the probability of rare events across various fields, enabling an understanding of the limiting behaviour of specific probability models. Furthermore, the following natural question arises: what is the asymptotic relationship between the solution of the underlying problem \eqref{eq:log-nonlinear} and the corresponding deterministic equation when the stochastic perturbation is significantly small?  In other words, one needs to study the small noise large deviation principle (LDP in short) for the solution of \eqref{eq:log-nonlinear}.
\vspace{0.1cm}

In the absence of noise term, \eqref{eq:log-nonlinear} becomes a deterministic heat equation involving logarithmic non-linearity.  A number of authors have contributed since then, and we mention a few of the works e.g, \cite{Chen-2015,Chen-2015-JD,Alfaro-2017,Yang-2016} and references therein. The deterministic heat equation with logarithmic non-linearity on a bounded domain was investigated by Chen et al. in \cite{Chen-2015}; see also \cite{Chen-2015-JD}. They established the existence of global solutions and blow-up at infinity under some appropriate conditions. However, their study did not establish the uniqueness of the solutions. In \cite{Alfaro-2017}, the authors studied the problem on a whole real line and established the well-posedness of solutions in the case of a bounded and smooth initial condition.
\vspace{0.1cm}

We mention the paper \cite{Dalang-2019}, where the authors investigated stochastic reaction diffusion equations with logarithmic non-linearity driven by space-time white noise on the bounded domain $[0,1]$. They have shown non-blow up $L^2$-valued solution but did not provide the well-posedness theory for it. Very recently, Shang and Zhang in \cite{Shang-2022} have revealed the global existence and uniqueness of solutions of the stochastic heat equation \eqref{eq:log-nonlinear} in case of multiplicative Brownian noise. The authors have constructed a sequence of approximate solutions (via Galerkin methods), and shown the tightness of the approximate solutions on some appropriate space. Using the theorems of Skorokhod and Prokhorov along with the pathwise uniqueness of solutions and the Yamada Watanabe theorem, they have shown the well-posedness of a global strong solution of the underlying problem in the case of sub-linear and super-linear growth of the noise coefficient. In the case of sub-linear growth, they are able to derive the global moment estimate of the solution. 
\vspace{0.1cm}

\vspace{0.1cm}

To the best of our knowledge, there are currently no results available regarding the global solution aspect in this context. We mention the works of \cite{Yuchao-2018, Fubao-2019} where the SDEs with logarithic non-linearity was studied. Taking primary motivation from \cite{Yuchao-2018}, in the first part of this article, we aim to generalize the well-posedness result of \cite{Shang-2022} in the case of jump-diffusion noise.

\vspace{0.1cm}

The concept of large deviation theory, which concerns the asymptotic behaviour of remote tails of some sequence of probability distribution, is a popular and important topic in probability and statistics; see \cite{Stroock-1984,Varadhan-1984,Varadhan-1996,Stroock-1989}. In the last three decades, there have been numerous results available over LDP for SPDEs, and most of them are limited to SPDEs driven by Gaussian noise; see for example, \cite{Budhiraja-2008, Rockner-2004, Gess-2023, Goldys-2017,Matoussi-2021, Liu-2010, Wang-2012, Rockner-2012, Hong-2021} and references therein to name a few. Recently, in \cite{Kavin-2024,Kavin-2023LDP} the authors achieved LDP for stochastic evolutionary $p$-Laplace equation and pseudo-monotone evolutionary equation, respectively. Moreover, because of the nonlinear perturbation flux function, the authors could not use the results of monotone or locally monotone SPDEs to establish LDP. However, coefficients with logarithmic nonlinearity do not fit into this category. Very recently, using nonlinear versions of Gronwall's inequalities and log-Sobolev inequalities along with the weak convergence approach, the authors in \cite{Pan-2022} established a large deviation principle for the solutions of equation \eqref{eq:log-nonlinear}
with Brownian noise--- which is neither locally Lipschitz nor locally monotone.

There has not been very much works about the LDP for SPDEs driven by L{\'e}vy noise. In \cite{Budhiraja-2013}, the authors utilized the results of Budhiraja and Dupuis \cite{Budhiraja-2000,Budhiraja-2011} to establish the LDP for SPDEs with Poisson noises. Their method involved establishing the Laplace principle in Polish spaces using the weak convergence approach. The study of LDP was carried out for multiplicative L{\'e}vy noise for fully nonlinear stochastic equation in \cite{Yang-2015}, for $2$-D stochastic Navier-Stokes equations driven by multiplicative L{\'e}vy noise in \cite{Zhai-2015, Brzezniak-2023},  for $2$-D primitive equations in \cite{Jin-2021}, for stochastic generalized Ginzburg-Landau equation in \cite{Ran-2023}, for stochastic generalized porous media equations driven by {L}\'evy noise in \cite{Wu-2024}. Although SPDEs with monotone or locally monotone coefficients have been extensively studied (cf.\cite{Xiong-2018, Li-2020,Hong-2021} etc., and references therein), the systems characterized by logarithmic nonlinearity remain relatively less explored in the literature. This is primarily due to the lack of well-posedness results previously established for such systems. In the second part of this article, we propose to study Freidlin-Wentzell's large deviation principle for \eqref{eq:log-nonlinear} along with the weak convergence approach and the improved sufficient criterion proposed by \cite{Zhang-2023}.
 
\subsection{Aim and outline of the paper}
The aim of this article is bi-fold. In the first part, we analyze the well-posedness theory for the SPDEs \eqref{eq:log-nonlinear} driven by L\'{e}vy noise. The second part establishes Freidlin-Wentzell's large deviation principle for the strong solution of \eqref{eq:log-LDP}. More precisely, we proceed as follows.
\begin{itemize}
    \item [{\rm (i)}] Firstly, we prove the well-posedness of strong solutions of \eqref{eq:log-nonlinear}. To do so, we first construct an approximate solution $\{u_n(t) \}$ (cf.~\eqref{eq:finite-dim-def}) of the finite dimensional stochastic differential equations \eqref{eq:finite-dim} via Galerkin method, and then using logarithmic Sobolev inequality and the nonlinear versions of Gronwall's inequality, we derive its necessary {\it a-priori} estimates---which is essential to prove the tightness of $\big(\mathcal{L}(u_n)\big)_{n \in \mathbb{N}}$ in some appropriate space via Aldous condition~(see Definition \ref{defi:aldous-condition}). Then, we use Jakubowski's version of the Skorokhod theorem on a non-metric space to generate a new probability space $(\bar{\Omega}, \bar{\mathcal{F}}, \bar{\mathbb{P}})$, and a sequence of random variables $u_{n}^{*}, u_*$ in $(\bar{\Omega}, \bar{\mathcal{F}}, \bar{\mathbb{P}})$ such that with probability $1$, $u_{n}^{*} \rightarrow u_*$
    which leads to existence of weak solution $\bar{\pi}:=\big( \bar{\Omega}, \bar{\mathcal{F}}, \bar{\mathbb{P}}, \bar{\mathbb{F}}, N_*, u_* \big)$ of \eqref{eq:log-nonlinear} on the time interval $[0,T_2]$ ~(see Lemma \ref{lem:est-galerkin} for the definition of $T_2$). Moreover, using the moment estimates, we construct a weak solution on the time interval $[0,T]$ for any given $t>0$. We utilize the standard $L^2$-contraction principle to determine the path-wise uniqueness of weak solutions. Later, we use the Yamada-Watanabe theorem to assure the existence of a unique strong solution of \eqref{eq:log-nonlinear}. 
 \item [{\rm (ii)}] Secondly, we establish the small perturbation LDP for the strong solution of \eqref{eq:log-nonlinear} on the solution space $\mathcal{E}_T = \mathbb{D}([0,T];L^{2}(D))\cap L^{2}([0,T];W_{0}^{1,2}(D))$ via the weak convergence method and the variational representation of the Poisson random measure.  
 \begin{itemize}
 \item [a)] We first prove the well-posedness result of the corresponding skeleton equation~(cf.~\eqref{eq:log-skeleton}) based on the technique of Galerkin method, nonlinear versions of Gronwall's lemma, the log-Sobolev inequality and the adaptation of a series of technical lemmas. We construct a sequence of approximate solutions and avail of {\it a-priori} estimates together with the necessary compact embedding to prove the existence of a solution for the skeleton equation. Uniqueness is achieved via the standard $L^2$--contraction principle. 
 \item[b)] Based on the improved sufficient conditions \cite{Zhang-2023} and a certain generalization of Girsanov-type theorem for Poisson random measures together with the weak convergence approach \cite{Budhiraja-2011} and the {\it a-priori} estimates for the skeleton equation and the $\tilde{u}_\epsilon$, the solution of equation \eqref{eq:epsilon-main}, we first establish the LDP for \eqref{eq:log-LDP} on the space $\mathcal{E}_{T^*}$ for some $T^*>0$ and then extend it to the whole time interval $[0,T]$ by induction. 
 \end{itemize}
\end{itemize}

\vspace{0.2cm}

The remaining part of the paper is organized as follows. We state the technical assumptions, define the notion of solutions
for the problems \eqref{eq:log-nonlinear}, and state the main results of the paper in Section \ref{sec:technical-framework}. Section \ref{sec:existence-weak-solu} is devoted to establish the well-posedness theory for the strong solution of \eqref{eq:log-nonlinear}. In Section \ref{sec:log-skeleton}, we show the well-posedness result for the skeleton equation. The final Section \ref{sec:log-LDP} is devoted to proving the LDP for the strong solution of \eqref{eq:log-LDP}.

 \section{Technical framework and statement of the main results}\label{sec:technical-framework}
 We refer to numerous generic constants in every section of this article by the letters C, K, etc.
 Here and throughout, we represent the standard spaces
 of $p^{th}$ order integrable functions on $D$ and  the standard Sobolev spaces on $D$  by $(L^p(D), \|\cdot\|_{L^p(D)})$ and $(W_0^{1,p}(D), \|\cdot\|_{W_0^{1,p}(D)})$ respectively for $p\in \mathbb{N}$. By $W^{-1,p^\prime}(D)$, we denote the dual space of $W_0^{1,p}(D)$ with duality pairing $\big\langle \cdot,\cdot\big\rangle$, where $p^\prime$ is the convex conjugate of $p$. For any $x,y \in \R$, we denote $x\wedge y:=\min\{x,y\}$ and 
 $x\vee y:=\max\{x,y\}$. Moreover, for any $z\in \R$, we set $\log_{+}z:=\log(1 \vee z)$.  
For any $0\le a<b < +\infty$, define the space 
$$ \mathcal{E}_{a,b}:= \mathbb{D}([a,b];L^2(D))\cap L^2([a,b];W_0^{1,2}(D)).$$
It is standard that the space $\mathcal{E}_{a,b}$ endowed with the norm $\|\cdot\|_{\mathcal{E}_{a,b}}$ defined by 
\begin{align*}
\|y\|_{\mathcal{E}_{a,b}}:= \sup_{s\in [a,b]}\|y(s)\|_{L^2(D)} + \Big( \int_a^b \|y(s)\|_{W_0^{1,2}(D)}^2\,{\rm d}s\Big)^\frac{1}{2}
\end{align*}
is a Banach space. Consider the metric $\rho_{a,b}$ on $\mathcal{E}_{a,b}$ as
\begin{align}
\rho_{a,b}^2(u,v):=  \sup_{s\in [a,b]}\|u(s)-v(s)\|_{L^2(D)}^2 + \int_a^b \|u(s)-v(s)\|_{W_0^{1,2}(D)}^2\,{\rm d}s\,,~~~u,v\in  \mathcal{E}_{a,b}.\label{defi:metric}
\end{align}
For any $b>0$, we write $\rho_b=\rho_{0,b}$ and $\mathcal{E}_b=\mathcal{E}_{0,b}$.
 
\subsection{Notion of solutions and main theorem}
In the theory of stochastic evolution equations, two types of solution concepts are considered
namely strong solution and weak solution. We first provide the notion of a strong solution for \eqref{eq:log-nonlinear}.
 \begin{defi}[Strong solution] \label{defi:strong-solun}
 Let  $(\Omega, \mathcal{F}, \mathbb{P}, \{\mathcal{F}_t\}_{t\ge 0} )$ be a given complete stochastic basis, $N$ be a time-homogeneous Poisson random measure on $\pmb{E}$ with intensity measure $m({\rm d}z)$ defined on $(\Omega, \mathcal{F}, \mathbb{P}, \{\mathcal{F}_t\}_{t\ge 0} )$.  We say that a predictable process $u:\Omega \times[0,\infty)\goto L^2(D)$ is a global strong solution of  \eqref{eq:log-nonlinear}, if and only if for any $T>0$, $ u\in L^2(\Omega; \mathbb{D}([0,T]; L^2(D)))\cap L^2(\Omega;L^2([0,T]; W_0^{1,2}(D))) \cap L^2(\Omega; L^{\infty}([0,T];L^{2}(D)))$ such that 
 \begin{itemize}
 \item[i)] $u(0,\cdot)=u_0$ in $L^2(D)$, 
\item[ii)] $\mathbb{P}$-a.s., and for any $t\ge 0$, 
   \begin{align}
   u(t)& = u_0 +  \int_0^t \Delta u(s) \,{\rm d}s + \int_{0}^t  u(s) \log|u(s)| \,{\rm d}s +  \int_0^t \int_{|z| > 0}\eta(u(s);z)\widetilde{N}({\rm d}z,{\rm d}s)  \quad \text{in}~~ W^{-1,2}(D)\,. \label{eq:defi-weak-form}
  \end{align}
   \end{itemize}
  \end{defi}
In situations where the drift and diffusion operators lack Lipschitz continuity, establishing the existence of a strong solution might not be feasible, and therefore one needs to consider the concept of a weak solution.
  \begin{defi}[Weak solution]
  We say that a $6$-tuple $\bar{\pi}=\big( \bar{\Omega}, \bar{\mathcal{F}}, \bar{\mathbb{P}}, \{\bar{\mathcal{F}}_t\}, \bar{N}, \bar{u}\big)$ is a weak solution of \eqref{eq:log-nonlinear}, if
 \begin{itemize}
  \item [(i)] $(\bar{\Omega}, \bar{\mathcal{F}},\bar{\mathbb{P}}, \{\bar{\mathcal{F}}_t\}_{t\ge 0} )$ is a complete stochastic basis,
  \item[ii)] $\bar{N}$ is a time-homogeneous Poisson random measure on $\pmb{E}$ with intensity measure $m({\rm d}z)$ defined on  $(\Bar{\Omega}, \Bar{\mathcal{F}}, \Bar{\mathbb{P}}, \{\Bar{\mathcal{F}}_t\}_{t\ge 0} )$,
   \item[(iii)] $\bar{u}$ is a $L^2(D)$-valued $\{\bar{\mathcal{F}}_t\}$-predictable process such that for any $T>0$
   \begin{itemize}
\item[a)] $\bar{u}(0)=u_0$ in $L^2(D)$,
\item[b)]  $ \bar{u}\in L^2(\bar{\Omega}; \mathbb{D}([0,T];L^2(D)))\cap L^2(\bar{\Omega};L^2([0,T]; W_0^{1,2}(D))) \cap L^2(\bar{\Omega}; L^{\infty}([0,T];L^{2}(D)))$,
\item[c)] $\bar{\mathbb{P}}$-a.s., and for any $t\ge 0$,  \eqref{eq:defi-weak-form} holds.
   \end{itemize}
 \end{itemize}
 \end{defi}
We will study the well-posedness theory for problem \eqref{eq:log-nonlinear} under the following assumptions.
 \begin{Assumptions}
\item \label{A1} Initial data $u_0$ is deterministic and $u_0 \in L^2(D)$. 
\item \label{A2} There exist non-negative constants $K_1, K_2 $ and a non-negative function ${\tt h}\in L^\infty(\pmb{E}, m)\cap L^1(\pmb{E},m)$ such that for all $ u, v \in \R$ and $z \in \pmb{E}$
\begin{align*}
    | \eta(u;z)-\eta(v;z)| \le \Big\{ K_1 |u-v| + K_2 |u-v| \big(\log _{+} (|u| \vee |v|) \big)^{\frac{1}{2}} \Big\} {\tt h}(z) .
\end{align*}

\item  \label{A3} $\eta$ satisfies the following sub-linear growth condition: there exist non-negative constants $M_1, M_2$ and 
$\theta \in [0,1)$ such that 
\begin{align*}
    | \eta(u;z)| \le \Big\{ M_1 + M_2 |u|^{\theta} \Big\} {\tt h}(z) \quad \text{ for all $ u\in \R$ and $z \in \pmb{E}$}.
\end{align*} 


\end{Assumptions}



Now, we state one of the main results of our article.

 \begin{thm}[Well-posedness of strong solution]\label{thm:existence-strong}
 Let the assumptions \ref{A1}-\ref{A3} hold true. Then there exists a unique global strong solution of \eqref{eq:log-nonlinear} in the sense of Definition \ref{defi:strong-solun}. Moreover, for any $T>0$ and $p\ge 2$, there exists a positive constant $C=C(p,\theta,\|u_0\|_{L^2(D)},T)$ such that
\begin{equation}\label{esti:bound-weak-solun}
 \mathbb{E}\bigg[\sup_{0\le t\le T} \|u(t)\|_{L^2(D)}^p + \int_0^T \|u(t)\|_{L^2(D)}^{p-2}\| u(t)\|_{W_0^{1,2}(D)}^2\,{\rm d}t \bigg] \le C(p,\theta,\|u_0\|_{L^2(D)},T)\,.
\end{equation}
 \end{thm}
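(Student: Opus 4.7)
The plan is to follow outline (i) of the introduction: construct finite–dimensional Galerkin approximations, extract a subsequential limit via tightness plus Jakubowski's version of the Skorokhod theorem to build a weak martingale solution, prove pathwise uniqueness, and upgrade to a strong solution through the Yamada--Watanabe theorem. Let $\{e_k\}_{k\ge 1}$ be the eigenbasis of the Dirichlet Laplacian in $L^{2}(D)$, $H_n:=\operatorname{span}\{e_1,\dots,e_n\}$, and $\Pi_n$ the associated orthogonal projection. The Galerkin system \eqref{eq:finite-dim} is a finite–dimensional SDE driven by $\widetilde N$; the drift $\Pi_n[\Delta u_n+u_n\log|u_n|]$ is locally Lipschitz off $\{u_n=0\}$ and the jump coefficient has sublinear growth by \ref{A3}, so a unique c\`adl\`ag solution $u_n$ exists up to a possible blow-up, which the estimates below will exclude.

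\medskip
\noindent\textbf{A priori estimates.} Apply It\^o's formula to $\|u_n(t)\|_{L^{2}}^{p}$ for arbitrary $p\ge 2$. The Laplacian contributes a dissipative $-p\|u_n\|_{L^{2}}^{p-2}\|\nabla u_n\|_{L^{2}}^{2}$, while the logarithmic drift yields $p\|u_n\|_{L^{2}}^{p-2}\int_{D}u_n^{2}\log|u_n|\,\mathrm{d}x$. The logarithmic Sobolev inequality
\[
\int_{D}u^{2}\log\frac{|u|}{\|u\|_{L^{2}}}\,\mathrm{d}x\le \tfrac12\|\nabla u\|_{L^{2}}^{2}+C\|u\|_{L^{2}}^{2}
\]
rewrites this contribution as $\tfrac{p}{2}\|u_n\|_{L^{2}}^{p-2}\|\nabla u_n\|_{L^{2}}^{2}+C\|u_n\|_{L^{2}}^{p}\log(e+\|u_n\|_{L^{2}}^{2})$, and the gradient piece is absorbed into the dissipation. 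The Poisson integrals, estimated via BDG and \ref{A3} with $\theta<1$, contribute a $C(1+\|u_n\|_{L^{2}}^{p})$ term. Bihari's (nonlinear Gronwall) inequality applied to
\[
\mathbb{E}\sup_{[0,t]}\|u_n\|_{L^{2}}^{p}\le C + C\int_{0}^{t}\mathbb{E}\sup_{[0,s]}\|u_n\|_{L^{2}}^{p}\log\!\Big(e+\mathbb{E}\sup_{[0,s]}\|u_n\|_{L^{2}}^{p}\Big)\mathrm{d}s
\]
yields the bound \eqref{esti:bound-weak-solun} uniformly in $n$ on every $[0,T]$, and in particular rules out explosion of the Galerkin system.

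\medskip
\noindent\textbf{Tightness and passage to the limit.} The gradient estimate places $(u_n)$ in a bounded subset of $L^{2}(\Omega;L^{2}(0,T;W_{0}^{1,2}))$. Testing the equation in $W_{0}^{1,2}$ and using \ref{A3}, one verifies the Aldous condition (Definition \ref{defi:aldous-condition}) in $W^{-1,2}(D)$, implying tightness of $\mathcal{L}(u_n)$ on the non–metric space $\mathbb{D}([0,T];L^{2}_{w})\cap L^{2}([0,T];L^{2})$. Jakubowski's Skorokhod theorem then produces a new stochastic basis $(\bar\Omega,\bar{\mathcal{F}},\bar{\mathbb{P}},\bar{\mathbb{F}})$, a Poisson measure $N_{*}$, and a.s. convergent copies $u_n^{*}\to u_{*}$. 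The Laplacian passes by weak convergence in $L^{2}(0,T;W_{0}^{1,2})$; the logarithmic drift passes via dominated convergence, using a.s. convergence together with the high-$p$ moment bound; the compensated Poisson integral is identified using uniform square–integrability of the integrand guaranteed by \ref{A3}. This produces a weak martingale solution on a short interval $[0,T_{2}]$, which the moment estimate extends to any $[0,T]$ by iteration.

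\medskip
\noindent\textbf{Uniqueness and conclusion (main obstacle).} Given two solutions $u,v$ on the same stochastic basis sharing the noise, It\^o on $\|u-v\|_{L^{2}}^{2}$ confronts the drift difference $2\int_{D}(u\log|u|-v\log|v|)(u-v)\,\mathrm{d}x$. The elementary inequality $(a\log|a|-b\log|b|)(a-b)\le (a-b)^{2}+(a-b)^{2}\log^{+}(|a|\vee|b|)$ bounds this by $C\|u-v\|_{L^{2}}^{2}+\int_{D}(u-v)^{2}\log^{+}(|u|\vee|v|)\,\mathrm{d}x$; the log-Sobolev inequality applied to $w:=u-v$ (together with a stopping-time localization controlled by the moment bound) dominates the latter by $\tfrac12\|\nabla w\|_{L^{2}}^{2}+C\|w\|_{L^{2}}^{2}\log(e+\|w\|_{L^{2}}^{-2})$, the gradient being absorbed by the dissipation. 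The jump increment bounded by \ref{A2} produces a matching $K_{2}^{2}\|w\|_{L^{2}}^{2}\log^{+}(|u|\vee|v|)\,{\tt h}(z)^{2}$ contribution after integration in $z$. The resulting integral inequality $\mathbb{E}\|w(t)\|_{L^{2}}^{2}\le C\int_{0}^{t}\mathbb{E}\|w\|_{L^{2}}^{2}\log(e+\|w\|_{L^{2}}^{-2})\,\mathrm{d}s$ triggers Osgood's criterion, forcing $w\equiv 0$. Pathwise uniqueness combined with existence of a weak solution invokes Yamada--Watanabe for jump SPDEs, yielding the unique strong solution. Bound \eqref{esti:bound-weak-solun} transfers from the Galerkin level by Fatou. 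The principal difficulty is precisely that $u\log|u|$ is neither monotone nor locally Lipschitz, so both the a priori estimate and the uniqueness depend on the log-Sobolev/Bihari pair being quantitatively tight enough to close while exactly matching the $\log^{1/2}$ modulus of continuity of $\eta$ imposed in \ref{A2}.
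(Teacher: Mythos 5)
Your high-level plan (Galerkin, tightness via Aldous, Jakubowski, $L^2$-contraction for pathwise uniqueness, Yamada--Watanabe) is the same as the paper's, but your central \emph{a priori} estimate step has a genuine gap that the paper resolves in a specific way you have not reproduced.

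\textbf{The a priori estimate cannot be closed globally in one pass.} You write that Bihari's inequality applied to
$\mathbb{E}\sup_{[0,t]}\|u_n\|_{L^{2}}^{p}\le C+C\int_{0}^{t}\mathbb{E}\sup_{[0,s]}\|u_n\|_{L^{2}}^{p}\log\big(e+\mathbb{E}\sup_{[0,s]}\|u_n\|_{L^{2}}^{p}\big)\,{\rm d}s$
gives \eqref{esti:bound-weak-solun} on every $[0,T]$. But this integral inequality does not actually follow from the pathwise bound: the relevant function $x\mapsto x\log(e+x)$ is convex, so Jensen gives $\mathbb{E}[Y\log Y]\ge\mathbb{E}[Y]\log\mathbb{E}[Y]$ --- the wrong direction --- and you cannot pass the expectation inside the logarithm. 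The paper (Lemma~\ref{lem:est-galerkin}) instead applies the logarithmic Gronwall inequality (Lemma~\ref{lem:log-Gronwall}) \emph{pathwise}, obtaining $\|u_n(t)\|_{L^2}^{p}\le(1\vee {\tt M}(t))^{e^{t}}e^{C(e^{t}-1)}$ $\mathbb{P}$-a.s. Taking expectations then requires controlling the $e^{t}$-th power of the stochastic term ${\tt M}(t)$, which (after BDG) involves moments of $\|u_n\|_{L^2}$ of order roughly $(p-1)e^{t}$. This closes against the $p$-th moment only when $e^{t}\le\frac{p}{p-1+\theta}$, i.e.\ for $t\le T_p:=\log\frac{p}{p-1+\theta}$. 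The paper therefore proves the bound only on the short interval $[0,T_p]$ and then extends to arbitrary $[0,T]$ by the restart argument of \cite[Step~2, Theorem~5.4]{Shang-2022}, using that $N_{t+{\tt s}}(\cdot)-N_{{\tt s}}(\cdot)$ is again a time-homogeneous PRM. Your proposal elides both the time restriction and the restart, which is precisely where the difficulty lies (and is the reason the sub-linear exponent $\theta<1$ in \ref{A3} enters the threshold $T_p$).

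\textbf{The Osgood reduction in uniqueness is not justified.} You claim that the log-Sobolev inequality applied to $w=u-v$ dominates $\int_D w^{2}\log^{+}(|u|\vee|v|)\,{\rm d}x$ by $\tfrac12\|\nabla w\|_{L^2}^2+C\|w\|_{L^2}^2\log(e+\|w\|_{L^2}^{-2})$, yielding a one-shot Osgood inequality. This is not what the log-Sobolev inequality produces: one must first relate $\log^{+}(|u|\vee|v|)$ pointwise to $\log|w|$, and this case analysis (Lemma~\ref{lem:result-2} in the paper, from \cite{Shang-2022}) inevitably generates terms of the form $\frac{1}{(1-\alpha)e}\big(\|u\|_{L^2}^{2(1-\alpha)}+\|v\|_{L^2}^{2(1-\alpha)}\big)\|w\|_{L^2}^{2\alpha}$ for $\alpha\in(0,1)$. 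For fixed $\alpha<1$ the term $\|w\|^{2\alpha}$ dominates $\|w\|^{2}\log(1/\|w\|)$ as $\|w\|\to0$, so the resulting inequality is \emph{not} of Osgood type. The paper instead localizes by the stopping time $\tau_R^\delta$ to bound $\|u_1\|,\|u_2\|\le R$, applies the Bihari-type Lemma~\ref{lem:nonlinear-gronwall} with exponent $\alpha$, restricts to $t\le T^{*}=\big(\tfrac{e}{4(1+K_2^2)}\big)^2$, and only then lets $\alpha\to1$ to force ${\tt u}(t)=0$; global uniqueness follows by iterating on $[T^{*},2T^{*}],\dots$. Your Osgood argument skips the $\|w\|^{2\alpha}$ obstruction entirely, and so as written it does not close. (Incidentally, the direction of Jensen is fine here, since $x\mapsto x\log(e+1/x)$ is concave; the issue is purely that this is not the form one actually gets.)

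A smaller point: non-explosion of the finite-dimensional Galerkin SDE is not derived from the a priori bound in the paper but from the locally one-sided Lipschitz/growth estimates of Lemma~\ref{est:finite-log} and the SDE result of \cite{Fubao-2019}; the a priori bound is then proved on the (already global) $u_n$ via a stopping-time localization.
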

 \subsection{Large deviation principle}
 We recall some fundamental definitions and results of the Freidlin–Wentzell-type large deviation theory. Let $\{Y^\epsilon\}_{\epsilon > 0}$  be a sequence of random variables defined on a given probability space $(\Omega, \mathcal{F}, \mathbb{P})$ taking values in some Polish space $\mathbb{O}$. It is well-known that the large deviation principle characterizes the exponential decay
of the remote tails of some sequences of probability distributions. The rate of such decay is described by
the “rate function”.
\begin{defi}[Rate Function]
  A function ${\tt I} : \mathbb{O} \rightarrow [0,\infty]$ is called a rate function on $\mathbb{O}$ if it is lower semi-continuous. If the level set $\{x \in \mathbb{O} : {\tt I}(x) \leq M \}$ is compact for each  $M < \infty$, then ${\tt I}$ is called a good rate function.
\end{defi}
\begin{defi} [Large deviation principle]
The sequence  $\{Y^\epsilon\}_{\epsilon > 0}$  is said to satisfy the large deviation principle on  $\mathbb{O}$ with rate function ${\tt I}$ if for any Borel subset $O$ of $\mathbb{O}$ 
\begin{equation*}
    \begin{aligned}
        - \underset{x\in O^0}{\inf}  {\tt I}(x) \leq \underset{\epsilon \longrightarrow 0}{\liminf}\, \epsilon^{2}\log\big(\mathbb{P}(Y^\epsilon \in O)\big) & \leq    \underset{\epsilon \longrightarrow 0}{\limsup}\,\epsilon^{2} \log \big( \mathbb{P}(Y^\epsilon \in O)\big)
        \leq - \underset{x\in \bar{O}}{\inf} {\tt I}(x),
    \end{aligned}
\end{equation*}
where $O^0$ and $\bar{O}$ are the interior and closure of $O$ in $\mathbb{O}$ respectively. 
\end{defi}
 Let $\mathcal{O}$ be a locally compact Polish space and $\mathcal{M}(\mathcal{O})$ be the space of all measures ${\tt m}$ on $(\mathcal{O}, \mathcal{B}(\mathcal{O}))$ such that ${\tt m}(K)<\infty$ for every compact $K$ in $\mathcal{O}$. Let $\mathcal{T}(\mathcal{M}(\mathcal{O}))$ be the weakest topology on $\mathcal{M}(\mathcal{O})$ such that for every  $f \in C_c(\mathcal{O})$
 $$\mathcal{M}(\mathcal{O}) \ni {\tt m}\mapsto \int_{\mathcal{O}} f(u) \,{\tt m}(du)\in \R$$
 is continuous, where $C_c(\mathcal{O})$ denotes the space of continuous functions with compact support. It is well-known that $(\mathcal{M}(\mathcal{O}), \mathcal{T}(\mathcal{M}(\mathcal{O})))$ is a Polish space; cf.~\cite{Budhiraja-2011}.
 \vspace{0.1cm}
 
 Fix a $\sigma$-finite measure $m$ on 
 $(\pmb{E}, \mathcal{B}(\pmb{E}))$ as mentioned in the introduction. Set ${\tt M}_T:=\mathcal{M}([0,T]\times \pmb{E})$ and $\mathbb{M}_T:=\mathcal{M}([0,T]\times \pmb{E} \times [0,\infty))$ for fixed $T>0$. Both $({\tt M}_T, \mathcal{T}({\tt M}_T))$ and $(\mathbb{M}_T, \mathcal{T}(\mathbb{M}_T))$ are Polish spaces. In this part, we specify the probability space 
 $$ \Omega:= \mathbb{M}_T,\quad \mathcal{F}:=\mathcal{T}(\mathbb{M}_T).$$
 For the given $m\in \mathcal{M}(\pmb{E})$, by \cite[Section 1.8]{Watanabe-1981},  there exists a unique probability measure $\mathbb{P}$ on $(\Omega, \mathcal{F})$ such that the canonical map
 $$ N: \Omega \ni \nu\mapsto \nu\in \mathbb{M}_T$$
 is a Poisson random measure~(PRM) on $[0,T]\times\pmb{E} \times [0,\infty)$ with intensity measure $\lambda_T\otimes m \otimes \lambda_\infty$, where $\lambda_T$ and $\lambda_\infty$ are Lebesgue measure on $[0,T]$ and $[0,\infty)$ respectively. For each $t\in [0,T]$, define the $\sigma$-algebra $\mathcal{G}_t$ as
 $$\mathcal{G}_t:= \sigma\{N((0, s] \times A): 0 \leq s \leq t, A \in \mathcal{B}(\pmb{E} \times [0,\infty))\}.$$
 We take $\mathbb{F}:=\{\mathcal{F}_t\}_{t\in [0,T]}$ as the $\mathbb{P}$-completion of $\{\mathcal{G}_t\}_{t\in [0,T]}$. By $\mathcal{P}_T$, we denote the $\mathbb{F}$-predictable $\sigma$-field on $[0,T]\times \Omega$. The corresponding compensated Poisson random measure is denoted by $\widetilde{N}$.  Denote 
 $$ \mathcal{A}:=\Big\{ \varphi: [0,T]\times\pmb{E}\times  \Omega\rightarrow  [0,\infty): ~\text{ $\varphi$ is $(\mathcal{P}_T\otimes \mathcal{B}(\pmb{E}))/\mathcal{B}([0,\infty)])$ measurable}\Big\}.$$
 For $\varphi \in \mathcal{A}$, define a counting process $N^{\varphi}$ on $[0,T]\times \pmb{E}$ by
 \begin{align}
 N^{\varphi}((0, t] \times A):=\int_{(0, t] \times A} \int_{(0, \infty)} 1_{[0, \varphi(s, z)]}(r)\, N({\rm d}s, {\rm d} z, {\rm d} r), \quad t \in[0, T],~ A \in \mathcal{B}(\pmb{E})\,. \label{eq:controlled-random-measure}
 \end{align}
 $N^\varphi$ can be viewed as a controlled random measure, with $\varphi$ selecting the intensity. Analogously, $\widetilde{N}^\varphi$ is defined by replacing $N$ with $\widetilde{N}$ in \eqref{eq:controlled-random-measure}. When $\varphi \equiv {\tt a} \in(0, \infty)$, we write $N^{\varphi}=N^{\tt a}$. In particular, for any ${\tt a}>0$, $N^{\tt a}$ is a Poisson random measure on $[0,T]\times \pmb{E}$ with intensity measure $\lambda_T({\rm d}t)\otimes {\tt a} m({\rm d}z)$, and $\widetilde{N}^{\tt a}$ is
equal to the corresponding compensated Poisson random measure; cf.~\cite[Proposition 5.1]{Brzezniak-2023}. 
\vspace{0.1cm}

Let us define 
\begin{align*}
\mathcal{H}_2:=\Big\{ {\tt h}: \pmb{E}\rightarrow \R^+:~\text{${\tt h}$ is Borel measurable and there exists $\delta \in (0,\infty)$ such that} \\
\int_{E} \exp\{\delta\, {\tt h}^2(z)\}\,m({\rm d}z) < +\infty ~\text{for all $E\in \mathcal{B}(\pmb{E})$ with $m(E)< +\infty$}\Big\}\,.
\end{align*}
To study the large deviation principle, we consider the following assumptions on the diffusion coefficient $\eta(u;z)$.
\begin{Assumptions2} 
\item \label{B1} There exist ${\tt h}_1,~{\tt h}_2 \in \mathcal{H}_2\cap L^1(\pmb{E},m)\cap L^2(\pmb{E},m)$ with ${\tt h}_2 \le 1$ such that
\begin{align}
& \|\eta (u;z)-\eta(v;z)\|_{L^2(D)} \le {\tt h}_1(z)\|u-v\|_{L^2(D)}~~ \text{for all $ u,v~\in L^2(D)$ and $z \in \pmb{E}$}\,, \label{cond:lipschitz-ldp} \\
&\|\eta(u,z)\|_{L^2(D)} \le {\tt h}_2(z)\big( 1 + \|u\|_{L^2(D)}^\theta\big)~~\text{for all $ u~\in L^2(D)$ and $z \in \pmb{E}$}\,. \label{cond:linear-growth-ldp}
\end{align}
\end{Assumptions2}
\begin{rem}\label{rem:assumption}
Thanks to \cite[Remark $3.1$]{Wu-2024-1}, one has for  any $\beta\ge 0$,
 $$L^2(\pmb{E},m)\cap \mathcal{H}_2 \subset L^{\beta + 2}(\pmb{E}, m)\cap \mathcal{H}_2.$$
\end{rem}

\begin{rem}\label{rem:regarding-condition}
If ${\tt h}\in \mathcal{H}_2$, then for every $\delta \in (0,\infty)$ and all $E\in \mathcal{B}(\pmb{E})$ with $m(E)< +\infty$, 
$$ \int_{E} \exp(\delta {\tt h}(z))\,m({\rm d}z) < + \infty.$$

\end{rem}
For $\epsilon>0$, consider the following SPDE on the probability space $(\Omega, \mathcal{F}, \mathbb{P}, \mathbb{F})$ as described above. 
\begin{equation}
 \label{eq:log-LDP}
      \begin{aligned}
            du_{\epsilon} - \Delta u_{\epsilon} \, {\rm d}t&= u_{\epsilon} \log |u_{\epsilon}| \,{\rm d}t + \epsilon \int_{|z|>0}\eta(u_{\epsilon};z)\widetilde{N}^{\epsilon^{-1}}({\rm d}z,{\rm d}t),~~(t,x)\in (0,T]\times D\,, \\
            u_{\epsilon}(0,x) &= u_0(x),~~~x\in D\,.
        \end{aligned}
\end{equation}
In view of Remark \ref{rem:assumption} and Theorem \ref{thm:existence-strong}, 
under the assumptions \ref{A1} and \ref{B1}, equation \eqref{eq:log-LDP} has a unique strong solution $u_\epsilon$ whose trajectories a.s. belong to the space $\mathcal{E}_T:=\mathbb{D}([0,T];L^2(D))\cap L^2([0,T];W_0^{1,2}(D))$.  Moreover, in view of Yamada-Watanabe theorem \cite[Theorem 8]{Zhao-2014}, there exists a family $\{\mathcal{G}^\epsilon\}_{\epsilon>0}$, where $\mathcal{G}^\epsilon: {\tt M}_T\rightarrow \mathcal{E}_T$ is a measurable map, such that $u_\eps=\mathcal{G}^\epsilon(\epsilon N^{\epsilon^{-1}})$. 

 \vspace{0.1cm}
 
Like in the Brownian noise case, we need to introduce so called skeleton equation. For any Borel measurable function $\varphi: [0,T]\times \pmb{E} \rightarrow [0,\infty)$, define 
$$
L_T(\varphi):=\int_{[0,T]\times \pmb{E}} \big(\varphi(t, z) \log(\varphi(t,z))-\varphi(t,z)+1\big)\,m({\rm d}z)\,{\rm d}t\,.$$
For any $N\in \mathbb{N}$, define 
$$ S_N = \left \{ g: [0,T]\times \pmb{E} \rightarrow [0,\infty) : L_{T}(g) \leq N  \right \}, \quad \mathbb{S}:=\cup_{N\ge 1} S_N\,.$$
Any $g \in S_N$ can be identified with a measure $\nu_{T}^{g} \in {\tt M}_T$, defined by
$$\nu_{T}^{g}(A) = \int_{A} g(s,z)\, m({\rm d}z)\,{\rm d}s, \quad~~A \in \mathcal{B}([0,T]\times \pmb{E}).$$
This identification induces a topology $\tau(S_N)$ on $S_N$ in which $(S_N, \tau(S_N))$ is a compact space; see \cite[Appendix]{Budhiraja-2013}. Define the following set
$$ \mathcal{U}_N: = \left \{\phi \in \mathcal{A} : \phi(\omega) \in S_N , \mathbb{P} \,\text{-a.e.}\, \omega\in \Omega \right \}. $$
Fix an increasing sequence $\{\pmb{E}_n\}_{n\in \mathbb{N}}$ of compact sets in $\pmb{E}$ such that $\pmb{E}=\bigcup_{n=1}^{\infty} \pmb{E}_n$. Define
\begin{align*}
\tilde{\mathcal{A}}&:= \bigcup_{n=1}^\infty \Big\{ \varphi \in \mathcal{A}:~~\varphi(t,z,\omega)\in [\frac{1}{n},n]~\text{if $(t,z,\omega)\in [0,T]\times \pmb{E}_n\times \Omega$} \notag \\
&\hspace{2cm} \text{and $\varphi(t,z,\omega)=1$ if $(t,z,\omega)\in [0,T]\times \pmb{E}_n^c\times \Omega$}\Big\}\,, \notag  \\
\tilde{\mathcal{U}}_N&:=\left \{\varphi \in \tilde{\mathcal{A}} : \varphi(\cdot,\cdot,\omega) \in S_N , \mathbb{P} \,\text{-a.e.}\, \omega\in \Omega \right \}.
\end{align*}
For any $g\in \mathbb{S}$, consider the following deterministic equation (called skeleton equation):
\begin{equation} \label{eq:log-skeleton}
\begin{aligned}
du_{g}(t,x)  - \Delta u_{g} \, {\rm d}t&= u_{g} \log |u_{g}| \,{\rm d}t + \int_{\pmb{E}}\eta(u_{g}(t,x);z)\, (g(t,z)-1) \, m({\rm d}z)\,{\rm d}t,~~(t,x)\in (0,T]\times D\,, \\
 u_{g}(0,x) &= u_0(x),~~x\in D,\quad u_g(t,x)=0,~~(t,x)\in [0,T]\times \partial D\,.
\end{aligned}
\end{equation}
\begin{thm}\label{thm:skeleton}
Let the assumptions \ref{A1} and \ref{B1} hold true. Then there exists a unique solution $u_g \in C([0,T]; L^2(D))\cap L^2([0,T]; W_0^{1,2}(D))$ of the skeleton equation \eqref{eq:log-skeleton}. Moreover, for fixed $N\in \mathbb{N}$, there exists a constant $C_N>0$ such that
\begin{align}
\sup_{g\in S_N} \Big\{ \sup_{t\in [0,T]}\|u_g(t)\|_{L^2(D)}^2 + \int_0^T\|u_g(t)\|_{W_0^{1,2}(D)}^2\,{\rm d}t \Big\} \le C_N\,. \label{esti:uni-skeleton}
\end{align}
\end{thm}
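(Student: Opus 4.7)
The plan is to implement the by-now-standard four-step program for PDEs with logarithmic drift: (i) construct a Galerkin approximation, (ii) derive uniform a-priori estimates by combining the logarithmic Sobolev inequality with a nonlinear (Bihari/Osgood) Gronwall inequality, (iii) extract a limit via Aubin-Lions compactness and pass to the limit in the nonlinear terms by Vitali's theorem, and (iv) prove $L^2$-uniqueness using the same log-Sobolev plus nonlinear Gronwall package. The ingredient peculiar to the Poisson-controlled setting will be a Young-type exponential-entropy inequality $ab \le e^{\sigma a}-1+\sigma^{-1}(b\log b-b+1)$ which, combined with $L_T(g)\le N$ and $h\in\mathcal{H}_2\cap L^1(\pmb{E},m)$, produces uniform-in-$g\in S_N$ bounds on integrals of the form $\int_0^T\!\!\int_{\pmb{E}} h(z)|g(s,z)-1|\,m({\rm d}z)\,{\rm d}s$.

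Concretely, let $\{e_k\}$ be the Dirichlet eigenbasis of $-\Delta$, $\Pi_n$ the orthogonal projection onto $H_n:=\mathrm{span}\{e_1,\ldots,e_n\}$, and solve the finite-dimensional ODE
\begin{equation*}
\dot u_n = \Delta u_n + \Pi_n\bigl(u_n\log|u_n|\bigr) + \Pi_n\!\!\int_{\pmb{E}}\eta(u_n;z)\bigl(g(t,z)-1\bigr)\,m({\rm d}z),\quad u_n(0)=\Pi_n u_0,
\end{equation*}
whose right-hand side is continuous (with the convention $0\log 0=0$), so that Peano yields a local solution. Testing with $u_n$ and invoking an $\epsilon$-adjustable $L^2$ log-Sobolev inequality of the form $\int_D u^2\log|u|\,{\rm d}x\le \epsilon\|\nabla u\|_{L^2(D)}^2 + C_\epsilon\|u\|_{L^2(D)}^2\log(e+\|u\|_{L^2(D)}^2)+C\|u\|_{L^2(D)}^2$, while estimating the control term via \eqref{cond:linear-growth-ldp} and Cauchy-Schwarz as $\|u_n\|_{L^2(D)}(1+\|u_n\|_{L^2(D)}^\theta)\int_{\pmb{E}}h_2(z)|g-1|\,m({\rm d}z)$, the exponential-entropy inequality turns the time integral of the latter factor into a constant depending only on $N$ and $h_2$. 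Absorbing $\epsilon\|\nabla u_n\|_{L^2(D)}^2$ to the left and feeding the remainder into the logarithmic Gronwall inequality produces both the global-in-time existence of $u_n$ and the uniform bound
\begin{equation*}
\sup_n\sup_{g\in S_N}\Bigl\{\sup_{t\in[0,T]}\|u_n(t)\|_{L^2(D)}^2 + \int_0^T\|u_n(t)\|_{W_0^{1,2}(D)}^2\,{\rm d}t\Bigr\}\le C_N,
\end{equation*}
which is exactly \eqref{esti:uni-skeleton} at the Galerkin level and will pass to the limit.

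For the passage to the limit I would bound $\partial_t u_n$ in $L^{1+\delta}([0,T];W^{-1,p}(D))$ term-by-term (the $u_n\log|u_n|$ piece being controlled via interpolation between the $L^\infty_tL^2_x$ and $L^2_tW_0^{1,2}$ bounds just obtained) and invoke Aubin-Lions to extract a subsequence converging weakly in $L^2([0,T];W_0^{1,2}(D))$, weak-$\ast$ in $L^\infty([0,T];L^2(D))$, strongly in $L^2([0,T]\times D)$, and almost everywhere on $[0,T]\times D$ to some $u_g\in C([0,T];L^2(D))\cap L^2([0,T];W_0^{1,2}(D))$. Vitali's theorem, fed by the higher moments arising from the a-priori estimate, transfers $u_n\log|u_n|\to u_g\log|u_g|$ in $L^1([0,T]\times D)$, while the Lipschitz condition \eqref{cond:lipschitz-ldp} and dominated convergence (with dominant $h_1(z)|g(t,z)-1|$ times a uniform-in-$n$ bound on $\|u_n-u_g\|_{L^2(D)}$) dispatch the control term. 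For uniqueness, testing the equation for the difference $w=u^1-u^2$ with $w$ itself, using the pointwise estimate $(u\log|u|-v\log|v|)(u-v)\le C(u-v)^2[1+\log(e+|u|+|v|)]$, log-Sobolev on $w$ to absorb a fraction of $\|\nabla w\|_{L^2(D)}^2$, and \eqref{cond:lipschitz-ldp} on the control difference, would collapse the inequality to $\tfrac{d}{dt}\|w\|_{L^2(D)}^2\le C_g(t)\,\|w\|_{L^2(D)}^2\log(e+\|w\|_{L^2(D)}^2)$ with $\int_0^T C_g(t)\,{\rm d}t<\infty$, and Bihari's inequality together with $w(0)=0$ forces $w\equiv 0$.

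I expect the principal obstacle to be closing the energy estimate in step (ii): because $u\log|u|$ is neither of linear growth, nor locally Lipschitz at the origin, nor locally monotone, none of the classical monotone or locally-monotone SPDE frameworks applies. The rescue will be the precise interplay between the log-Sobolev inequality (which lets $\|\nabla u\|_{L^2(D)}^2$ absorb the dangerous part of $\int u^2\log|u|$, leaving only the tame driver $\|u\|_{L^2(D)}^2\log(e+\|u\|_{L^2(D)}^2)$), the logarithmic Gronwall inequality (which integrates such drivers on $[0,T]$), and the exponential-entropy Young inequality (which converts $L_T(g)\le N$ into a genuine uniform-in-$g$ control on the Poisson integral). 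Exactly the same triple will have to reappear to close the uniqueness estimate.
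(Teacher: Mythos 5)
Your overall strategy---Galerkin approximation, $L^2$-energy estimate closed by the logarithmic Sobolev inequality together with a logarithmic Gronwall lemma, compactness to extract an a.e.\ convergent subsequence, passage to the limit in $u_n\log|u_n|$ by Vitali and in the Poisson control term by Lemma~\ref{lem:identity-eta}-type bounds and $L^2$-contraction for uniqueness---matches the structure of Section~\ref{sec:log-skeleton} in all essentials. Two of your deviations are harmless: you propose to reprove the uniform bound $\sup_{g\in S_N}\int_0^T\int_{\pmb{E}} {\tt h}_i(z)|g(s,z)-1|\,m({\rm d}z)\,{\rm d}s<\infty$ via the exponential--entropy Young inequality, whereas the paper simply cites this as Lemma~\ref{lem:identity-eta} (from Budhiraja et al.); and you invoke Aubin--Lions with a bound on $\partial_t u_n$, whereas the paper uses the fractional--Sobolev bound in $W^{\alpha,2}([0,T];W^{-1,2}(D))$ and Lemma~\ref{lem:cpt} (Flandoli--Gat\c{a}rek). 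Both are standard and interchangeable.

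However, the uniqueness step as you state it contains a genuine gap. After applying log-Sobolev to the difference $w=u^1-u^2$, the inequality you actually obtain is \emph{not} $\tfrac{d}{dt}\|w\|_{L^2(D)}^2\le C_g(t)\,\|w\|_{L^2(D)}^2\log(e+\|w\|_{L^2(D)}^2)$. The estimate for $\big(u^1\log|u^1|-u^2\log|u^2|,\,w\big)$ (Lemma~\ref{lem:result-1}), and likewise for $\int_D|w|^2\log_+(|u^1|\vee|u^2|)\,{\rm d}x$ (Lemma~\ref{lem:result-2}), unavoidably carries a term of the form $\tfrac{1}{(1-\alpha)e}\big(\|u^1\|_{L^2(D)}^{2(1-\alpha)}+\|u^2\|_{L^2(D)}^{2(1-\alpha)}\big)\|w\|_{L^2(D)}^{2\alpha}$ with $\alpha\in(0,1)$. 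It arises from splitting $|w|^2=|w|^{2\alpha}|w|^{2(1-\alpha)}$ and bounding $|w|^{2(1-\alpha)}\log_+(|u^1|\vee|u^2|)$ by $(|u^1|\vee|u^2|)^{2(1-\alpha)}$; one cannot absorb it into a $\|w\|^2\log\|w\|$-type driver. Since, for $\alpha<1$, $\int_{0+}\,{\rm d}s/s^\alpha<\infty$, Bihari/Osgood is simply inapplicable to this sublinear driver and does not force $w\equiv 0$. The argument that actually closes (Subsection~\ref{subsec:uniqueness}, and Subsection~\ref{subsec:uniqueness-skeleton} by reference to it) is different: one keeps $\alpha<1$ fixed, applies the power-type nonlinear Gronwall Lemma~\ref{lem:nonlinear-gronwall} to get an explicit bound of the form $\big\{\cdots R^{1-\alpha}\cdot t^{\alpha}\cdots\big\}^{1/(1-\alpha)}$, observes that on a small interval $[0,T^*]$ the bracket tends to $0$ as $\alpha\uparrow 1$, concludes uniqueness on $[0,T^*]$, and iterates. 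Your proposal should be revised to follow this $\alpha\to 1$/small-time route rather than Bihari.
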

This allow us to define a measurable map $\mathcal{G}^0: \mathbb{S}\ni g\mapsto u_g\in \mathcal{E}_T$. 
\vspace{0.1cm}

We now state the main result regarding the large deviation principle. 
\begin{thm}\label{thm:ldp-log-laplace-levy}
Let the assumptions \ref{A1} and \ref{B1} hold true. Then the family $\{u_\epsilon\}_{\epsilon >0}$ satisfies a large deviation principle
on $\mathcal{E}_T$ with the good rate function ${\tt I}: \mathcal{E}_T\rightarrow [0, + \infty]$ given by
 $$ {\tt I}(\phi):=\inf\Big\{ L_T(g):~~~\text{ $g\in \mathbb{S}$ with $u_g=\phi$}\Big\},\quad \phi\in \mathcal{E}_T$$
 with the convention that $\inf(\emptyset)= \infty$, 
 where for $g\in \mathbb{S}$, $u_g$ is the unique solution of \eqref{eq:log-skeleton}. 
\end{thm}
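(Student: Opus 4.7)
The plan is to verify the improved sufficient conditions of Zhang--2023 for the Laplace principle (equivalently, LDP on the Polish space $\mathcal{E}_T$) in the Poisson-noise setting via the weak convergence method. In our framework these conditions reduce to the following two items:
\textbf{(LDP-1)} for every $N\in\mathbb{N}$, the map $\mathcal{G}^0:(S_N,\tau(S_N))\to\mathcal{E}_T$ is continuous, i.e.\ $g_n\to g$ in $S_N$ implies $u_{g_n}\to u_g$ in $\mathcal{E}_T$;
\textbf{(LDP-2)} whenever $\{\varphi_\epsilon\}\subset\tilde{\mathcal{U}}_N$ converges in distribution to $\varphi$ as $S_N$-valued random elements, the laws of $\tilde{u}_\epsilon:=\mathcal{G}^\epsilon(\epsilon N^{\epsilon^{-1}\varphi_\epsilon})$ on $\mathcal{E}_T$ converge weakly to the law of $\mathcal{G}^0(\varphi)$. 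The use of $\tilde{\mathcal{U}}_N$ (instead of $\mathcal{U}_N$) is precisely what allows the ratio $\varphi_\epsilon$ to be bounded away from $0$ and $\infty$, which is essential because of the non-Lipschitz, non-linearly growing drift.

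For \textbf{(LDP-1)}, fix $g_n\to g$ in $S_N$, set $u_n=u_{g_n}$, $u=u_g$, and $e_n=u_n-u$. Subtracting the two skeleton equations \eqref{eq:log-skeleton} and testing with $e_n$ yields
\[
\tfrac{1}{2}\tfrac{d}{dt}\|e_n\|_{L^2(D)}^2+\|\nabla e_n\|_{L^2(D)}^2=\big\langle u_n\log|u_n|-u\log|u|,\,e_n\big\rangle+R_n(t),
\]
where $R_n(t)$ collects the two controlled drift terms. I would control the logarithmic difference by the elementary bound $|a\log|a|-b\log|b||\le C(1+\log_{+}(|a|\vee|b|))|a-b|$, together with the logarithmic Sobolev inequality on $D$; an $\alpha$-fraction of $\|\nabla e_n\|_{L^2(D)}^2$ is absorbed on the left, leaving a residual term of the form $\|e_n\|_{L^2(D)}^2\log(\mathrm{e}+\|e_n\|_{L^2(D)}^{-2})$. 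The remainder $R_n(t)$ is handled by Assumption \ref{B1}, the uniform bound \eqref{esti:uni-skeleton}, and the variational characterization of the topology on $S_N$. An Osgood/Bihari-type nonlinear Gronwall inequality then forces $\rho_{T^*}^2(u_n,u)\to 0$ on a small interval $[0,T^*]$; using the endpoint $L^2$-bound from \eqref{esti:uni-skeleton} the argument is iterated over finitely many consecutive intervals covering $[0,T]$.

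For \textbf{(LDP-2)}, the Girsanov-type theorem for Poisson random measures identifies $\tilde{u}_\epsilon$ as the strong solution of the controlled SPDE
\begin{equation*}
d\tilde{u}_\epsilon-\Delta\tilde{u}_\epsilon\,dt=\tilde{u}_\epsilon\log|\tilde{u}_\epsilon|\,dt+\epsilon\!\int_{\pmb{E}}\!\eta(\tilde{u}_\epsilon;z)\,\widetilde{N}^{\epsilon^{-1}\varphi_\epsilon}(dz,dt)+\!\int_{\pmb{E}}\!\eta(\tilde{u}_\epsilon;z)(\varphi_\epsilon-1)\,m(dz)\,dt.
\end{equation*}
Applying It\^o's formula to $\|\tilde{u}_\epsilon\|_{L^2(D)}^2$, using BDG together with Remark \ref{rem:assumption}, Remark \ref{rem:regarding-condition} and the bound $\varphi_\epsilon\in S_N$, and absorbing $\int_D\tilde{u}_\epsilon^2\log|\tilde{u}_\epsilon|\,dx$ via the log-Sobolev inequality, I would derive $\epsilon$-uniform estimates for $\tilde{u}_\epsilon$ in $L^p\!\big(\Omega;L^\infty([0,T^*];L^2(D))\cap L^2([0,T^*];W_0^{1,2}(D))\big)$. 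These estimates, combined with the Aldous tightness criterion (Definition \ref{defi:aldous-condition}), give tightness of $\mathcal{L}(\tilde{u}_\epsilon,\varphi_\epsilon,\epsilon N^{\epsilon^{-1}\varphi_\epsilon})$ on $\mathcal{E}_{T^*}\times S_N\times{\tt M}_{T^*}$. Invoking Jakubowski's version of the Skorokhod theorem I pass to a new probability space on which a subsequence converges almost surely; the vanishing of the $\epsilon$-compensated integral, the continuity in $\varphi$ of the controlled drift, and the same pointwise inequality used in (LDP-1) for the log nonlinearity then allow me to identify the limit as a solution of \eqref{eq:log-skeleton} driven by $\varphi^*$. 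Uniqueness from Theorem \ref{thm:skeleton} yields $\lim\tilde{u}_\epsilon=\mathcal{G}^0(\varphi^*)$ in law on $\mathcal{E}_{T^*}$; an induction on consecutive intervals of length $T^*$, restarted at each endpoint using the moment bound above, delivers the statement on the full interval $[0,T]$.

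The \textbf{main obstacle} is \textbf{(LDP-2)}: one must simultaneously pass to the limit in the compensated stochastic integral against the randomly rescaled intensity $\epsilon^{-1}\varphi_\epsilon$ and in the drift $\tilde{u}_\epsilon\log|\tilde{u}_\epsilon|$, which is neither Lipschitz nor of linear growth. The log-Sobolev inequality and Bihari--Osgood nonlinear Gronwall are what unlock the continuity/stability estimates, but the constants they produce grow exponentially with the time horizon, so the short-time/induction decomposition is indispensable in order to extend the argument from $[0,T^*]$ to $[0,T]$.
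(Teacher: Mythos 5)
Your global plan (verify a two-item sufficient criterion for the Laplace/LDP principle via the weak-convergence method) matches the paper, but there are two substantive points where your proposal departs from what would actually make the argument close.

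\textbf{Continuity of $\mathcal{G}^0$ (your (LDP-1)).} Your plan is to take the difference $e_n=u_{g_n}-u_g$, test the difference of the skeleton equations with $e_n$, and close a Bihari/Osgood-type inequality directly, invoking ``the variational characterization of the topology on $S_N$'' to kill the remainder. This is where a real gap opens. Decomposing the controlled drift difference, one term is $\int_0^T\int_{\pmb{E}}\langle\eta(u_g;z)(g_n(t,z)-g(t,z)),e_n(t)\rangle\,m({\rm d}z)\,{\rm d}t$, and after Cauchy--Schwartz and the growth bound the best a priori control is by $C\int_0^T\int_{\pmb{E}}\mathtt{h}_2(z)\,|g_n(t,z)-g(t,z)|\,m({\rm d}z)\,{\rm d}t$. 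But $g_n\to g$ in $S_N$ is only a weak (measure) convergence, so this absolute-value integral does \emph{not} tend to zero in general, and Lemma \ref{lem:technical-control-1} does not apply because of the modulus. The paper's route is therefore not cosmetic: one must first derive $n$-uniform a-priori bounds (the analogue of Lemma \ref{lem:apriori-skeleton-galerkin}), use Lemma \ref{lem:cpt} to extract a subsequence converging strongly in $L^2([0,T];L^2(D))$, identify the limit as $u_g$ via uniqueness, and \emph{only then} bootstrap to $C([0,T];L^2)\cap L^2([0,T];W_0^{1,2})$ convergence by the Gronwall argument, where the key input is Remark \ref{rem:converence-related} (which presupposes the strong $L^2$-convergence just obtained). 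Your proposal skips the compactness/identification step entirely, so the direct Gronwall cannot close.

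\textbf{Convergence of the controlled solutions (your (LDP-2)).} You announce Zhang-2023's improved criterion, but what you actually state and attempt to prove is the older Budhiraja--Dupuis condition: $\varphi_\epsilon\Rightarrow\varphi$ in distribution, tightness of $\tilde{u}_\epsilon$, passage to a Skorokhod representation, and identification of the limit with $\mathcal{G}^0(\varphi)$. The paper uses the genuinely improved condition, namely $\rho_T\big(\mathcal{G}^\epsilon(\epsilon N^{\epsilon^{-1}\varphi_\epsilon}),\mathcal{G}^0(\nu_T^{\varphi_\epsilon})\big)\to 0$ in probability: it compares the stochastic controlled solution with the skeleton solution driven by the \emph{same} random control and bypasses tightness and Skorokhod entirely, which is precisely what makes the logarithmic drift tractable. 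Your route is not wrong in principle, but it is substantially heavier; in particular you would have to establish tightness of $\tilde{u}_\epsilon$ on $\mathcal{E}_T$ in the Skorokhod topology with a random intensity $\epsilon^{-1}\varphi_\epsilon$, and then pass to the limit in $\tilde{u}_\epsilon\log|\tilde{u}_\epsilon|$, which again requires a strong $L^2$-mode of convergence before any pointwise argument can start. The paper avoids all this by running the same short-time Bihari/Osgood estimate as for pathwise uniqueness, truncating with stopping times controlled by \eqref{esti:apriori-for-cond-c1} and \eqref{esti:apriori-for-cond-c1-control}, sending $\alpha\to 1$ on a small interval $[0,T^*]$, and iterating. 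I would recommend adopting this direct in-probability comparison; your induction-over-intervals idea is exactly what the paper uses, but the estimate being iterated is the comparison $\rho_{T^*}(\tilde{u}_\epsilon,u_{\varphi_\epsilon})$, not a tightness/Skorokhod limit.
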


\section{Well-posedness theory for strong solution of \eqref{eq:log-nonlinear}}\label{sec:existence-weak-solu}

In this section, we establish the well-posedness of equation \eqref{eq:log-nonlinear}.  In order to achieve this, we initially use the  Galerkin methods to construct approximate solutions. Following this, we derive its necessary {\it a-priori} bounds. Firstly, we show the existence of a martingale solution and then prove its global solution.

\subsection{Galerkin approximating solutions}
Let $\{e_n\}$ be an orthogonal basis of $V:=W_0^{1,2}(D)$ and orthonormal basis of $L^2(D)$ consists of the eigen-functions of $-\Delta$ operator with a Dirichlet boundary condition corresponding to the eigenvalue $\lambda_n$. Note that for each $n\in \mathbb{N}$, $e_n\in L^\infty(D)$. Let $L_{n}$ denote the $n$-dimensional subspace of $L^2(D)$ spanned by $\left\{e_1, \ldots, e_n\right\}$. Let $P_n: V^* \rightarrow L_n$ be the projection operator defined by
\begin{align} \label{def-projection}
    P_n h:=\sum_{j=1}^n \left\langle h, e_j\right\rangle e_j\,, \quad h\in V^*\,.
\end{align}
For any  $n \in \mathbb{N}$, we consider the following stochastic differential equation in $L_{n}$:
\begin{align} 
    \begin{cases}
   \displaystyle {\rm d}u_n(t)=\Delta u_n(t) {\rm d}t + P_n\left[u_n(t) \log \left|u_n(t)\right|\right] {\rm d}t+  \int_{\pmb{E}} P_n [\eta(u_n;z)] \widetilde{N}({\rm d}z,{\rm d}t), \quad t> 0\,, \\
u_n(0)=P_n u_0\,,
\end{cases} \label{eq:finite-dim}
\end{align}
such that
\begin{align} \label{eq:finite-dim-def} 
   u_n(t)=\sum_{j=1}^n h_{j n}(t) e_j \,.
\end{align}
Note that $u_n$ solves \eqref{eq:finite-dim} if and only if $\left\{h_{j n}\right\}_{j=1}^n$ solves the system of SDE
\begin{align} \label{eq:galerkin}
{\rm d}h_{i n}(t)
&=  -\lambda_i h_{i n}(t) \,{\rm d}t+\left(\sum_{j=1}^n h_{j n}(t) e_j \log \left|\sum_{j=1}^n h_{j n}(t) e_j\right|, e_i\right) \,{\rm d}t  \notag \\
& + \int_{\pmb{E}} \left(\eta \bigg(\sum_{j=1}^n h_{j n}(t) e_j ;z\bigg), e_i\right) \widetilde{N}({\rm d}z,{\rm d}t) , \quad i=1,2, \ldots, n\,.
\end{align}
To present the existence and uniqueness results for the system \eqref{eq:galerkin}, we introduce the following functions $F_i$ and $H_i, i=1, \ldots, n$, on $\mathbb{R}^n$:
\begin{align*}
F_i\left(y_1, \ldots, y_n\right) & :=\displaystyle \int_D e_i(x)\left(\sum_{j=1}^n y_j e_j(x)\right) \log \left|\sum_{j=1}^n y_j e_j(x)\right| \,{\rm d}x\,, \\
H_i\left(y_1, \ldots, y_n;z\right) & :=\int_D e_i(x)\,   \eta \bigg(\sum_{j=1}^n y_j e_j(x); z \bigg)  \,{\rm d}x\,.
\end{align*}
To proceed further, we first recall the following estimates for the difference between two logarithmic terms. For its proof, we refer to see \cite[Lemmas $3.1$ $\&$ $3.2$]{Shang-2022}.
\begin{lem} \label{lem:result-1}
     For any $\xi, \zeta \in V, \epsilon>0$, and $\alpha \in(0,1)$, we have
\begin{align*}
   & (\xi \log |\xi|-\zeta \log |\zeta|, \xi-\zeta) \\
& \le \epsilon \, \|\xi-\zeta\|_{W_0^{1,2}(D)}^2+\left(1+\frac{d}{4} \log \frac{1}{\epsilon}\right)\|\xi-\zeta\|_{L^2(D)}^2 +\|\xi-\zeta\|_{L^2(D)}^2 \log \|\xi-\zeta\|_{L^2(D)} \\
&\quad +\frac{1}{2(1-\alpha) \mathrm{e}}\left(\|\xi\|_{L^2(D)}^{2(1-\alpha)}+\|\zeta\|_{L^2(D)}^{2(1-\alpha)}\right)\|\xi-\zeta\|_{L^2(D)}^{2 \alpha}\,. 
\end{align*}
 \end{lem}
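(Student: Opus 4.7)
My plan is to reduce the inequality to a pointwise bound combined with the logarithmic Sobolev inequality (in Gross form) and a single application of Hölder's inequality.

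First, I would establish the pointwise estimate that for every $\xi,\zeta\in\R$ and $\alpha\in(0,1)$,
\begin{align*}
(\xi\log|\xi|-\zeta\log|\zeta|)(\xi-\zeta) \le (\xi-\zeta)^{2}+(\xi-\zeta)^{2}\log|\xi-\zeta|+\tfrac{1}{2(1-\alpha)\mathrm{e}}\big(|\xi|^{2(1-\alpha)}+|\zeta|^{2(1-\alpha)}\big)|\xi-\zeta|^{2\alpha}.
\end{align*}
To get this, I write $\phi(s)=s\log|s|$ and use the mean-value representation $\phi(\xi)-\phi(\zeta)=(\xi-\zeta)\int_{0}^{1}(1+\log|\zeta+t(\xi-\zeta)|)\,dt$, which reduces the problem to bounding $(\xi-\zeta)^{2}\int_{0}^{1}\log|\zeta+t(\xi-\zeta)|\,dt$. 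I split into the region where $|\zeta+t(\xi-\zeta)|\le|\xi-\zeta|$ (where the integrand is dominated by $\log|\xi-\zeta|$) and its complement, where I use the elementary bound $\log|\zeta+t(\xi-\zeta)|\le\log(|\xi|\vee|\zeta|)$; the latter is converted to the $\alpha$-Hölder-like correction by writing $\log r\le\frac{1}{(1-\alpha)\mathrm{e}}r^{1-\alpha}/r$ for $r>0$ (i.e.\ $x\log(1/x)\le\frac{1}{(1-\alpha)\mathrm{e}}x^{\alpha}$), which produces the factor $\frac{1}{2(1-\alpha)\mathrm{e}}$ and the mixed power $|\xi|^{2(1-\alpha)}|\xi-\zeta|^{2\alpha}$ (and symmetrically in $\zeta$).

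Next, integrating the pointwise inequality over $D$ gives
\begin{align*}
(\xi\log|\xi|-\zeta\log|\zeta|,\xi-\zeta)\le \|\xi-\zeta\|_{L^{2}(D)}^{2}+\int_{D}(\xi-\zeta)^{2}\log|\xi-\zeta|\,dx + \tfrac{1}{2(1-\alpha)\mathrm{e}}\int_{D}\big(|\xi|^{2(1-\alpha)}+|\zeta|^{2(1-\alpha)}\big)|\xi-\zeta|^{2\alpha}\,dx.
\end{align*}
For the last integral I apply Hölder's inequality with conjugate exponents $\frac{1}{1-\alpha}$ and $\frac{1}{\alpha}$, which gives exactly $\big(\|\xi\|_{L^{2}(D)}^{2(1-\alpha)}+\|\zeta\|_{L^{2}(D)}^{2(1-\alpha)}\big)\|\xi-\zeta\|_{L^{2}(D)}^{2\alpha}$. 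For the middle integral I invoke the logarithmic Sobolev inequality of Gross form on $W_{0}^{1,2}(D)$: for $u\in W_{0}^{1,2}(D)$ and any $\epsilon>0$,
\begin{align*}
\int_{D}u^{2}\log|u|\,dx \le \epsilon\,\|u\|_{W_{0}^{1,2}(D)}^{2}+\tfrac{d}{4}\log(1/\epsilon)\,\|u\|_{L^{2}(D)}^{2}+\|u\|_{L^{2}(D)}^{2}\log\|u\|_{L^{2}(D)},
\end{align*}
applied to $u=\xi-\zeta$. Combining these yields the claimed bound.

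The main obstacle is step one: obtaining the pointwise inequality with precisely the constant $\frac{1}{2(1-\alpha)\mathrm{e}}$ and the right power structure. The delicate case is when one of $\xi,\zeta$ is close to zero while $|\xi-\zeta|$ is comparable to $|\xi|\vee|\zeta|$, where $\log|\zeta+t(\xi-\zeta)|$ is very negative on part of the unit interval; the Hölder-type splitting via $x\log(1/x)\le\frac{1}{(1-\alpha)\mathrm{e}}x^{\alpha}$ (the maximizer being $x=\mathrm{e}^{-1/(1-\alpha)}$, giving the constant $\frac{1}{(1-\alpha)\mathrm{e}}$) is the key device that converts the log-singularity into an $\alpha$-power singularity, which in turn becomes integrable against the $L^{2}$ norms of $\xi,\zeta$ via Hölder. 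Everything else (integration and log-Sobolev) is standard, so essentially all of the work is concentrated in this single pointwise estimate.
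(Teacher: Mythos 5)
The paper does not present its own proof of this lemma; it cites \cite[Lemmas 3.1 \& 3.2]{Shang-2022}. Your derivation is correct, and given that it reproduces the exact constant $\tfrac{1}{2(1-\alpha)\mathrm{e}}$ and the power structure $2(1-\alpha)$/$2\alpha$, it almost certainly coincides with the referenced argument: a pointwise bound via the mean-value representation of $s\mapsto s\log|s|$, then H\"older with exponents $(\tfrac{1}{1-\alpha},\tfrac{1}{\alpha})$ on the $\alpha$-weighted term and the logarithmic Sobolev inequality \eqref{inq:log-sov-1} on the $(\xi-\zeta)^2\log|\xi-\zeta|$ term. Two small fixes in the pointwise step. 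First, the displayed scalar inequality contains a stray ``$/r$'': it should read $\log r\le\tfrac{1}{(1-\alpha)\mathrm{e}}\,r^{1-\alpha}$, whereas $\log r\le\tfrac{1}{(1-\alpha)\mathrm{e}}\,r^{1-\alpha}/r=\tfrac{1}{(1-\alpha)\mathrm{e}}\,r^{-\alpha}$ is false (it fails at $r=\mathrm{e}^{1/(1-\alpha)}$); the parenthetical form $x\log(1/x)\le\tfrac{1}{(1-\alpha)\mathrm{e}}x^{\alpha}$ is the correct one. Second, to arrive at $\tfrac{1}{2(1-\alpha)\mathrm{e}}$ and the exponents $2(1-\alpha)$, $2\alpha$, this must be applied with $x=|\xi-\zeta|^{2}/(|\xi|\vee|\zeta|)^{2}$ (the \emph{squared} ratio), so that $x\log(1/x)=2\,\tfrac{|\xi-\zeta|^{2}}{(|\xi|\vee|\zeta|)^{2}}\log\tfrac{|\xi|\vee|\zeta|}{|\xi-\zeta|}$ yields both the factor $\tfrac12$ and the right powers; applying it to $r=(|\xi|\vee|\zeta|)/|\xi-\zeta|$ directly produces the mismatched pair $(1-\alpha,\,1+\alpha)$ with no $\tfrac12$. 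Finally, the region decomposition of $[0,1]$ is optional: since $|\zeta+t(\xi-\zeta)|\le|\xi|\vee|\zeta|$ for all $t\in[0,1]$, one has $\int_0^1\log|\zeta+t(\xi-\zeta)|\,\mathrm{d}t\le\log(|\xi|\vee|\zeta|)\le\log|\xi-\zeta|+\log_+\tfrac{|\xi|\vee|\zeta|}{|\xi-\zeta|}$ directly (the last inequality holding trivially when $|\xi-\zeta|>|\xi|\vee|\zeta|$). With these adjustments the pointwise bound, the H\"older step using $(|\xi|\vee|\zeta|)^{2(1-\alpha)}\le|\xi|^{2(1-\alpha)}+|\zeta|^{2(1-\alpha)}$, and the log-Sobolev application all combine exactly as you describe.
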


\begin{lem} \label{lem:result-2}
     For any $\xi, \zeta \in V, \epsilon>0$, and $\alpha \in(0,1)$, we have
\begin{align*}
& \int_D|\xi(x)-\zeta(x)|^2 \log _{+}(|\xi(x)| \vee |\zeta(x)|) \, {\rm d}x \\
& \le \epsilon \, \|\xi-\zeta\|_{W_0^{1,2}(D)}^2+\left(\frac{d}{4} \log \frac{1}{\epsilon}\right)\|\xi-\zeta\|_{L^2(D)}^2+\|\xi-\zeta\|_{L^2(D)}^2 \log \|\xi-\zeta\|_{L^2(D)} \\
&\quad  +\frac{1}{2(1-\alpha) \mathrm{e}}\left(\|\xi\|_{L^2(D)}^{2(1-\alpha)}+\|\zeta\|_{L^2(D)}^{2(1-\alpha)}\right)\|\xi-\zeta\|_{L^2(D)}^{2 \alpha} \\
& \qquad +\frac{1}{2(1-\alpha) \mathrm{e}}(4 \lambda(D))^{1-\alpha}\|\xi-\zeta\|_{L^2(D)}^{2 \alpha}\,,
\end{align*}
where $\lambda(D)$ is the Lebesgue measure of the domain $D$.
\end{lem}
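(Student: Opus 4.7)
The plan is to establish a pointwise estimate for the integrand $|w|^{2}\log_{+}(|\xi|\vee|\zeta|)$, where $w := \xi-\zeta$, and then to invoke the classical logarithmic Sobolev inequality on $W_{0}^{1,2}(D)$ together with H\"older's inequality. The first step is purely algebraic: for any $a \geq 0$ and $b > 0$, write $b^{2}\log(1\vee a) = b^{2}\log b + b^{2}\log((1\vee a)/b)$, and observe that the elementary bound $\log_{+}y \leq \frac{y^{2(1-\alpha)}}{2(1-\alpha)\mathrm{e}}$ for $y>0$ and $\alpha\in(0,1)$ (which follows at once from $\log z \leq z/\mathrm{e}$ applied at $z = y^{2(1-\alpha)}$) together with $(1\vee a)^{2(1-\alpha)} \leq 1 + a^{2(1-\alpha)}$ gives
\begin{equation*}
b^{2}\log_{+}a \;\leq\; b^{2}\log b \;+\; \frac{(1\vee a)^{2(1-\alpha)}\,b^{2\alpha}}{2(1-\alpha)\mathrm{e}} \;\leq\; b^{2}\log b \;+\; \frac{\bigl(1 + a^{2(1-\alpha)}\bigr)b^{2\alpha}}{2(1-\alpha)\mathrm{e}}.
\end{equation*}

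Substituting $a = |\xi(x)|\vee|\zeta(x)|$ and $b = |\xi(x)-\zeta(x)|$ and integrating over $D$ yields
\begin{equation*}
\int_{D} w^{2}\log_{+}(|\xi|\vee|\zeta|)\,{\rm d}x \;\leq\; \int_{D} w^{2}\log|w|\,{\rm d}x + \frac{1}{2(1-\alpha)\mathrm{e}}\int_{D}\bigl(1 + (|\xi|\vee|\zeta|)^{2(1-\alpha)}\bigr)|w|^{2\alpha}\,{\rm d}x.
\end{equation*}
To the first summand on the right I apply the classical logarithmic Sobolev inequality on $W_{0}^{1,2}(D)$,
\begin{equation*}
\int_{D} w^{2}\log|w|\,{\rm d}x \;\leq\; \epsilon\,\|w\|_{W_{0}^{1,2}(D)}^{2} + \frac{d}{4}\log\frac{1}{\epsilon}\,\|w\|_{L^{2}(D)}^{2} + \|w\|_{L^{2}(D)}^{2}\log\|w\|_{L^{2}(D)},
\end{equation*}
which reproduces exactly the first three terms on the right-hand side of the statement.

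The remaining power integrals are handled by H\"older's inequality: $\int_{D}|w|^{2\alpha}\,{\rm d}x \leq \lambda(D)^{1-\alpha}\|w\|_{L^{2}(D)}^{2\alpha}$, while the monotonicity bound $(|\xi|\vee|\zeta|)^{2(1-\alpha)} \leq |\xi|^{2(1-\alpha)} + |\zeta|^{2(1-\alpha)}$ combined with H\"older with conjugate exponents $1/(1-\alpha)$ and $1/\alpha$ gives
\begin{equation*}
\int_{D}|\xi|^{2(1-\alpha)}|w|^{2\alpha}\,{\rm d}x \;\leq\; \|\xi\|_{L^{2}(D)}^{2(1-\alpha)}\|w\|_{L^{2}(D)}^{2\alpha},
\end{equation*}
and analogously for $\zeta$. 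Collecting every estimate produces the claim, with the naturally appearing constant $\lambda(D)^{1-\alpha}$ absorbed into the slightly more generous $(4\lambda(D))^{1-\alpha}$ appearing in the statement.

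I do not anticipate any genuine obstacle in the individual steps, since each rests on a standard tool. The only nontrivial design choice is the exponent $\beta = 2(1-\alpha)$ in the intermediate bound $\log y \leq y^{\beta}/(\beta\mathrm{e})$: this particular choice is forced precisely so that the concluding H\"older application returns the homogeneous product $\|\xi\|_{L^{2}(D)}^{2(1-\alpha)}\|w\|_{L^{2}(D)}^{2\alpha}$ with matching exponents, mirroring the corresponding term in Lemma~\ref{lem:result-1}.
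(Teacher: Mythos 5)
Your proof is correct, and it is self-contained, which is more than the paper offers: the paper simply cites \cite[Lemmas 3.1 \& 3.2]{Shang-2022} without reproducing the argument, so there is no "paper proof" to match against. Your decomposition is the natural one --- writing $b^{2}\log_{+}a = b^{2}\log b + b^{2}\log\bigl((1\vee a)/b\bigr)$ for $b=|\xi-\zeta|>0$, absorbing $b^{2}\log b$ via the logarithmic Sobolev inequality, and taming the remainder with the elementary estimate $\log_{+}y \le y^{2(1-\alpha)}/\bigl(2(1-\alpha)\mathrm{e}\bigr)$ followed by H\"older. Every step checks: the identity requires $b>0$ and the $b=0$ set contributes nothing to the integral; $\log z \le z/\mathrm{e}$ gives the power bound; $(1\vee a)^{2(1-\alpha)}\le 1 + a^{2(1-\alpha)}$ and $(|\xi|\vee|\zeta|)^{2(1-\alpha)}\le |\xi|^{2(1-\alpha)}+|\zeta|^{2(1-\alpha)}$ are elementary; and the two H\"older applications with exponents $\frac{1}{1-\alpha},\frac{1}{\alpha}$ are legitimate. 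In fact your argument produces the constant $\lambda(D)^{1-\alpha}$, which is strictly smaller than the $(4\lambda(D))^{1-\alpha}$ appearing in the statement (inherited from the cited reference), so what you prove is a marginally sharper inequality that implies the stated one.
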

For any vector $v=\left(v_1, \ldots, v_n\right) \in \mathbb{R}^n$, we denote the length of $v$ as $|v|$. 
We provide some essential estimates for the functions $F_i$ and $G_i,~i=1, \ldots, n$.

\begin{lem} \label{est:finite-log}
Under the assumptions \ref{A1}-\ref{A3}, the following estimates hold. 
    \begin{itemize}
        \item [(i)] There exist non-negative constants $\widetilde{K_1}, \widetilde{K_2}, \widetilde{K_3}$ and $\delta>0$ such that for any $v, w \in \mathbb{R}^n$ with $|v-w| \leq \delta$,
        \begin{align*}
            & \left|F_i\left(v_1, \ldots, v_n\right)-F_i\left(w_1, \ldots, w_n\right)\right| \notag \\
\leq & \widetilde{K_1}|v-w|+\widetilde{K_2}|v-w| \log _{+}(|v| \vee|w|)+\widetilde{K_3}|v-w| \log \frac{1}{|v-w|}\,.
        \end{align*}
        \item [(ii)] There exist  constants $\widetilde{C_1}, \widetilde{C_2} \ge 0$ such that for any $v \in \mathbb{R}^n$,
        \begin{align*}
            & \left|F_i\left(v_1, \ldots, v_n\right)\right| 
\leq  \widetilde{C_1} + \widetilde{C_2}|v| \log _{+}|v|\,.
        \end{align*}
        \item [(iii)] There exist non-negative constants $\widetilde{K_4}, \widetilde{K_5}$ such that for any $v, w \in \mathbb{R}^n$,
        \begin{align*}
            & \int_{\pmb{E}} \left|H_i\left(v_1, \ldots, v_n;z\right)-H_i\left(w_1, \ldots, w_n;z\right)\right|^2 \,m({\rm d}z) \notag \\
\leq & \widetilde{K_4}|v-w|^2+\widetilde{K_5}|v-w| \log _{+}(|v| \vee|w|)\,.
\end{align*}
\item [(iv)] There exist constants $\widetilde{C_3}, \widetilde{C_4}\ge 0$ such that for any $v \in \mathbb{R}^n$,
        \begin{align*}
            & \int_{\pmb{E}} \left|H_i\left(v_1, \ldots, v_n;z\right)\right|^2 \,m({\rm d}z)
\leq  \widetilde{C_3} + \widetilde{C_4}|v|^2 (\log _{+}|v|)\,.
        \end{align*}
    \end{itemize}
\begin{proof}
    The assertions {\rm (i)} and {\rm (ii)} follow from \cite[Lemma 4.1]{Shang-2022}. To prove {\rm (iii)}, we first set
    \begin{align*}
        y_1(x) := \sum_{j=1}^n v_j e_j(x), \quad y_2(x) := \sum_{j=1}^n w_j e_j(x).
    \end{align*}
    Note that
    \begin{align} \label{eq:est-y1}
        |y_1(x)| = \left | \sum_{j=1}^n v_j e_j(x) \right | \le |v| \left ( \sum_{j=1}^n  \|e_j\|_{L^{\infty}(D)}^2 \right )^\frac{1}{2} \le (|v| \vee |w|) \left ( \sum_{j=1}^n  \|e_j\|_{L^{\infty}(D)}^2 \right )^\frac{1}{2}\,.
    \end{align} 
    In view of the assumption \ref{A2}, we get
    \begin{align}
       \int_{\pmb{E}} & |H_i\left(v_1, \ldots, v_n;z \right)-H_i\left(w_1, \ldots, w_n;z\right)|^2 \,m({\rm d}z) \notag \\
       & = \int_{\pmb{E}} \left|\int_{D}^{} e_i(x) \bigg(\eta(y_1(x);z) - \eta(y_2(x);z) \bigg) {\rm d}x \right|^2 \,m({\rm d}z) \notag \\
       & \le C \int_{D} e_i^2(x) \left[\int_{\pmb{E}} \left| \eta(y_1(x);z) - \eta(y_2(x);z) \right|^2 \,m({\rm d}z) \right] {\rm d}x  \notag \\
        & \le C \|e_i \|_{L^\infty(D)}^2  \int_{D} \int_{\pmb{E}}\Big\{ 2\,K_1^{2} |y_1-y_2|^2 + 2\,K_2^{2} |y_1-y_2|^2 \log _{+} (|y_1| \vee |y_2|)\Big\}  {\tt h}^2(z) \,m({\rm d}z) {\rm d}x  \notag \\
        & \le C \|e_i \|_{L^\infty(D)}^2 \left[ \int_{D}^{} |y_1-y_2|^2 \, {\rm d}x + \int_{D}^{} |y_1-y_2|^2 \big(\log _{+} (|y_1| \vee |y_2|) \big)\, {\rm d}x \right]\,. \notag
    \end{align}
   By using \eqref{eq:est-y1} and using similar argument~(with the cosmetic modifications) as in the proof of \cite[Lemma $4.1$, pp. 97-99]{Shang-2022} to conclude the assertion ${\rm (iii)}$. The proof of assertion {\rm (iv)} is similar to the proof of assertion {\rm (iii)}.
\end{proof}
\end{lem}
Thanks to Lemma \ref{est:finite-log}, we can directly utilize Theorems $2.2$ and $2.4$ of \cite{Fubao-2019}~(see also \cite{Yuchao-2018,Situ-2005}) to obtain the following theorem.
\begin{thm}
    Under \ref{A1}-\ref{A3}, the equation \eqref{eq:finite-dim} admits a unique strong solution.
\end{thm}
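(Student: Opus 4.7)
The plan is essentially to invoke the existence and uniqueness theory for finite-dimensional SDEs with non-Lipschitz coefficients driven by a Poisson random measure, as developed in the cited works of Situ and Ren--Wu. The key point is that the four assertions established in Lemma \ref{est:finite-log} are designed to match exactly the hypotheses of those theorems, so the proof reduces to a verification plus citation.

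First, I would rewrite \eqref{eq:galerkin} as an $\mathbb{R}^n$-valued SDE
\begin{equation*}
{\rm d}H_n(t) = b(H_n(t))\,{\rm d}t + \int_{\pmb{E}} \sigma(H_n(t-);z)\,\widetilde{N}({\rm d}z,{\rm d}t),\qquad H_n(0)=(\langle u_0,e_1\rangle,\dots,\langle u_0,e_n\rangle),
\end{equation*}
where $H_n=(h_{1n},\ldots,h_{nn})$, the $i$-th component of $b$ is $-\lambda_i y_i + F_i(y)$, and the $i$-th component of $\sigma(\cdot;z)$ is $H_i(\cdot;z)$. The linear term $-\lambda_i y_i$ is Lipschitz and globally sublinear, so it is harmless.

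Second, I would verify that $b$ and $\sigma$ fit a non-Lipschitz existence-uniqueness framework. For uniqueness, one needs a modulus-of-continuity condition of Yamada--Watanabe/Bihari type: on some neighbourhood $|v-w|\le \delta$,
\begin{equation*}
|b(v)-b(w)|+\Big(\int_{\pmb{E}}|\sigma(v;z)-\sigma(w;z)|^2\,m({\rm d}z)\Big)^{1/2}\le \rho(|v-w|),
\end{equation*}
where $\rho(r)$ is a concave, non-decreasing function with $\rho(0)=0$ and $\int_{0+}\rho^{-2}(r)\,dr=\infty$ (Osgood condition). Lemma \ref{est:finite-log}\,(i),(iii) gives exactly a bound of the form $C|v-w|\big(1+\log_+(|v|\vee|w|)\big)+C|v-w|\log\tfrac{1}{|v-w|}$, and after fixing the level $|v|\vee|w|\le R$, the controlling modulus reduces to $r\mapsto Cr(1+\log\tfrac{1}{r})$, which is Osgood. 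For existence and non-explosion, one needs a linear-logarithmic growth estimate; Lemma \ref{est:finite-log}\,(ii),(iv) provides
\begin{equation*}
|b(v)|^2 + \int_{\pmb{E}}|\sigma(v;z)|^2\,m({\rm d}z)\le C\big(1+|v|^2(1+\log_+|v|)\big),
\end{equation*}
which prevents finite-time blow-up by a standard Gronwall-with-log argument (or equivalently a comparison with the ODE $\dot{r}=Cr(1+\log_+ r)$).

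With these two ingredients in place I would invoke \cite[Theorems 2.2 and 2.4]{Fubao-2019} (or the parallel statements in \cite{Yuchao-2018,Situ-2005}), which assert that under precisely such log-Lipschitz continuity and log-linear growth conditions, the SDE admits a pathwise-unique non-exploding strong solution $H_n\in \mathbb{D}([0,\infty);\mathbb{R}^n)$ adapted to $\{\mathcal{F}_t\}$. Finally, reconstructing $u_n(t)=\sum_{j=1}^n h_{jn}(t)e_j$ yields the unique strong solution of \eqref{eq:finite-dim} in $L_n$. I do not expect any genuine obstacle here — the entire difficulty has been packaged into Lemma \ref{est:finite-log}, and the rest is a direct citation of classical SDE theory with log-type coefficients.
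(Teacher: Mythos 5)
Your proposal is correct and follows essentially the same route as the paper: the paper's proof is a one-line citation stating that Lemma \ref{est:finite-log} allows direct application of Theorems 2.2 and 2.4 of \cite{Fubao-2019} (with \cite{Yuchao-2018,Situ-2005} as alternative references). You merely unpack what that citation relies on---the log-Lipschitz modulus from parts (i),(iii) for pathwise uniqueness and the log-linear growth from parts (ii),(iv) for non-explosion---which is exactly the verification the paper leaves implicit.
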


\subsection{\bf {\it A-priori} estimates} In this subsection, we wish to derive the necessary uniform bounds for $\{u_n\}$. To begin with, we state the following logarithmic Sobolev inequality; see \cite{Shang-2022}.
\begin{lem}[Logarithmic Sobolev inequality]
For any $\eps>0$ and $u\in W_0^{1,2}(D),$ the following inequalities hold:
\begin{align}
& {i)}~~\int_{D} |u(x)|^2 \log(|u(x)|)\,{\rm d}x \le \eps \|u\|_{W_0^{1,2}(D)}^2 + \frac{d}{4} \log(\frac{1}{\eps})\|u\|_{L^2(D)}^2 + \|u\|_{L^2(D)}^2 \log(\|u\|_{L^2(D)})\,.\label{inq:log-sov-1} \\
& {ii)}~~\int_{D} |u(x)|^2 \log_{+}(|u(x)|)\,{\rm d}x \le \eps \|u\|_{W_0^{1,2}(D)}^2 + \frac{d}{4} \log(\frac{1}{\eps})\|u\|_{L^2(D)}^2 + \|u\|_{L^2(D)}^2 \log(\|u\|_{L^2(D)}) + \frac{1}{2e} \lambda(D)\,. \notag
\end{align}
\end{lem}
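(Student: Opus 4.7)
The plan is to deduce both inequalities from the classical (Gross--type) Euclidean logarithmic Sobolev inequality
$$\int_{\rd} v^2 \log(v^2)\,{\rm d}x \le 2\eps\, \|\nabla v\|_{L^2(\rd)}^2 + \frac{d}{2}\log(1/\eps),\qquad v\in W^{1,2}(\rd),~\|v\|_{L^2(\rd)}=1,\ \eps>0,$$
which is a standard consequence of Gross's log-Sobolev inequality for the Gaussian measure, combined with the scaling $v\mapsto \lambda^{d/2}v(\lambda\,\cdot)$ to introduce the free parameter $\eps$.

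\textbf{Step 1 (assertion (i)).} Assuming $u\not\equiv 0$ (the case $u\equiv 0$ is trivial) and extending $u$ by zero from $D$ to $\rd$, set $v:=u/\|u\|_{L^2(D)}$ so that $\|v\|_{L^2(\rd)}=1$. Splitting $\log|u|=\log|v|+\log\|u\|_{L^2(D)}$ and multiplying by $|u|^2$ before integrating gives
$$\int_D |u|^2 \log|u|\,{\rm d}x = \|u\|_{L^2(D)}^2\int_{\rd} v^2 \log|v|\,{\rm d}x + \|u\|_{L^2(D)}^2 \log\|u\|_{L^2(D)}.$$
Halving the Gross bound above and inserting it in the first term on the right, together with the obvious estimate $\|\nabla v\|_{L^2(\rd)}^2\le \|u\|_{W_0^{1,2}(D)}^2/\|u\|_{L^2(D)}^2$, immediately produces (i).

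\textbf{Step 2 (assertion (ii)).} Decompose $\log_+|u|=\log|u|+(\log|u|)_-$, where $(\log|u|)_-:=\max\{-\log|u|,0\}$ vanishes on $\{|u|\ge 1\}$. A one-variable calculation on $\phi(s):=-s^2\log s$ for $s\in(0,1]$ gives $\phi'(s)=-s(2\log s+1)$, so the maximum of $\phi$ is attained at $s=e^{-1/2}$ with value $\phi(e^{-1/2})=1/(2e)$. Hence $|u(x)|^2(\log|u(x)|)_-\le 1/(2e)$ pointwise on $D$, and integrating adds exactly $\lambda(D)/(2e)$ to the bound obtained in (i), yielding (ii).

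\textbf{Main obstacle.} There is essentially none: the lemma is a repackaging of the Gross logarithmic Sobolev inequality in a form convenient for absorbing the $\log|u|$ terms by a gradient plus a logarithm of the $L^2$-norm, and the only elementary calculation beyond citing Gross is the optimization giving the constant $1/(2e)$. If a fully self-contained derivation is preferred, one may reproduce Gross's argument by differentiating $t\mapsto\|e^{t\Delta}u\|_{L^{p(t)}(\rd)}$ along a judicious choice of $p(t)$; but for the purposes of this paper, invoking the Euclidean log-Sobolev inequality as cited above is sufficient.
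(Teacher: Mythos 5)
Your proof is correct. The paper does not actually prove this lemma — it simply cites \cite{Shang-2022} — and the argument in that reference is the same as yours: extend by zero from $W_0^{1,2}(D)$ to $W^{1,2}(\R^d)$, normalize, apply the Euclidean (Gross/Weissler) logarithmic Sobolev inequality, and then for part (ii) absorb the region $\{|u|\le 1\}$ by the elementary pointwise bound $\sup_{s\in(0,1]}(-s^2\log s)=1/(2e)$.
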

We frequently use the following nonlinear versions of Gronwall's inequalities, whose proof can be found in \cite{Mitrinovic-1991,Shang-2022} 
\begin{lem}\label{lem:nonlinear-gronwall}
Let $y(\cdot), f(\cdot)$ and $g(\cdot)$ be non-negative functions on $\R$ such that $y(\cdot)$ satisfies the integral inequality 
\begin{align*}
y(t) \le C + \int_{t_0}^t \big( f(s) y(s) + g(s) y^\alpha(s)\big)\,{\rm d}s,~~~t\ge t_0\ge 0,
\end{align*}
where $C\ge 0$ and $\alpha\in [0,1)$ are given constants. Then, for any $t\ge t_0\ge 0$,
\begin{align*}
y(t)\le \Bigg\{ C^{1-\alpha} \exp\Big( (1-\alpha) \int_{t_0}^t f(s)\,{\rm d}s\Big) + (1-\alpha) \int_{t_0}^t g(s)  \exp\Big( (1-\alpha) \int_{s}^t f(r)\,{\rm d}r\Big)\,{\rm d}s \Bigg\}^\frac{1}{1-\alpha}\,.
\end{align*}
\end{lem}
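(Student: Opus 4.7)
The plan is to reduce this nonlinear Gronwall-type inequality to the classical linear Gronwall lemma via a Bernoulli-type power substitution. Define
\begin{align*}
u(t) := C + \int_{t_0}^{t} \bigl(f(s) y(s) + g(s) y^\alpha(s)\bigr)\,{\rm d}s, \qquad t \ge t_0,
\end{align*}
so that $y(t) \le u(t)$ pointwise, $u$ is absolutely continuous, nondecreasing (since $f, g, y \ge 0$), and $u(t_0) = C$. The first step is to differentiate and use $y \le u$ together with $\alpha \in [0,1)$ to obtain
\begin{align*}
u'(t) = f(t) y(t) + g(t) y^\alpha(t) \le f(t) u(t) + g(t) u(t)^\alpha \quad \text{a.e. } t \ge t_0,
\end{align*}
so it suffices to bound any nonnegative function satisfying this differential inequality.

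Next, assume temporarily that $C > 0$, so that $u(t) \ge C > 0$ for all $t \ge t_0$, and set $v(t) := u(t)^{1-\alpha}$. This substitution is the classical trick for scalar Bernoulli ODEs and is the key idea that linearizes the problem. Differentiating gives
\begin{align*}
v'(t) = (1-\alpha)\, u(t)^{-\alpha} u'(t) \le (1-\alpha)\bigl(f(t) v(t) + g(t)\bigr),
\end{align*}
which is a linear differential inequality in $v$ with initial value $v(t_0) = C^{1-\alpha}$. Invoking the classical linear Gronwall lemma then yields
\begin{align*}
v(t) \le C^{1-\alpha}\exp\!\Bigl((1-\alpha)\!\int_{t_0}^{t} f(s)\,{\rm d}s\Bigr) + (1-\alpha)\!\int_{t_0}^{t} g(s)\exp\!\Bigl((1-\alpha)\!\int_{s}^{t} f(r)\,{\rm d}r\Bigr)\,{\rm d}s,
\end{align*}
and raising this to the power $1/(1-\alpha)$ together with $y(t) \le u(t) = v(t)^{1/(1-\alpha)}$ reproduces the claimed bound.

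The one subtle point that I expect to be the main obstacle is the degenerate case $C = 0$, where $u$ may vanish at $t_0$ and division by $u^\alpha$ is not immediately justified. The standard remedy is to replace $C$ by $C + \varepsilon$ for arbitrary $\varepsilon > 0$ (the hypothesized integral inequality trivially remains valid with the strictly larger constant $C + \varepsilon$), carry out the above argument for the resulting strictly positive $u_\varepsilon$, and then pass to the limit $\varepsilon \downarrow 0$ using continuity of the right-hand side in $C$. Apart from this technicality, the proof is a direct Bernoulli-type linearization followed by one application of the classical linear Gronwall lemma; a simple check with $g \equiv 0$ recovers the usual exponential Gronwall estimate, and with $f \equiv 0$, $\alpha = 1/2$ one recovers the standard Bihari-type bound, confirming that the constants are placed correctly.
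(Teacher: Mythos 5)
Your proof is correct. The paper does not give its own argument for this lemma — it simply cites \cite{Mitrinovic-1991,Shang-2022} — and your Bernoulli-type linearization ($v = u^{1-\alpha}$, reducing to the classical linear Gronwall inequality, with the $C>0\Rightarrow C+\varepsilon$ regularization for the degenerate case) is exactly the standard route one would find in those references; the only points worth making fully explicit are that $y^\alpha \le u^\alpha$ uses the monotonicity of $x\mapsto x^\alpha$ for $\alpha\ge 0$, and that $v=u^{1-\alpha}$ is absolutely continuous because $u$ is absolutely continuous and bounded below by $C>0$.
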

\begin{lem}\label{lem:log-Gronwall}
Let $y(\cdot), f(\cdot), g(\cdot), h(\cdot)$ and $a(\cdot)$ be non-negative functions on $[0,\infty)$, and $h(\cdot)$ be increasing function with $h(0)\ge 1$. Assume that, for any $t\ge 0$, the following inequality
\begin{align*}
y(t) + a(t) \le h(t) + \int_0^t f(s)y(s)\,{\rm d}s + \int_0^t g(s) y(s)\log(y(s))\,{\rm d}s
\end{align*}
holds and all the integrals are finite. Then for any $t\ge 0$, one has
\begin{align*}
y(t)+ a(t)\le \Big( h(t)\Big)^{\exp(G(t))} \exp\Big\{ \exp(G(t))\int_0^t f(s)\exp(-G(s))\,{\rm d}s\Big\}\,, 
\end{align*}
where the function $G(t)$ is given by
$$ G(t):=\int_0^t g(s)\,{\rm d}s\,.$$
\end{lem}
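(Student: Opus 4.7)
The plan is to reduce the claimed estimate to a linear Gronwall inequality after two substitutions that tame the non-monotone nonlinearity $y\log y$. First, I would exploit $y\log y\le y(\log y)^+$ and $g\ge 0$ to weaken the hypothesis to
\begin{align*}
y(t)+a(t)\le \phi(t):= h(t)+\int_0^t f(s)y(s)\,{\rm d}s + \int_0^t g(s)\, y(s)(\log y(s))^+\,{\rm d}s.
\end{align*}
Because $h$ is increasing with $h(0)\ge 1$, the majorant $\phi$ is non-decreasing and satisfies $\phi(t)\ge 1$ and $y(t)\le \phi(t)$ for every $t\ge 0$.

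Next, I would close the inequality in $\phi$ alone. The key observation is that $x\mapsto x(\log x)^+$ vanishes on $[0,1]$ and coincides with the increasing map $x\log x$ on $[1,\infty)$; combined with $\phi(s)\ge 1$ and $y(s)\le \phi(s)$, this yields $y(s)(\log y(s))^+\le \phi(s)\log\phi(s)$, hence the self-referential bound
\begin{align*}
\phi(t) \le h(t) + \int_0^t f(s)\phi(s)\,{\rm d}s + \int_0^t g(s)\phi(s)\log\phi(s)\,{\rm d}s.
\end{align*}

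Fixing $T>0$ and using $h(s)\le h(T)$ for $s\le T$, I would compare $\phi$ on $[0,T]$ with the solution $u$ of the ODE $u'=fu+gu\log u$, $u(0)=h(T)$. Since $u\ge h(T)\ge 1$ on $[0,T]$, the nonlinearity $x\mapsto x\log x$ is locally Lipschitz on the relevant range, and a standard comparison argument gives $\phi(t)\le u(t)$. The ODE linearises under $w=\log u$, yielding $w'=f+gw$, which after multiplication by the integrating factor $\exp(-G(t))$ integrates to
\begin{align*}
w(t)\exp(-G(t)) = \log h(T) + \int_0^t f(s)\exp(-G(s))\,{\rm d}s.
\end{align*}
Exponentiating and evaluating at $t=T$ yields the claimed bound for the arbitrary $T$.

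The hardest part will be justifying the comparison $\phi\le u$: the nonlinearity $x\mapsto x\log x$ is neither globally Lipschitz nor monotone near the origin, but the uniform lower bound $\phi\ge 1$ ensures the comparison takes place in a region where it is $C^1$ and locally Lipschitz, so a direct Gronwall estimate on $u-\phi$ closes the argument. A minor technical point is that $h$ need not be differentiable, which is why I keep everything in integral form and perform the monotonicity step $h(s)\le h(T)$ before invoking any ODE machinery.
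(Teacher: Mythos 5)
The paper does not prove this lemma; it is stated with a citation to \cite{Mitrinovic-1991,Shang-2022}, so there is no "paper proof" to compare against. Your argument is nonetheless essentially the standard one and is correct in substance: replace $y\log y$ by the nondecreasing majorant $y(\log y)^+$, observe that the resulting majorant $\phi$ is nondecreasing with $\phi\ge h\ge 1$ and $y\le\phi$, use monotonicity of $x\mapsto x\log x$ on $[1,\infty)$ to close the inequality in $\phi$, freeze $h$ at $h(T)$, linearize via $\log$, and integrate.

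The one place you have left a real gap is precisely the step you flag as hardest, the comparison $\phi\le u$ with the ODE solution $u$. Invoking "a standard comparison argument" for the nonlinear Carath\'eodory ODE $u'=fu+gu\log u$ is more machinery than the problem needs, and it requires hypotheses (continuity of $F(t,x)$ in $t$, or a Carath\'eodory-type existence/comparison theorem) that $f,g$ are not given to satisfy. The cleaner route, and the one used in \cite{Shang-2022}, is to sidestep the ODE entirely: for fixed $T$ set
\begin{align*}
\psi(t):=h(T)+\int_0^t f(s)\phi(s)\,{\rm d}s+\int_0^t g(s)\,\phi(s)\log\phi(s)\,{\rm d}s, \qquad t\in[0,T].
\end{align*}
Then $\psi$ is absolutely continuous, $\psi(0)=h(T)\ge 1$, $\psi$ is nondecreasing (so $\psi\ge 1$), and by your Step~3 one has $\phi\le\psi$ on $[0,T]$. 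Differentiating a.e. and using $1\le\phi\le\psi$ together with the monotonicity of $x\mapsto x\log x$ on $[1,\infty)$ gives $\psi'(t)\le f(t)\psi(t)+g(t)\psi(t)\log\psi(t)$, so $w:=\log\psi\ge 0$ satisfies the \emph{linear} a.e.\ differential inequality $w'\le f+gw$. The integrating factor $e^{-G}$ then yields $w(t)\le e^{G(t)}\big(\log h(T)+\int_0^t f e^{-G}\big)$, and exponentiating and taking $t=T$ gives the stated bound directly, with no need for an abstract comparison theorem and no regularity on $h,f,g$ beyond local integrability. Everything else in your write-up — the use of $y\log y\le y(\log y)^+$, the monotonicity observations, the freezing $h(s)\le h(T)$ before any differentiation — is correct and is exactly what is needed.
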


\begin{lem} \label{lem:est-galerkin}
Under assumptions \ref{A1}-\ref{A3}, the following estimate holds: for any $p \ge 2$,
    \begin{align}
       \underset{n}\sup\, \mathbb{E} & \Big[  \underset{t\in [0,T_p]}\sup\,\| u_n(s)\|_{L^2(D)}^p + \int_{0}^{T_p} \| u_n(s)\|_{L^2(D)}^{p-2} \| u_n(s)\|_{W_0^{1,2}(D)}^2 \,{\rm d}s \Big] \notag \\
       & \le C_{p,\theta} \big( 1+ \| u_0\|_{L^2(D)}^{\frac{p^2}{p-1+\theta}} \big) < \infty\,, \label{esti:uniform-galerkin}
    \end{align}
   for some constant $C_{p,\theta}$, where $T_p := \log \frac{p}{p-1+\theta}$ and $\theta$ is the constant in the assumption \ref{A3}. Moreover, $T_p$ is decreasing in $p \in [2,\infty)$.
   \end{lem}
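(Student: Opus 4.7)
The plan is to apply It\^o's formula to $\phi(u)=\|u\|_{L^2(D)}^p$, dominate the nonlinear drift via the logarithmic Sobolev inequality \eqref{inq:log-sov-1} and the Poisson compensator via the sub-linear growth condition \ref{A3}, and close the resulting pathwise estimate through the logarithmic Gronwall inequality of Lemma~\ref{lem:log-Gronwall}. The specific horizon $T_p=\log(p/(p-1+\theta))$ will appear as the time at which the iterated-exponential Gronwall bound produces precisely the target power $p^2/(p-1+\theta)$ on $\|u_0\|_{L^2(D)}$.

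Applying It\^o's formula to $\phi(u_n)$ along \eqref{eq:finite-dim} yields the dissipation term $p\int_0^t\|u_n\|^{p-2}_{L^2(D)}\|\nabla u_n\|^2_{L^2(D)}\,{\rm d}s$, the logarithmic drift $p\int_0^t\|u_n\|^{p-2}_{L^2(D)}\int_D u_n^2\log|u_n|\,{\rm d}x\,{\rm d}s$, a local martingale $M_n(t)$, and a Taylor-type compensator
\begin{align*}
J_n(t)\le C_p\int_0^t\!\int_{\pmb{E}}\bigl(\|u_n\|^{p-2}_{L^2(D)}\|\eta(u_n;z)\|^2_{L^2(D)}+\|\eta(u_n;z)\|^p_{L^2(D)}\bigr)\,m({\rm d}z)\,{\rm d}s.
\end{align*}
Choosing $\eps=1/p$ in \eqref{inq:log-sov-1} gives
\begin{align*}
p\|u_n\|^{p-2}_{L^2(D)}\!\int_D u_n^2\log|u_n|\,{\rm d}x\le \|u_n\|^{p-2}_{L^2(D)}\|\nabla u_n\|^2_{L^2(D)}+C_{p,d}\|u_n\|^p_{L^2(D)}+\|u_n\|^p_{L^2(D)}\log\|u_n\|^p_{L^2(D)},
\end{align*}
so the first term is absorbed into the dissipation on the left-hand side. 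For $J_n$, assumption \ref{A3} combined with ${\tt h}\in L^1(\pmb{E},m)\cap L^\infty(\pmb{E},m)$ and Young's inequality applied to the sub-$p$ powers $\|u_n\|^{p-2+2\theta}$ and $\|u_n\|^{p\theta}$ (legitimate since $\theta<1$) yields $J_n(t)\le C_{p,\theta}\int_0^t(1+\|u_n\|^p_{L^2(D)})\,{\rm d}s$.

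Introducing the stopping time $\tau_N=\inf\{t:\|u_n(t)\|^p_{L^2(D)}>N\}$ to make the manipulation of $y\log y$ rigorous, and setting $y(t):=\sup_{s\le t\wedge\tau_N}\|u_n(s)\|^p_{L^2(D)}$, we take the supremum in time and apply BDG to $M_n$ together with a Cauchy--Schwarz--Young argument that transfers a fraction of $y(t)$ to the right-hand side. This produces the pathwise inequality
\begin{align*}
y(t)\le h(t)+C_{p,\theta}\int_0^t y(s)\,{\rm d}s+\int_0^t y(s)\log y(s)\,{\rm d}s,
\end{align*}
where $h(t)$ is nondecreasing with $h(0)\ge \|P_n u_0\|^p_{L^2(D)}+1$. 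Lemma~\ref{lem:log-Gronwall} with $g\equiv 1$ (hence $G(t)=t$) and $f\equiv C_{p,\theta}$ then gives $y(t)\le h(t)^{\exp(t)}\exp\bigl(C_{p,\theta}(\mathrm{e}^t-1)\bigr)$. Evaluating at $t=T_p$ we have $\exp(T_p)=p/(p-1+\theta)$, so $h(T_p)^{\exp(T_p)}$ generates the exponent $p\cdot p/(p-1+\theta)=p^2/(p-1+\theta)$ on $\|u_0\|^p_{L^2(D)}$. Taking expectation, controlling $\mathbb{E}[h(T_p)^{\exp(T_p)}]$ via a BDG bound on the jump martingale together with \ref{A3}, and passing $N\to\infty$ by Fatou's lemma delivers \eqref{esti:uniform-galerkin}. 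The monotonicity of $T_p$ in $p\in[2,\infty)$ follows from differentiating $T_p=\log p-\log(p-1+\theta)$.

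The main obstacle is that the $y\log y$ term in the Gronwall inequality prevents a direct expectation step, since $x\mapsto x\log x$ is convex and Jensen's inequality runs in the wrong direction. This forces the pathwise log-Gronwall argument above, together with a stopping-time localisation and a concluding BDG estimate for $\mathbb{E}[h(T_p)^{\exp(T_p)}]$ that requires integrability of exponential moments of the noise through ${\tt h}\in L^1\cap L^\infty$. A secondary subtlety lies in the precise absorption of the sub-linear jump-compensator powers $\|u_n\|^{p-2+2\theta}$ and $\|u_n\|^{p\theta}$, which is exactly where the condition $\theta<1$ in \ref{A3} is essential and which also dictates the particular value of $T_p$ through the exponent match $\exp(T_p)=p/(p-1+\theta)$.
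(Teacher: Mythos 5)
Your overall strategy matches the paper's: It\^o--L\'evy formula for $\|u_n\|_{L^2(D)}^p$, absorb the logarithmic drift with \eqref{inq:log-sov-1}, absorb the Poisson compensator with the sub-linear growth of \ref{A3}, run the pathwise log-Gronwall (Lemma~\ref{lem:log-Gronwall}) up to time $T_p$ chosen so that $\mathrm{e}^{T_p}=\frac{p}{p-1+\theta}$, then take expectation. That is the paper's route, and the exponent-matching rationale you give for $T_p$ is the right one.

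However, your middle paragraph has a logical confusion. You write that you ``take the supremum in time and apply BDG to $M_n$ together with a Cauchy--Schwarz--Young argument that transfers a fraction of $y(t)$ to the right-hand side,'' and call the result a \emph{pathwise} inequality. BDG is an expectation-level statement; once you invoke it there is no pathwise inequality to feed into Lemma~\ref{lem:log-Gronwall}. The correct ordering, which the paper uses, is: keep the running supremum of the stochastic integral on the right-hand side \emph{pathwise} (it becomes part of the nondecreasing forcing $h(t)$, not a fraction of $y(t)$); apply Lemma~\ref{lem:log-Gronwall} $\omega$-by-$\omega$ to $\|u_n(t)\|_{L^2(D)}^p$ to get $\|u_n(t)\|^p\le (1\vee{\tt M}(t))^{\mathrm{e}^t}\mathrm{e}^{C(\mathrm{e}^t-1)}$; and only \emph{afterwards} take expectation and deploy BDG, \cite[Corollary~2.4]{Liu-2019}, and Young's inequality on $\mathbb{E}\big[{\tt M}(t\wedge\tau_R)^{p/(p-1+\theta)}\big]$.

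Related, your outline omits the final Gr\"onwall pass. When you estimate $\mathbb{E}\big[h(T_p)^{\exp(T_p)}\big]$ via BDG and Young, the absorption of the supremum $\sup_s\|u_n(s)\|^{\frac{p(p-1)}{p-1+\theta}}$ (coming out of the quadratic variation) produces an $\epsilon\,\mathbb{E}[\sup_s\|u_n(s)\|^p]$ term \emph{and} a residual $C_\epsilon\int_0^t\mathbb{E}[\sup_{r\le s}\|u_n(r)\|^p]\,{\rm d}s$. You need a standard linear Gr\"onwall iteration on $\phi(t):=\mathbb{E}[\sup_{s\le t\wedge\tau_R}\|u_n(s)\|^p]$ at this stage to close; this step is where the precise Young conjugates $(P,Q)=(\tfrac{p-1+\theta}{p-1},\tfrac{p-1+\theta}{\theta})$ and the condition $\theta<1$ enter essentially. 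Finally, the control of $\mathbb{E}[h(T_p)^{\exp(T_p)}]$ does not require ``integrability of exponential moments of the noise,'' only the $m$-integrability of ${\tt h}$ in \ref{A3}, so that clause in your obstacle paragraph overstates the hypothesis used.
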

    \begin{proof}
        For any $n \in \mathbb{N}, R > 0,$ we define stopping times
        \begin{align*}
            \tau_{R}^{n} := \inf \Big \{t>0:\|u_n(t)\|_{L^2(D)} >R \Big \} \wedge T_p.
        \end{align*}
        Because $u_n$ has no explosion, as $R \rightarrow \infty$ we have, 
        \begin{align*}
            \tau_{R}^{n} \rightarrow T_p ~~ \text{for}~~ \ \mathbb{P}-a.s. 
        \end{align*}
   By employing the It\^{o}-L\'{e}vy formula, one has, for $t \le \tau_{R}^{n}$ 
   \begin{align*}
      {\rm d} \|u_n(t) \|_{L^2(D)}^{2} & = -2\| \nabla u_{n}(t)\|_{L^2(D)}^{2} \, {\rm d}t + 2 \big(u_n(t) \log|u_n(t)|, u_n(t)\big) \, {\rm d}t \\
      & + \int_{\pmb{E}}  \Big ( \|  u_n(t) + P_{n} \eta(u_n(t);z) \|_{L^2(D)}^2 -  \|  u_n(t)\|_{L^2(D)}^2 \Big) \, \widetilde{N}({\rm d}z,{\rm d}t) \\
      & +  \int_{\pmb{E}} \|P_{n} \eta(u_n(t);z) \|_{L^2(D)}^2 m({\rm d}z)\,{\rm d}t.
   \end{align*}
   Again, using It\^{o}-L\'{e}vy formula, we have 
   \begin{align*}
       \|u_n(t) \|_{L^2(D)}^{p} & = \|u_0\|_{L^2(D)}^{p} - p  \int_{0}^{t} \|u_n(s) \|_{L^2(D)}^{p-2} \|u_n(s) \|_{W_{0}^{1,2}(D)}^{2} \,{\rm d}s \\
       & + p  \int_{0}^{t} \|u_n(s) \|_{L^2(D)}^{p-2} \big(u_n(s) \log|u_n(s)|, u_n(s)\big)  \,{\rm d}s \\
       & +  \int_{0}^{t} \int_{\pmb{E}} \Big ( \|  u_n(s) + P_{n} \eta(u_n(s);z) \|_{L^2(D)}^p -  \|  u_n(s)\|_{L^2(D)}^p  \Big) \, \widetilde{N}({\rm d}z,{\rm d}s) \\
       & + \int_{0}^{t} \int_{\pmb{E}} \Big ( \|  u_n(s) + P_{n} \eta(u_n(s);z) \|_{L^2(D)}^p - \| u_n(s)\|_{L^2(D)}^p \\
       & \hspace{3cm} - p \|u_n(s) \|_{L^2(D)}^{p-2} \big( P_{n} \eta(u_n(s);z), u_n(s) \big) \Big) \,  m({\rm d}z)\,{\rm d}s. 
   \end{align*}
   We use Taylor's formula and the logarithmic Sobolev inequality \eqref{inq:log-sov-1} with $\epsilon = \frac{1}{2}$ to have
   \begin{align*}
       \| & u_n(t) \|_{L^2(D)}^{p} + p \int_{0}^{t} \|u_n(s) \|_{L^2(D)}^{p-2} \|u_n(s) \|_{W_{0}^{1,2}(D)}^{2} \,{\rm d}s \\
       & \le \|u_0\|_{L^2(D)}^{p}   + p  \int_{0}^{t} \|u_n(s) \|_{L^2(D)}^{p-2} \bigg( \frac{1}{2} \|u_n(s) \|_{W_{0}^{1,2}(D)}^{2} + \frac{d}{4}\log(2) \|u_n(s) \|_{L^2(D)}^2 \\
       & + \|u_n(s) \|_{L^2(D)}^2 \log \|u_n(s) \|_{L^2(D)}  \bigg)  \,{\rm d}s 
        + p  \int_{0}^{t} \int_{\pmb{E}} \|u_n(s) \|_{L^2(D)}^{p-2} \big( P_{n} \eta(u_n(s);z), u_n(s) \big)  \,   \widetilde{N}({\rm d}z,{\rm d}s) \\
       & +p(p-1)  \int_{0}^{t} \int_{\pmb{E}} \int_0^1 (1-\gamma) \Big ( \| u_n(s) + \gamma P_{n} \eta(u_n(s);z) \|_{L^2(D)}^{p-2} \| P_{n}\eta(u_n(s);z)\|_{L^2(D)}^2  \Big)\,d\gamma \,N({\rm d}z,{\rm d}s), 
   \end{align*} 
   and hence we get
   \begin{align} \label{eq:gron-1}
& \|  u_n(t) \|_{L^2(D)}^{p} + \frac{p}{2} \int_{0}^{t} \|u_n(s) \|_{L^2(D)}^{p-2} \|u_n(s) \|_{W_{0}^{1,2}(D)}^{2} \,{\rm d}s \notag \\
& \le  \|u_0\|_{L^2(D)}^{p} + C \int_{0}^{t}  \|u_n(s) \|_{L^2(D)}^{p} \,{\rm d}s  + \int_{0}^{t}  \|u_n(s) \|_{L^2(D)}^{p} \log  \|u_n(s) \|_{L^2(D)}^p  \,{\rm d}s \notag \\
& +  p \underset{r \in [0,t]}\sup \left |\int_{0}^{r} \int_{\pmb{E}} \|u_n(s) \|_{L^2(D)}^{p-2} \big(P_n \eta(u_n(s);z), u_n(s) \big)  \,   \widetilde{N}({\rm d}z,{\rm d}s)   \right | \notag \\
& + C_p \int_{0}^{t} \int_{\pmb{E}} \Big ( \| u_n(s)\|_{L^2(D)}^{p-2} \| \eta(u_n(s);z) \|_{L^2(D)}^{2} 
       + \|\eta(u_n(s);z)\|_{L^2(D)}^p  \Big) \,N({\rm d}z,{\rm d}s)\notag \\
       & \le {\tt M}(t) + C \int_{0}^{t}  \|u_n(s) \|_{L^2(D)} \,{\rm d}s  + \int_{0}^{t}  \|u_n(s) \|_{L^2(D)}^{p} \log  \|u_n(s) \|_{L^2(D)}^{p}  \,{\rm d}s,
   \end{align}
   where 
   \begin{align*}
       {\tt M}(t) := & \|u_0\|_{L^2(D)}^{p} + p \underset{r \in [0,t]}\sup \left |\int_{0}^{r} \int_{\pmb{E}} \|u_n(s) \|_{L^2(D)}^{p-2} \big( P_n\eta(u_n(s);z), u_n(s) \big)  \,   \widetilde{N}({\rm d}z,{\rm d}s)   \right |  \\
       &+ C_p  \int_{0}^{t} \int_{\pmb{E}} \Big( \| u_n(s)\|_{L^2(D)}^{p-2} \| \eta(u_n(s);z) \|_{L^2(D)}^{2} 
       + \|\eta(u_n(s);z)\|_{L^2(D)}^p  \Big) \,N({\rm d}z,{\rm d}s).
   \end{align*}
   Note that these constants $C, C_p$ are independent of $n$. We use log-Gronwall's inequality  i.e., Lemma \ref{lem:log-Gronwall} to \eqref{eq:gron-1} to have
   \begin{align}
       \| & u_n(t) \|_{L^2(D)}^{p} + \frac{p}{2} \int_{0}^{t} \|u_n(s) \|_{L^2(D)}^{p-2} \|u_n(s) \|_{W_{0}^{1,2}(D)}^{2} \,{\rm d}s 
        \le \big( 1 \vee {\tt M}(t) \big)^{e^{t}} \times e^{C(e^t -1)}. \label{eq:gron-2}
   \end{align}
Hence, from \eqref{eq:gron-2}, we get 
\begin{align}\label{eq:est-p1}
 \mathbb{E} & \bigg[\underset{s \in [0,t \wedge \tau_{R}^{n}]}\sup  \| u_n(s)\|_{L^2(D)}^{p}  \bigg] + \frac{p}{2} \mathbb{E} \Big[\int_{0}^{t \wedge \tau_{R}^{n}} \|u_n(s) \|_{L^2(D)}^{p-2} \|u_n(s) \|_{W_{0}^{1,2}(D)}^{2} \,{\rm d}s \Big] \notag \\
      & \le e^{C(e^{T_p} -1)} \mathbb{E} \bigg[ \big({\tt M}(t \wedge \tau_{R}^{n}) + 1 \big)^{e^{T_p}}  \bigg]  \le C \mathbb{E} \bigg[ \big({\tt M}(t \wedge \tau_{R}^{n}) + 1 \big)^{\frac{p}{p-1+\theta}} \bigg] \notag \\
      & \le C \bigg( 1 + \| u_0\|_{L^2(D)}^{\frac{p^2}{p-1+\theta}} \bigg) + C \mathbb{E} \Bigg[\underset{r \in [0,t \wedge \tau_{R}^{n}]}\sup \left |\int_{0}^{r} \int_{\pmb{E}} \|u_n(s) \|_{L^2(D)}^{p-2} \big(P_n \eta(u_n(s);z), u_n(s) \big)  \,   \widetilde{N}({\rm d}z,{\rm d}s)   \right | ^{\frac{p}{p-1+\theta}}\Bigg] \notag \\
      & \quad + C \Bigg|\mathbb{E} \Big[ \int_{0}^{t\wedge \tau_{R}^{n}} \int_{\pmb{E}} \Big( \| u_n(s)\|_{L^2(D)}^{p-2}\| \eta(u_n(s);z) \|_{L^2(D)}^{2} + \| \eta(u_n(s);z) \|_{L^2(D)}^{p} \Big)  \,N({\rm d}z,{\rm d}s)\Big] \Bigg|^{\frac{p}{p-1+\theta}}\notag \\
      & \equiv  \sum_{i=1}^3\mathcal{K}_{i}.
\end{align}
   One can easily see that using the assumption \ref{A3} and H\"{o}lder's inequality to have
   \begin{align} \label{est1}
        \int_{0}^{t} \int_{\pmb{E}} \| \eta(u(s);z) \|_{L^2(D)}^{\gamma}  m({\rm d}z)\,{\rm d}s \le C \int_{0}^{t} \big( 1 + \| u\|_{L^2(D)}^{\gamma \theta} \big)  \,{\rm d}s, \quad \forall \gamma \ge 2. 
   \end{align}
 By applying Burkholder-Davis-Gundy inequality~(BDG inequality), Young's inequality and \cite[Corollary $2.4$]{Liu-2019} along with \eqref{est1}, we obtain 
 \begin{align}
     \mathcal{K}_2 & \le C \mathbb{E} \Bigg[ \bigg(\int_{0}^{t\wedge \tau_{R}^{n}} \int_{\pmb{E}} \|u_n(s) \|_{L^2(D)}^{2p-2} \| P_{n}\eta(u_n(s);z)\|_{L^2(D)}^2 \,N({\rm d}z,{\rm d}s) \bigg) ^{\frac{p}{2(p-1+\theta)}}\Bigg] \notag  \\
      & \le  C \mathbb{E} \bigg[ \underset{s \in [0,t \wedge \tau_{R}^{n}]}\sup  \|u_n(s) \|_{L^2(D)}^{\frac{p(p-1)}{p-1+\theta}}  \bigg( \int_{0}^{t\wedge \tau_{R}^{n}} \int_{\pmb{E}} \| \eta(u_n(s);z)\|_{L^2(D)}^2 \,N({\rm d}z,{\rm d}s) \bigg)^{\frac{p}{2(p-1+\theta)}} \bigg] \notag \\
       & \le  \epsilon_1 \mathbb{E} \bigg[ \underset{s \in [0,t \wedge \tau_{R}^{n}]}\sup  \|u_n(s) \|_{L^2(D)}^{p} \bigg] + C_{\epsilon_1} \mathbb{E} \Bigg[\bigg(\int_{0}^{t\wedge \tau_{R}^{n}} \int_{\pmb{E}} \| \eta(u_n(s);z)\|_{L^2(D)}^2 \,N({\rm d}z,{\rm d}s) \bigg)^{\frac{p}{2\theta}}\Bigg] \notag \\
       &  \le  \epsilon_1 \mathbb{E} \bigg[ \underset{s \in [0,t \wedge \tau_{R}^{n}]}\sup  \|u_n(s) \|_{L^2(D)}^{p} \bigg] + C(\epsilon_1) \mathbb{E} \bigg[\int_{0}^{t\wedge \tau_{R}^{n}} \int_{\pmb{E}} \| \eta(u_n(s);z)\|_{L^2(D)}^{\frac{p}{\theta}} \,m({\rm d}z)\,{\rm d}s \bigg] \notag \\
       & \hspace{4cm}+ C(\epsilon_1) \mathbb{E} \bigg[ \bigg(\int_{0}^{t\wedge \tau_{R}^{n}} \int_{\pmb{E}} \| \eta(u_n(s);z)\|_{L^2(D)}^2 \,m({\rm d}z)\,{\rm d}s \bigg)^{\frac{p}{2\theta}}\bigg] \notag \\
       & \le  C_{T_p} +  \epsilon_1 \mathbb{E} \bigg[ \underset{s \in [0,t \wedge \tau_{R}^{n}]}\sup  \|u_n(s) \|_{L^2(D)}^{p} \bigg] + C(\epsilon_1)  \mathbb{E} \bigg[\int_{0}^{t\wedge \tau_{R}^{n}} \| u_n(s)\|_{L^2(D)}^{p} \,{\rm d}s \bigg]  \notag \\
       & \hspace{2cm}+  C(\epsilon_1)  \mathbb{E} \bigg[ \Big( \int_{0}^{t\wedge \tau_{R}^{n}} \| u_n(s)\|_{L^2(D)}^{2\theta} \,{\rm d}s \Big)^{\frac{p}{2\theta}}\bigg] \notag \\
       & \le  C_{T_p} + \epsilon_1 \mathbb{E} \bigg[ \underset{s \in [0,t \wedge \tau_{R}^{n}]}\sup  \|u_n(s) \|_{L^2(D)}^{p} \bigg] + C(\epsilon_1)  \mathbb{E} \bigg[\int_{0}^{t\wedge \tau_{R}^{n}} \| u_n(s)\|_{L^2(D)}^{p} \,{\rm d}s \bigg] \notag \\
       & \le  C_{T_p} + \epsilon_1 \mathbb{E} \bigg[ \underset{s \in [0,t \wedge \tau_{R}^{n}]}\sup  \|u_n(s) \|_{L^2(D)}^{p} \bigg] + C(\epsilon_1) \int_{0}^{t}\mathbb{E} \bigg[ \underset{r \in [0,s \wedge \tau_{R}^{n}]}\sup  \|u_n(r) \|_{L^2(D)}^{p} \bigg] \,{\rm d}s.\label{esti:k2}
 \end{align}
 We use \cite[Corollary $2.4$]{Liu-2019} along with \eqref{est1} to estimate $\mathcal{K}_3$ as follows.
 \begin{align*}
     \mathcal{K}_{3}  & \le C_p  \mathbb{E} \Bigg[\bigg( \int_{0}^{t\wedge \tau_{R}^{n}} \int_{\pmb{E}} \| u_n(s)\|_{L^2(D)}^{p-2} \| \eta(u_n(s);z) \|_{L^2(D)}^{2}   \,N({\rm d}z,{\rm d}s) \bigg)^{\frac{p}{p-1+\theta}}\Bigg] \\
      & \quad +  C_p  \mathbb{E} \Bigg[ \bigg(\int_{0}^{t\wedge \tau_{R}^{n}} \int_{\pmb{E}} \| P_{n}\eta(u_n(s);z)\|_{L^2(D)}^p  \,N({\rm d}z,{\rm d}s) \bigg)^{\frac{p}{p-1+\theta}}\Bigg] \\
      & \le C_p  \mathbb{E} \Bigg[ \int_{0}^{t\wedge \tau_{R}^{n}} \int_{\pmb{E}} \bigg( \| u_n(s)\|_{L^2(D)}^{p-2} \| \eta(u_n(s);z) \|_{L^2(D)}^{2} \bigg)^{\frac{p}{p-1+\theta}}   \, m({\rm d}z)\,{\rm d}s \Bigg] \\
     & \quad + C_p  \mathbb{E} \Bigg[ \bigg(\int_{0}^{t\wedge \tau_{R}^{n}} \int_{\pmb{E}} \| u_n(s)\|_{L^2(D)}^{p-2} \| \eta(u_n(s);z) \|_{L^2(D)}^{2} \, m({\rm d}z)\,{\rm d}s \bigg)^{\frac{p}{p-1+\theta}}\Bigg] \\ 
     & \quad + C_p  \mathbb{E} \Bigg[ \int_{0}^{t\wedge \tau_{R}^{n}} \int_{\pmb{E}} \| P_{n}\eta(u_n(s);z)\|_{L^2(D)}^{\frac{p^2}{p-1+\theta}}  \, m({\rm d}z)\,{\rm d}s \Bigg] \\
      & \quad + C_p  \mathbb{E} \Bigg[\bigg( \int_{0}^{t\wedge \tau_{R}^{n}} \int_{\pmb{E}} \| P_{n}\eta(u_n(s);z)\|_{L^2(D)}^p  \, m({\rm d}z)\,{\rm d}s \bigg)^{\frac{p}{p-1+\theta}}\Bigg]  \equiv  \sum_{i=1}^4\mathcal{K}_{3,i}.
 \end{align*}
 One can use Young's inequality along with \eqref{est1} to have
 \begin{align*}
     \mathcal{K}_{3,1} & \le C \mathbb{E}\Bigg[ \sup_{s\in [0, t\wedge \tau_R^n]} \|u_n(s)\|_{L^2(D)}^{\frac{p(p-2)}{p-1+\theta}} 
      \Big(\int_{0}^{t\wedge \tau_{R}^{n}} \int_{\pmb{E}} \|\eta(u_n(s);z)\|_{L^2(D)}^{\frac{2p}{p-1+\theta}}\Big)\Bigg ]\notag\\
     & \le C + \epsilon_{2} \mathbb{E}  \bigg[ \underset{s \in [0,t \wedge \tau_{R}^{n}]}\sup  \|u_n(s) \|_{L^2(D)}^{p} \bigg] + C_{p,\epsilon_{2}} \mathbb{E} \bigg[\int_{0}^{t\wedge \tau_{R}^{n}} \| u_n(s)\|_{L^2(D)}^{\frac{2p\theta}{p-1+\theta}} \,{\rm d}s \bigg]^{\frac{p-1+\theta}{1+\theta}} \\
     & \le C +  \epsilon_{2} \mathbb{E}  \bigg[ \underset{s \in [0,t \wedge \tau_{R}^{n}]}\sup  \|u_n(s) \|_{L^2(D)}^{p} \bigg] + C_{p,\epsilon_{2}} \int_{0}^{t}\mathbb{E} \bigg[  \underset{r \in [0,s \wedge \tau_{R}^{n}]}\sup  \|u_n(r) \|_{L^2(D)}^{p} \bigg] \,{\rm d}s.
 \end{align*}
 Similarly, we have
 \begin{align}
     \mathcal{K}_{3,2} \le C +  \epsilon_{3} \mathbb{E}  \bigg[ \underset{s \in [0,t \wedge \tau_{R}^{n}]}\sup  \|u_n(s) \|_{L^2(D)}^{p} \bigg] + C_{p,\epsilon_{3}} \int_{0}^{t}\mathbb{E} \bigg[  \underset{r \in [0,s \wedge \tau_{R}^{n}]}\sup  \|u_n(r) \|_{L^2(D)}^{p} \bigg] \,{\rm d}s. \notag
 \end{align}
 Now, we use \eqref{est1} along with Young's inequality and the fact $\frac{p\theta}{p-1+\theta} < 1$ to have
 \begin{align}
     \mathcal{K}_{3,3} \le C_{T_p} +  C \mathbb{E} \bigg[\int_{0}^{t\wedge \tau_{R}^{n}} \| u_n(s)\|_{L^2(D)}^{\frac{p^{2}\theta}{p-1+\theta}} \,{\rm d}s \bigg] \le C_{T_p} + \int_{0}^{t}\mathbb{E} \bigg[  \underset{r \in [0,s \wedge \tau_{R}^{n}]}\sup  \|u_n(r) \|_{L^2(D)}^{p} \bigg] \,{\rm d}s. \notag
 \end{align}
 Similarly, we get
 \begin{align}
     \mathcal{K}_{3,4}  \le C_{T_p} + \int_{0}^{t}\mathbb{E} \bigg[  \underset{r \in [0,s \wedge \tau_{R}^{n}]}\sup  \|u_n(r) \|_{L^2(D)}^{p} \bigg] \,{\rm d}s. \notag
 \end{align}
Hence we have
 \begin{align}
\mathcal{K}_3 \le C + (\epsilon_2+ \epsilon_3)  \mathbb{E}  \bigg[ \underset{s \in [0,t \wedge \tau_{R}^{n}]}\sup  \|u_n(s) \|_{L^2(D)}^{p} \bigg] + C(\epsilon_2, \epsilon_3, p,\theta) \int_{0}^{t}\mathbb{E} \bigg[  \underset{r \in [0,s \wedge \tau_{R}^{n}]}\sup  \|u_n(r) \|_{L^2(D)}^{p} \bigg] \,{\rm d}s\,. \label{esti:k3}
 \end{align}
 Putting the inequalities \eqref{esti:k2} and \eqref{esti:k3} in \eqref{eq:est-p1} and choose $\epsilon_{1},\epsilon_{2},\epsilon_{3} > 0$ such that $1-\epsilon_{1}-\epsilon_{2}-\epsilon_{3} > 0$, we arrive at
 \begin{align}
     \mathbb{E}  \bigg[\underset{s \in [0,t \wedge \tau_{R}^{n}]}\sup  \| u_n(s)\|_{L^2(D)}^{p}  \bigg]   \le  C \bigg( 1 + \| u_0\|_{L^2(D)}^{\frac{p^2}{p-1+\theta}} \bigg) + C \int_{0}^{t}\mathbb{E} \bigg[  \underset{s \in [0,t \wedge \tau_{R}^{n}]}\sup  \|u_n(s) \|_{L^2(D)}^{p} \bigg] \,{\rm d}s. \notag
 \end{align}
 Using the Gr{\"o}nwall's inequality we obtain that
 \begin{align}
     \mathbb{E}  \bigg[\underset{s \in [0,T_p \wedge \tau_{R}^{n}]}\sup  \| u_n(s)\|_{L^2(D)}^{p}  \bigg] + \frac{p}{2} \mathbb{E} \int_{0}^{T_p \wedge \tau_{R}^{n}} \|u_n(s) \|_{L^2(D)}^{p-2} \|u_n(s) \|_{W_{0}^{1,2}(D)}^{2} \,{\rm d}s \le C \bigg( 1 + \| u_0\|_{L^2(D)}^{\frac{p^2}{p-1+\theta}} \bigg). \notag  
 \end{align}
Hence, the assertion follows once we send $ R \rightarrow \infty$ and use Fatou's lemmma.
\end{proof}

We would like to have a strong convergence of $\{u_n\}$ in some appropriate Banach space. The uniform bound \eqref{esti:uniform-galerkin} guarantees only weak convergence of the sequence $\{u_n\}$. To get strong convergence, one may derive the uniform bound of $\{u_n\}$ in some appropriate fractional Sobolev space and use compactness theorem as in \cite[Theorem 2.1]{Flandoli-1995}. 
To proceed further, we need some preparation. Assume that $(\mathbb{Z}, \left \| \cdot \right \|_{\mathbb{Z}})$ be a separable metric space. For any $r > 1$ and $\alpha \in (0,1)$, let $W^{\alpha,r}([0,T];\mathbb{Z})$ be the Sobolev space of all functions $u \in L^{r}([0,T];\mathbb{Z})$ such that
\begin{align} \label{norm def1}
   \int_{0}^{T} \int_{0}^{T} \frac{\left \| u(t) - u(s) \right \|_{\mathbb{Z}}^{r}}{\left | t-s \right |^{1+\alpha r}} \, {\rm d}t \, {\rm d}s < + \infty \,. 
\end{align}
with the norm 
\[ \left \| u \right \|_{W^{\alpha,r}([0,T];\mathbb{Z})}^{r} = \int_{0}^{T}  \left \| u(t) \right \|_{\mathbb{Z}}^{r} \, {\rm d}t + \int_{0}^{T} \int_{0}^{T} \frac{\left \| u(t) - u(s) \right \|_{\mathbb{Z}}^{r}}{\left | t-s \right |^{1+\alpha r}} \, {\rm d}t \, {\rm d}s.  \]
\begin{lem}\label{lem:uniform-frac-galerkin}
     The following estimation holds: for $\alpha \in (0, \frac{1}{2})$ and $p \in (1,2)$
     \begin{align*}
         \underset{n\in \mathbb{N}}\sup\, \mathbb{E} \left\{ \| u_n\|_{W^{\alpha,p}([0,T_2];{W^{-1,2}(D)})}^p  \right\}  < \infty,
     \end{align*}
     where $T_2$ is given in Lemma \ref{lem:est-galerkin}.
\end{lem}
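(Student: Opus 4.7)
The plan is to decompose $u_n$ according to the Galerkin equation \eqref{eq:finite-dim}: for $t\in[0,T_2]$,
\[
u_n(t) = u_n(0) + J_1^n(t) + J_2^n(t) + J_3^n(t),
\]
with $J_1^n(t) := \int_0^t \Delta u_n(s)\,{\rm d}s$, $J_2^n(t) := \int_0^t P_n\bigl[u_n(s)\log|u_n(s)|\bigr]\,{\rm d}s$, and $J_3^n(t) := \int_0^t \int_{\pmb E} P_n \eta(u_n(s);z)\,\widetilde N({\rm d}z,{\rm d}s)$. Since the $W^{\alpha,p}$-seminorm annihilates the $t$-constant piece $u_n(0)$, it suffices to bound $\E\|J_i^n\|_{W^{\alpha,p}([0,T_2]; W^{-1,2}(D))}^p$ uniformly in $n$ for $i=1,2,3$.

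The pieces $J_1^n$ and $J_2^n$ are absolutely continuous in $t$, so I would invoke the embedding $W^{1,p}([0,T_2]; W^{-1,2}(D)) \hookrightarrow W^{\alpha,p}([0,T_2]; W^{-1,2}(D))$ valid for $\alpha\in(0,1)$, and reduce matters to bounding their time derivatives in $L^p([0,T_2]; W^{-1,2}(D))$. For $J_1^n$, the estimate $\|\Delta v\|_{W^{-1,2}(D)}\le \|v\|_{W_0^{1,2}(D)}$ combined with Lemma \ref{lem:est-galerkin} (with $p=2$) closes the bound. For $J_2^n$, since $P_n$ is an $L^2$-contraction and $L^2(D)\hookrightarrow W^{-1,2}(D)$ on the bounded domain $D$, the task reduces to controlling $\E\int_0^{T_2}\|u_n\log|u_n|\|_{L^2(D)}^p\,{\rm d}s$. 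Using the pointwise bound $|y\log|y||\le C_\delta(1+|y|^{1+\delta})$ for arbitrary $\delta>0$, together with Sobolev interpolation between the $L^{\infty}_t L^2_x$ and $L^2_t W^{1,2}_x$ estimates furnished by Lemma \ref{lem:est-galerkin}, one obtains the uniform bound for $\delta>0$ chosen sufficiently small.

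For the stochastic integral $J_3^n$, I would follow a Flandoli--G\k{a}tarek-type fractional regularity argument. By the Kunita/BDG inequality, for $p\in(1,2]$ and $0\le s\le t\le T_2$,
\[
\E\|J_3^n(t)-J_3^n(s)\|_{W^{-1,2}(D)}^p \le C_p \bigg(\E\int_s^t\!\int_{\pmb E}\|\eta(u_n(r);z)\|_{L^2(D)}^2\, m({\rm d}z)\,{\rm d}r\bigg)^{p/2} \le C\,|t-s|^{p/2},
\]
where the last inequality exploits assumption \ref{A3} (which yields $\|\eta(u;z)\|_{L^2(D)}^2 \le C\,{\tt h}^2(z)(1+\|u\|_{L^2(D)}^{2\theta})$), the fact that ${\tt h}\in L^2(\pmb E, m)$, and Lemma \ref{lem:est-galerkin} with $p=2$. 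Substituting into \eqref{norm def1} gives
\[
\int_0^{T_2}\!\int_0^{T_2}\frac{\E\|J_3^n(t)-J_3^n(s)\|_{W^{-1,2}(D)}^p}{|t-s|^{1+\alpha p}}\,{\rm d}t\,{\rm d}s \le C\int_0^{T_2}\!\int_0^{T_2}|t-s|^{p/2-1-\alpha p}\,{\rm d}t\,{\rm d}s,
\]
and the right-hand side is finite exactly when $\alpha<1/2$, yielding the claimed uniform bound.

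The main technical obstacle is the handling of the logarithmic nonlinearity $u_n\log|u_n|$ in $W^{-1,2}(D)$: because the logarithm prevents a direct $L^p$ embedding, one must trade it for a small polynomial correction $|u_n|^{1+\delta}$ and then carefully interpolate between the uniform $L^{\infty}_t L^2_x$ and $L^2_t W^{1,2}_x$ bounds of Lemma \ref{lem:est-galerkin} to produce a constant independent of $n$.
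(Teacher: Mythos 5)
Your proof uses the same decomposition as the paper and treats the stochastic integral $J_3^n$ identically: a maximal/BDG-type inequality, the sublinear growth of $\eta$ and Lemma~\ref{lem:est-galerkin} give $\mathbb{E}\|J_3^n(t)-J_3^n(s)\|_{W^{-1,2}(D)}^p\le C|t-s|^{p/2}$, hence the Gagliardo double integral is finite precisely for $\alpha<\tfrac12$. For the two drift terms the paper simply cites the analogous lemma from Shang--Zhang, whereas you supply the argument directly via the embedding $W^{1,p}\hookrightarrow W^{\alpha,p}$, the pointwise bound $|y\log|y||\le C_\delta(1+|y|^{1+\delta})$, and Gagliardo--Nirenberg interpolation between the $L^\infty_tL^2_x$ and $L^2_tW^{1,2}_x$ bounds; this is a correct, slightly more explicit route to the same estimate.
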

\begin{proof}
From Lemma \ref{lem:est-galerkin}, we see that 
$$ \sup_{n\in \mathbb{N}}\mathbb{E}\Big[ \int_0^{T_2} \|u_n(s)\|_{W_0^{1,2}(D)}^2\,{\rm d}s\Big] \le C_{2,\theta} \big(1 + \|u_0\|_{L^2(D)}^\frac{4}{1+\theta}\big) < +\infty.$$
Hence, thanks to the Sobolev embedding $W_0^{1,2}(D)\hookrightarrow W^{-1, 2}(D)$, Young's inequality and the above estimate, we get, for $1<p<2$,
\begin{align*}
& \mathbb{E}\Big[\int_0^{T_2} \|u_n(s)\|_{W^{-1,2}(D)}^p\,{\rm d}s\Big] \le C \mathbb{E}\Big[\int_0^{T_2} \|u_n(s)\|_{W^{1,2}_0(D)}^p\,{\rm d}s\Big] \le C \Big\{ 1+ \mathbb{E}\Big[\int_0^{T_2} \|u_n(s)\|_{W_0^{1,2}(D)}^2\,{\rm d}s\Big]\Big\} \le C\,.
\end{align*}
Note that $u_n(\cdot)$ satisfies the following integral equation. 
        \begin{align*}
        u_n(t) & = u_n(0) + \int_{0}^{t} \Delta u_n(s) \, {\rm d}s + \int_{0}^{t} P_n[u_n(s) \log |u_n(s)|]\, {\rm d}s + \int_{0}^{t}  \int_{\pmb{E}} P_n[\eta(u_n(s);z)] \,\widetilde{N}({\rm d}z,{\rm d}s)\notag \\
        &\equiv \sum_{i=0}^3\mathcal{K}_i^n(t).
    \end{align*}
W.L.O.G.\,, we assume that $s < t$. $\mathcal{K}_{0}^{n}(\cdot)$ fulfills the estimation \eqref{norm def1} for any  $\alpha \in (0,\frac{1}{2})$ as it is independent of time. Following the line of argument as invoked in the proof of \cite[Lemma $5.2$]{Shang-2022}, we get
\begin{align*}
  \mathbb{E} \left[\int_{0}^{T_2}\int_{0}^{T_2} \frac{\left \|\mathcal{K}_{i}^n(t) - \mathcal{K}_{i}^n(s)   \right \|_{W^{-1,2}(D)}^{2}}{\left | t-s \right |^{1+ \alpha p}} \, {\rm d}t \, {\rm d}s\right] \leq C_i\quad \text{for}~~i=1,2\,. 
\end{align*}
Using the assumption \ref{A2}, the maximal inequality \cite{Liu-2019} and the uniform estimate \eqref{esti:uniform-galerkin}, we have
\begin{align*}
     &\mathbb{E}\Big[ \| \mathcal{K}_{3}^n(t) -\mathcal{K}_{3}^n(s)\|_{W^{-1,2}(D)}^{p}\Big]    = \mathbb{E} \left \| \int_{s}^{t} \int_{\pmb{E}} P_n[\eta(u_n(r);z)]  \widetilde{N}({\rm d}z,{\rm d}r)  \right \|_{W^{-1,2}(D)}^{p} \\
     & \le C_p \, \mathbb{E} \Bigg[\bigg( \int_{s}^{t} \int_{\pmb{E}} \| \eta(u_n(r);z)\|_{L^2(D)}^{2}  m({\rm d}z){\rm d}r \bigg)^{\frac{p}{2}}\Bigg]
      \le C\,  \mathbb{E} \Bigg[\bigg( \int_{s}^{t} \big( 1+ \| u_n(r)\|_{L^2(D)}^{2 \theta}\big)  {\rm d}r \bigg)^{\frac{p}{2}}\Big] \notag \\
     & \le C\,  \mathbb{E} \bigg[ 1 + \underset{r \in [0,T_2]}\sup \| u_n(r) \|_{L^2(D)}^{2} \bigg] (t-s)^{\frac{p}{2}}.
\end{align*}
Hence 
\begin{align*}
  \mathbb{E} \left[\int_{0}^{T_2}\int_{0}^{T_2} \frac{\left \|\mathcal{K}_{3}^n(t) - \mathcal{K}_{3}^n(s)   \right \|_{W^{-1,2}(D)}^{2}}{\left | t-s \right |^{1+ \alpha p}} \, {\rm d}t \, {\rm d}s\right] \leq C\,. 
\end{align*}
We combine all the above estimates to arrive at the required assertion. 
\end{proof}
\subsection{ Tightness of the sequence $\{\mathcal{L}(u_n)\}$:} We now discuss the tightness of the law of the sequence $\{u_n\}$, denoted by $\{\mathcal{L}(u_n)\}$, on some appropriate spaces of c\`{a}dl\`{a}g functions. We first recall the following well-known lemma. 
\begin{lem} \cite[Theorem 2.1]{Flandoli-1995}
\label{lem:cpt}
Let $\mathbb{X} \subset \mathbb{Y} \subset \mathbb{X^*}$ be Banach spaces, $\mathbb{X}$ and $\mathbb{X^*}$ reflexive, with 
compact embedding of $\mathbb{X}$ in $\mathbb{Y}$. For any $q \in (1,\infty)$ and $\alpha \in (0,1)$, the  embedding of  $L^{q}([0,T];\mathbb{X}) \cap W^{\alpha,q}([0,T];\mathbb{X^*}) $  equipped with natural norm in $L^{q}([0,T];\mathbb{Y})$ is compact.
\end{lem}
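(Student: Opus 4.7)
The plan is to reduce the claim to the Kolmogorov--Riesz--Fr\'echet compactness criterion in $L^{q}([0,T];\mathbb{Y})$, combined with a Lions-type interpolation linking the three spaces $\mathbb{X} \subset \mathbb{Y} \subset \mathbb{X^*}$. The key preliminary is the inequality: for every $\epsilon > 0$ there exists $C_\epsilon > 0$ such that
\[ \|v\|_{\mathbb{Y}} \le \epsilon\, \|v\|_{\mathbb{X}} + C_\epsilon\, \|v\|_{\mathbb{X^*}} \qquad \text{for all } v \in \mathbb{X}, \]
which follows from the hypothesis that $\mathbb{X}$ embeds compactly into $\mathbb{Y}$ together with the continuous embedding $\mathbb{Y} \hookrightarrow \mathbb{X^*}$ by a standard contradiction argument.

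Next, let $\{u_n\}$ be a bounded sequence in $L^{q}([0,T];\mathbb{X}) \cap W^{\alpha,q}([0,T];\mathbb{X^*})$. Reflexivity of $\mathbb{X}$ and $\mathbb{X^*}$ (and hence of the two Bochner/Slobodeckij spaces) allows extraction of a subsequence (still denoted $\{u_n\}$) converging weakly in both component spaces to some $u$; by linearity I may reduce to $u = 0$ and aim for strong convergence in $L^{q}([0,T];\mathbb{Y})$. Raising the interpolation inequality to the $q$-th power and integrating in time, one obtains for every $h \in (0,T)$,
\[ \int_{0}^{T-h} \|u_n(t+h) - u_n(t)\|_{\mathbb{Y}}^{q}\, dt \le 2^{q-1}\epsilon^{q} \|u_n\|_{L^{q}(0,T;\mathbb{X})}^{q} + 2^{q-1} C_\epsilon^{q} \int_{0}^{T-h} \|u_n(t+h) - u_n(t)\|_{\mathbb{X^*}}^{q}\, dt. \]
The first term on the right is uniformly $O(\epsilon^{q})$ thanks to the $L^{q}(0,T;\mathbb{X})$ bound, and I would show that the second vanishes uniformly in $n$ as $h \downarrow 0$ by exploiting the finite Slobodeckij seminorm in $\mathbb{X^*}$ through a Fubini-type manipulation in the variables $(t,h)$. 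Choosing $\epsilon$ small first and then $h$ small yields the uniform $L^{q}$-equicontinuity of translations demanded by the Kolmogorov--Riesz--Fr\'echet criterion, and extracting a further subsequence finally delivers strong convergence in $L^{q}([0,T];\mathbb{Y})$.

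The step I expect to be the main obstacle is the uniform translation control in $\mathbb{X^*}$ extracted from the Slobodeckij seminorm: the defining double integral $\int_{0}^{T}\!\int_{0}^{T}\|u_n(t)-u_n(s)\|_{\mathbb{X^*}}^{q}/|t-s|^{1+\alpha q}\, dt\, ds$ does not directly bound the pointwise-in-$h$ quantity $\int_{0}^{T-h} \|u_n(t+h)-u_n(t)\|_{\mathbb{X^*}}^{q}\, dt$ since the latter need not be monotone in $h$. This is typically handled by a dyadic decomposition in the time-lag variable, or equivalently by accepting a slight loss from $\alpha$ to some $\alpha' < \alpha$; either way, one obtains $O(h^{\alpha' q})$ decay uniformly in $n$, which is sufficient to conclude.
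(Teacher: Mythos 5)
The paper does not actually give a proof of this lemma: it is quoted verbatim as Theorem~2.1 of Flandoli--G\c{a}tarek \cite{Flandoli-1995}, so there is no "paper's own proof" to compare against. Your proposal therefore has to be judged on its own merits, and on how it matches the standard argument behind the cited result.

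Your two main ingredients --- the Ehrling/Lions interpolation inequality $\|v\|_{\mathbb{Y}}\le\epsilon\|v\|_{\mathbb{X}}+C_\epsilon\|v\|_{\mathbb{X}^*}$, and the translation estimate in $\mathbb{X}^*$ extracted from the Slobodeckij seminorm (with the dyadic or $\alpha'<\alpha$ loss you flag, which is indeed the correct fix for the non-monotonicity of $h\mapsto\|\tau_h u-u\|_{L^q}$) --- are exactly the right ones, and they are also the core of the Flandoli--G\c{a}tarek / Simon approach. But there is a genuine gap in the last step. You write that boundedness in $L^q$ together with uniform equicontinuity of translations ``demanded by the Kolmogorov--Riesz--Fr\'echet criterion'' already yields relative compactness in $L^q([0,T];\mathbb{Y})$. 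That is false when $\mathbb{Y}$ is infinite-dimensional: take $\mathbb{Y}$ a Hilbert space with orthonormal basis $\{e_n\}$ and $u_n(t)\equiv e_n$; this sequence is bounded in $L^q([0,T];\mathbb{Y})$, has identically zero translation increments, and has no strongly convergent subsequence. The vector-valued compactness criterion (Simon, \emph{Compact sets in the space $L^p(0,T;B)$}, Thm.~1) requires a third, independent condition: for every $0\le t_1<t_2\le T$, the set $\bigl\{\int_{t_1}^{t_2}u_n(t)\,{\rm d}t : n\in\mathbb{N}\bigr\}$ must be relatively compact in $\mathbb{Y}$. Your sketch never verifies this, and without it the conclusion simply does not follow.

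The gap is fortunately easy to fill and this is precisely where the $L^q([0,T];\mathbb{X})$ bound (as opposed to the $W^{\alpha,q}$ bound) is used in an essential, non-interpolation role: by H\"older, $\bigl\|\int_{t_1}^{t_2}u_n\,{\rm d}t\bigr\|_{\mathbb{X}}\le (t_2-t_1)^{1-1/q}\|u_n\|_{L^q([0,T];\mathbb{X})}$ is bounded uniformly in $n$, and the compact embedding $\mathbb{X}\hookrightarrow\mathbb{Y}$ then makes this family of time-integrals relatively compact in $\mathbb{Y}$. You should add this step explicitly. I would also note that the preliminary reduction to a weakly null subsequence via reflexivity is a detour here: once Simon's criterion gives relative compactness, strong subsequential convergence follows directly, and the limit is identified afterwards; reflexivity of $\mathbb{X}$ and $\mathbb{X}^*$ in the hypotheses is not what makes the compactness argument work.
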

In view of Lemma \ref{lem:cpt} along with the estimations in Lemmas \ref{lem:est-galerkin} and \ref{lem:uniform-frac-galerkin}, one case use similar line of argument as invoked in \cite[Corollary 4.2]{Kavin-2024}  to conclude that $\left \{ \mathcal{L}(u_n) \right \}$ is tight on $L^{p}([0,T_2];L^{2}(D))$ for any $p\in (1,2)$; see also in \cite[Corollary $4.8$ ]{Kavin-2023LDP}.
\vspace{0.1cm}

To pass to the limit in the Galerkin approximations, tightness of $\left \{ \mathcal{L}(u_n) \right \}$ on the space $L^{p}([0,T_2];L^{2}(D))$ is not enough. We may require for example the tightness of $\left \{ \mathcal{L}(u_n) \right \}$ on the space 
of c\`{a}dl\`{a}g functions $u:[0,T_2]\goto W^{-1,2}(D)$ with extended Skorokhod topology \cite{Skorohod-56,Billingsley-99}. Taking motivation from \cite{Brzezniak-2013,Majee-2020, Kavin-2023nonlinear, Metivier-1988} and looking at the estimations in Lemma \ref{lem:est-galerkin}, we show the tightness of $\{\mathcal{L}(u_n)\}$ on the functional space
\begin{align*}
 \mathcal{Z}_{T_2}:= \mathbb{D}([0,T_2]; W^{-1,2}(D))\cap \mathbb{D}([0,T_2]; L^2_{w}(D))\cap L^2_w([0,T_2]; W^{1,2}(D)) \cap L^2([0,T_2]; L^2(D))
\end{align*}
  equipped with the supremum topology $\mathcal{T}_{T_2}$ ; see \cite{Majee-2023} for its notations. To proceed further, we recall the definition of Aldous condition. 

\begin{defi}[Aldous condition]\label{defi:aldous-condition}
Let  $(\mathbb{B},\|\cdot\|_{\mathbb{B}})$ be a separable Banach space. We say that a sequence of $\{\mathcal{F}_t\}_{0\le t\le T}$-adapted, $\mathbb{B}$-valued c\`{a}dl\`{a}g processes $({\tt Z}_n)_{n\in \mathbb{N}}$ satisfy the {\it Aldous condition}
if for every $\delta,~m>0$, there exists $\eps>0$ such that
\begin{align*}
 \sup_{n\in \mathbb{N}}\sup_{0< \beta \le \eps} \mathbb{P}\big\{\|{\tt Z}_n(\kappa_n + \beta)-{\tt Z}_n(\kappa_n)\|_{\mathbb{B}}\ge m\big\} \le \delta\,
\end{align*} 
holds for every $\{\mathcal{F}_t\}$-adapted sequence of stopping times $(\kappa_n)_{n\in \mathbb{N}}$  with 
$\kappa_n\le T$.
\end{defi}
 We mention a sufficient condition to satisfy Aldous condition; cf.~ \cite[Lemma 9]{Motyl-2013}. 
\begin{lem}
 The sequence $({\tt Z}_n)_{n\in \mathbb{N}}$ as mentioned in Definition \ref{defi:aldous-condition} be a sequence 
 of c\`{a}dl\`{a}g stochastic processes satisfies the Aldous condition if there exist positive constants $\alpha,\, \zeta$ and $C$ such that for any $\beta >0$,
\begin{align}
 \mathbb{E}\Big[\|{\tt Z}_n(\kappa_n + \beta)-{\tt Z}_n(\kappa_n)\|^\alpha_{\mathbb{B}}\Big] \le C \beta^\zeta \label{inq:Aldous-condition}
\end{align}
holds for every sequence of $\{\mathcal{F}_t\}$-stopping times $(\kappa_n)$ with $\kappa_n\le T$.
\end{lem}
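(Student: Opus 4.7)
The plan is to proceed by a direct application of Markov's inequality, which reduces the statement to elementary estimation. The hypothesis controls a moment of order $\alpha$ of the increment $Z_n(\kappa_n+\beta)-Z_n(\kappa_n)$ uniformly over stopping times $\kappa_n \le T$ and the index $n$, and the Aldous condition only asks for a probabilistic tail bound on the same increment, so a single application of Chebyshev/Markov will convert between the two.

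More precisely, I would fix $\delta,m>0$ and for any $\{\mathcal F_t\}$-stopping time $\kappa_n\le T$ and any $\beta>0$ apply Markov's inequality in the form
\begin{equation*}
\mathbb{P}\big\{\|{\tt Z}_n(\kappa_n+\beta)-{\tt Z}_n(\kappa_n)\|_{\mathbb{B}}\ge m\big\}
\;\le\; \frac{1}{m^{\alpha}}\,\mathbb{E}\Big[\|{\tt Z}_n(\kappa_n+\beta)-{\tt Z}_n(\kappa_n)\|_{\mathbb{B}}^{\alpha}\Big].
\end{equation*}
Using the hypothesis \eqref{inq:Aldous-condition} then yields the uniform bound $\mathbb{P}\{\|{\tt Z}_n(\kappa_n+\beta)-{\tt Z}_n(\kappa_n)\|_{\mathbb{B}}\ge m\}\le C\beta^{\zeta}/m^{\alpha}$, in which the right-hand side is independent of $n$ and of the stopping time $\kappa_n$.

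Finally I would choose the threshold $\varepsilon>0$ small enough that $C\varepsilon^{\zeta}/m^{\alpha}\le\delta$, for instance $\varepsilon:=\big(\delta m^{\alpha}/C\big)^{1/\zeta}$. Since the estimate above is monotone in $\beta$, taking the supremum over $0<\beta\le\varepsilon$ and over $n\in\mathbb{N}$ gives
\begin{equation*}
\sup_{n\in\mathbb{N}}\sup_{0<\beta\le\varepsilon}\mathbb{P}\big\{\|{\tt Z}_n(\kappa_n+\beta)-{\tt Z}_n(\kappa_n)\|_{\mathbb{B}}\ge m\big\}\;\le\;\delta,
\end{equation*}
which is precisely the Aldous condition in Definition \ref{defi:aldous-condition}. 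There is no serious obstacle in this argument; the only thing to verify carefully is that $\kappa_n+\beta$ is indeed an admissible time at which $Z_n$ can be evaluated (it is bounded by $T+\varepsilon$, which is harmless since the processes are defined on a slightly larger interval, or else one replaces $\kappa_n+\beta$ by $(\kappa_n+\beta)\wedge T$ in the usual way), and that the hypothesis is applied to stopping times rather than deterministic times, which is exactly how \eqref{inq:Aldous-condition} is stated.
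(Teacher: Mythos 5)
Your proof is correct and is exactly the standard argument; the paper itself states the lemma without proof, merely citing \cite[Lemma 9]{Motyl-2013}, and the reference proves it by precisely this Markov/Chebyshev estimate with the same choice $\varepsilon=(\delta m^{\alpha}/C)^{1/\zeta}$. You also correctly flag the minor point about $\kappa_n+\beta$ possibly exceeding $T$ and the usual fix $(\kappa_n+\beta)\wedge T$, so nothing is missing.
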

Following the line arguments of \cite[Lemma $3.3$]{Brzezniak-2013}, \cite[Theorem $2$, Lemma $7$,~$\&$~Corollary $1$]{Motyl-2013}, we now state~(without proof) tightness criterion of the family of laws of some process in $\mathcal{Z}_{T_2}$.
\begin{thm}\label{thm:for-tightness}
 Let $({\tt X}_{n})_{n \in \mathbb{N}}$ be a sequence of $\mathbb{F}$-adapted, $W^{-1,2}(D)$-valued c\`{a}dl\`{a}g stochastic process such that 
  \begin{itemize}
  \item[(i)]
   $({\tt X}_{n})_{n \in \mathbb{N}}$ satisfies the Aldous condition in $W^{-1,2}(D)$,
 \item[(ii)] for some constant $C>0$, 
  \begin{align*}
   \sup_{n \in \mathbb{N}} \mathbb{E}\Big[ \sup_{t\in [0,T_2]}\|{\tt X}_{n}(t)\|_{L^2(D)}\Big] \le C, \quad 
   \sup_{n \in \mathbb{N}} \mathbb{E}\Big[\int_0^{T_2}\|{\tt X}_{n}(t)\|_{W_0^{1,2}(D)}^2\,{\rm d}t\Big] \le C\,.
  \end{align*}
  \end{itemize}
Then the sequence  $\big\{\mathcal{L}({\tt X}_{n})\big\}_{n \in \mathbb{N}}$ is tight on $(\mathcal{Z}_{T_2}, \mathcal{T}_{T_2})$ .
\end{thm}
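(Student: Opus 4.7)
The plan is to establish tightness on the intersection space $(\mathcal{Z}_{T_2},\mathcal{T}_{T_2})$ by producing, for each $\delta>0$, a set that is compact in each of the four factor spaces simultaneously. Since $\mathcal{T}_{T_2}$ is the supremum topology, a finite intersection of sets compact in each factor is compact in $\mathcal{Z}_{T_2}$, so it suffices to verify tightness of $\{\mathcal{L}({\tt X}_n)\}$ on each factor separately and then intersect the resulting compacts. I would handle the four factors in increasing order of difficulty.

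First, tightness on $L^2_w([0,T_2];W^{1,2}(D))$ follows immediately from hypothesis (ii): by Markov's inequality the balls $B_R:=\{u:\int_0^{T_2}\|u(t)\|_{W_0^{1,2}(D)}^2\,{\rm d}t\le R\}$ satisfy $\mathbb{P}({\tt X}_n\notin B_R)\le C/R$, and by reflexivity of $L^2([0,T_2];W^{1,2}(D))$ each $B_R$ is weakly compact. Second, for tightness on $\mathbb{D}([0,T_2];W^{-1,2}(D))$ I would invoke Aldous' theorem in Skorokhod spaces: the compact embedding $L^2(D)\hookrightarrow W^{-1,2}(D)$ combined with the uniform bound $\sup_n\mathbb{E}[\sup_{t\le T_2}\|{\tt X}_n(t)\|_{L^2(D)}]\le C$ yields that $\{\mathcal{L}({\tt X}_n(t))\}_{n}$ is tight in $W^{-1,2}(D)$ uniformly in $t$, and combining this with hypothesis (i) gives tightness on $\mathbb{D}([0,T_2];W^{-1,2}(D))$ via the standard Aldous–Skorokhod criterion. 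Third, tightness on $\mathbb{D}([0,T_2];L^2_w(D))$ is obtained similarly by restricting to paths valued in a ball of $L^2(D)$ (weakly compact by Banach–Alaoglu), combined with the Aldous condition tested against a countable dense subset of $L^2(D)$; this is the weak analogue of the previous step and follows the argument in \cite{Motyl-2013}.

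The crucial step is tightness on $L^2([0,T_2];L^2(D))$, which is the only factor involving a strong norm on functions of $(t,x)$. Here the key tool is a deterministic compactness result of Dubinskii/Simon/Lions–Aubin type: a subset of $L^2([0,T_2];L^2(D))$ that is bounded in $L^2([0,T_2];W_0^{1,2}(D))$ and has uniformly small $W^{-1,2}$-modulus of time-translation is relatively compact in $L^2([0,T_2];L^2(D))$. The Aldous condition (i), upgraded through a standard stopping time argument to the deterministic equicontinuity of trajectories in $W^{-1,2}$ on a set of large probability, together with hypothesis (ii), supplies exactly these two ingredients. By Markov together with the Aldous condition one constructs, for given $\delta>0$, a set $K_\delta\subset L^2([0,T_2];L^2(D))$ of probability at least $1-\delta$ for every $n$ that is relatively compact by the said embedding.

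Finally, intersecting the four compact (respectively relatively compact) sets produced above and taking closures in $\mathcal{Z}_{T_2}$, we obtain a set compact in $\mathcal{T}_{T_2}$ whose complement has $\{\mathcal{L}({\tt X}_n)\}$-measure less than $4\delta$ uniformly in $n$, which is the desired tightness. I expect the main obstacle to be the $L^2([0,T_2];L^2(D))$ step, both because it requires invoking the correct version of the Aubin–Dubinskii compactness embedding in a stochastic framework and because passing from the Aldous condition (which is stated in probability along stopping times) to the deterministic equicontinuity needed by the compactness criterion requires a slightly delicate argument following \cite[Lemma 7, Corollary 1]{Motyl-2013}; all other steps are essentially bookkeeping using reflexivity and compact embeddings of the domain.
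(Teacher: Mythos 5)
The paper states Theorem~\ref{thm:for-tightness} explicitly \emph{without} proof, citing \cite[Lemma 3.3]{Brzezniak-2013} and \cite[Theorem 2, Lemma 7, Corollary 1]{Motyl-2013}, so there is no internal proof to compare against. Your outline does track the argument in those references: uniform $L^\infty_t L^2_x$ and $L^2_t W^{1,2}_x$ bounds feed the Banach--Alaoglu/reflexivity step for the two weak factors, the Aldous condition feeds the Skorokhod factors, and the Aubin--Lions (Dubinski\u{\i}/Simon) compactness gives the strong $L^2([0,T_2];L^2(D))$ factor; you are also right that the last factor is the delicate one. You correctly identify the Motyl-style upgrade from the probabilistic Aldous condition to a deterministic modulus of continuity on a large-probability set as the bridge.

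One point that deserves to be made precise, because as literally stated it is not true: ``a finite intersection of sets compact in each factor is compact in the supremum topology'' does not hold if each $K_i$ is compact only in \emph{its own} factor topology $\tau_i$, since $K_2\cap K_3\cap K_4$ need not be $\tau_1$-closed, so $\bigcap_i K_i$ is not obviously $\tau_1$-compact, let alone compact in $\sup_i\tau_i$. What the references actually prove (Motyl's Lemma 7 and Corollary 1) is a \emph{single} deterministic compactness criterion: a set $K$ satisfying simultaneously (a) a uniform $\sup_t\|\cdot\|_{L^2}$ bound, (b) a uniform $\int_0^T\|\cdot\|_{W^{1,2}_0}^2$ bound, and (c) a vanishing $W^{-1,2}$-modulus of time regularity, is relatively compact in $\mathcal{Z}_{T_2}$ with the supremum topology. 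The proof extracts a sequence converging in each factor and identifies all the limits through the common coarser Hausdorff topology of, say, $W^{-1,2}$-valued distributions; this identification step is what makes the diagonal extraction close in $\sup_i\tau_i$, and is the part your write-up leaves tacit. Your ``intersection of four compacts'' picture is salvageable once you observe that (a)--(c) define one set and that limits in the four topologies must coincide, but it should be phrased that way rather than as a general fact about supremum topologies.
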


\begin{lem}\label{lem:tightness}
 The sequence $\big\{\mathcal{L}(u_n)\big\}_{n \in \mathbb{N}}$ is tight on $(\mathcal{Z}_{T_2}, \mathcal{T}_{T_2})$. 
\end{lem}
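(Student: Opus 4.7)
The plan is to invoke Theorem \ref{thm:for-tightness}. Condition (ii) is immediate from Lemma \ref{lem:est-galerkin} applied with $p=2$: we get
\begin{align*}
\sup_{n}\mathbb{E}\Big[\sup_{t\in[0,T_2]}\|u_n(t)\|_{L^2(D)}^2\Big] + \sup_{n}\mathbb{E}\Big[\int_0^{T_2}\|u_n(t)\|_{W_0^{1,2}(D)}^2\,{\rm d}t\Big] < \infty.
\end{align*}
So the only nontrivial task is to verify the Aldous condition in $W^{-1,2}(D)$ by establishing the sufficient inequality \eqref{inq:Aldous-condition}. Fix an $\mathbb{F}$-stopping time sequence $(\kappa_n)$ with $\kappa_n\le T_2$ and $\beta>0$. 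Using the Galerkin equation \eqref{eq:finite-dim}, decompose
\begin{align*}
u_n(\kappa_n+\beta)-u_n(\kappa_n) = J_1^n + J_2^n + J_3^n,
\end{align*}
where $J_1^n$, $J_2^n$, $J_3^n$ correspond respectively to the Laplacian term, the logarithmic drift, and the compensated Poisson stochastic integral over $[\kappa_n,\kappa_n+\beta]$.

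For $J_1^n$, since $\|\Delta v\|_{W^{-1,2}(D)} \le C\|v\|_{W_0^{1,2}(D)}$, Cauchy-Schwarz in time gives $\|J_1^n\|_{W^{-1,2}(D)}\le \beta^{1/2}\big(\int_0^{T_2}\|u_n(s)\|_{W_0^{1,2}(D)}^2\,{\rm d}s\big)^{1/2}$, and Lemma \ref{lem:est-galerkin} yields $\mathbb{E}[\|J_1^n\|_{W^{-1,2}(D)}^2]\le C\beta$. For $J_3^n$, the maximal inequality of \cite{Liu-2019} combined with the sub-linear growth assumption \ref{A3} and the uniform $L^\infty([0,T_2];L^2(D))$ bound gives
\begin{align*}
\mathbb{E}\big[\|J_3^n\|_{W^{-1,2}(D)}^2\big] \le C\,\mathbb{E}\Big[\int_{\kappa_n}^{\kappa_n+\beta}\!\!\int_{\pmb{E}}\|\eta(u_n(s);z)\|_{L^2(D)}^2\,m({\rm d}z)\,{\rm d}s\Big]\le C\beta\Big(1+\mathbb{E}\big[\sup_{s\in[0,T_2]}\|u_n(s)\|_{L^2(D)}^{2\theta}\big]\Big) \le C\beta.
\end{align*}

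The main delicacy is the logarithmic term $J_2^n$, since $u\log|u|$ is neither of linear growth nor locally Lipschitz. The plan mirrors the treatment in Lemma \ref{lem:uniform-frac-galerkin}: choose a small $\epsilon>0$ with $1+\epsilon\le 2$ and an exponent $q^*\in(1,2)$ for which the Sobolev embedding $L^{q^*}(D)\hookrightarrow W^{-1,2}(D)$ holds; using the elementary bound $|a\log|a||\le C_\epsilon(1+|a|^{1+\epsilon})$ pointwise and the contraction property of $P_n$ on $L^2(D)$, one obtains
\begin{align*}
\|J_2^n\|_{W^{-1,2}(D)}\le C\int_{\kappa_n}^{\kappa_n+\beta}\big(1+\|u_n(s)\|_{L^2(D)}^{1+\epsilon}\big)\,{\rm d}s \le C\beta\Big(1+\sup_{s\in[0,T_2]}\|u_n(s)\|_{L^2(D)}^{2}\Big),
\end{align*}
so that $\mathbb{E}[\|J_2^n\|_{W^{-1,2}(D)}]\le C\beta$. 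Collecting these three estimates yields a bound of the form $\mathbb{E}[\|u_n(\kappa_n+\beta)-u_n(\kappa_n)\|_{W^{-1,2}(D)}^{\alpha}]\le C\beta^{\zeta}$ for suitable $\alpha,\zeta>0$ (for instance $\alpha=1$ and $\zeta=\tfrac{1}{2}$), which is exactly \eqref{inq:Aldous-condition}. Hence the Aldous condition holds and Theorem \ref{thm:for-tightness} applies, giving tightness of $\{\mathcal{L}(u_n)\}$ on $(\mathcal{Z}_{T_2},\mathcal{T}_{T_2})$. The hardest step is the control of $J_2^n$; once the Sobolev embedding trick is in place, the rest is routine.
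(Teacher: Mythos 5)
Your proposal follows the same route as the paper: invoke Theorem \ref{thm:for-tightness}, note that condition (ii) is the $p=2$ case of Lemma \ref{lem:est-galerkin}, and verify the Aldous sufficient inequality \eqref{inq:Aldous-condition} by splitting $u_n$ according to the Galerkin equation \eqref{eq:finite-dim} and estimating each piece. The Laplacian term gives $\beta^{1/2}$ via Cauchy--Schwarz in time, the stochastic integral gives $\beta$ via the It\^{o}--L\'{e}vy isometry combined with \ref{A3}, and the logarithmic drift is handled exactly as in the paper via the embedding $L^{q^*}(D)\hookrightarrow W^{-1,2}(D)$ together with $|a\log|a||\le C(\epsilon)(1+|a|^{1+\epsilon})$ and the constraints $(1+\epsilon)\le 2$ and $(1+\epsilon)q^*\le 2$ (the latter is implicit in your display but should be stated). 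Two small points of hygiene: first, you bound the $P_n$-projected log term by appealing to "the contraction property of $P_n$ on $L^2(D)$", but what is actually needed is uniform boundedness of $P_n$ on $W^{-1,2}(D)$ (or $L^{q^*}$), which holds here because $P_n$ is the spectral projection for $-\Delta$; second, your three estimates come with different moment exponents ($\alpha=2$ for $J_1^n, J_3^n$, $\alpha=1$ for $J_2^n$), so the final sentence ``collecting these yields $\alpha=1,\zeta=\tfrac12$'' glosses over a Cauchy--Schwarz normalization; it is cleaner, as the paper does, to observe that each summand separately satisfies \eqref{inq:Aldous-condition} and conclude from there. Neither point is a genuine gap.
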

\begin{proof} We follow the same line of argument as invoked in \cite{Majee-2020}. By Lemma \ref{lem:est-galerkin}, the assertion ${\rm (ii)}$ of Theorem \ref{thm:for-tightness} holds true for the sequences $\{u_n\}_{n \in \mathbb{N}}$. Thus, it remains to show that $\{u_n\}_{n \in \mathbb{N}}$ satisfies the Aldous condition in $W^{-1,2}(D)$. Let $\big\{\kappa_m \big\}_{m \in \mathbb{N}}$ be a sequence of stopping times such that $0 \le \kappa_m \le T_2$.  By \eqref{eq:finite-dim}, we have 
\begin{align}
 u_n(t)&= P_n u_0 + \int_0^t  \Delta u_n(s) \,{\rm d}s + \int_0^t P_n\left[u_n(s) \log \left|u_n(s)\right|\right] \,{\rm d}s + \int_0^t \int_{\pmb{E}} P_n [\eta(u_n(s);z)] \widetilde{N}({\rm d}z,{\rm d}s)  \notag \\
  &\equiv  {\tt T}_1^{n}(t) + {\tt T}_2^{n}(t) + {\tt T}_3^{n}(t) + {\tt T}_4^{n}(t)\,. \label{eq:time-cont-p-laplace}
\end{align}
We show that each term in \eqref{eq:time-cont-p-laplace} satisfies \eqref{inq:Aldous-condition} for certain choices of
$\alpha$ and $\zeta$. Clearly, $ {\tt T}_1^{n}(\cdot) $ satisfies \eqref{inq:Aldous-condition} for any $\alpha, \zeta$.
For ${\tt T}_2^{n}(t)$, we have, for any  $\beta >0$
\begin{align*}
& \mathbb{E}\Big[  \big\| {\tt T}_2^{n}(\kappa_m + \beta)-{\tt T}_2^{n}(\kappa_m)\big\|_{W^{-1,2}(D)}\Big] 
 \le  C  \mathbb{E}\Big[ \int_{\kappa_m}^{\kappa_m + \beta} \|u_n(s) \big\|_{W_{0}^{1,2}(D)}\,{\rm d}s\Big] \notag \\
 & \le C \beta^{\frac{1}{2}} \Big( \mathbb{E}\Big[ \int_0^{T_2} \|u_n(s)\|_{W_{0}^{1,2}(D)}^2\,{\rm d}s\Big] \Big)^{\frac{1}{2}} \le C  \beta^{\frac{1}{2}}\,,
\end{align*}
where $(\kappa_m)$ is a sequence of $\mathbb{F}$- stopping times  with $\kappa_m \le T_2$.
Let us recall the following embedding: 
\begin{align}\label{embedding-1}
L^{q^*}(D)\hookrightarrow W^{-1,2}(D),~\text{where}~q^* \in \begin{cases} [\frac{2d}{d+2},2)~~\text{if}~d>2\,, \\
(1,2)~~\text{if}~d=1,2\,.
\end{cases}
\end{align}
Note that, for any given $\epsilon >0$, there exists a constant $C(\epsilon)$ such that for any $a\ge 0$
\begin{align}
\big|a \log |a|\big| \le C(\epsilon) \big(1+ a^{1+\epsilon} \big)\,. \label{inq:important-log}
\end{align}
 Using \eqref{embedding-1}, \eqref{inq:important-log} with $\epsilon >0$ satisfying $(1+\eps)\le 2$ and $(1+\eps)q^*\le 2$, and H\"{o}lder's inequality,  we have 
\begin{align*}
& \mathbb{E}\Big[  \big\| {\tt T}_3^{n}(\kappa_m + \beta)-{\tt T}_3^{n}(\kappa_m)\big\|_{W^{-1,2}(D)}\Big] 
 \le \mathbb{E}\Big[\int_{\kappa_m}^{\kappa_m + \beta}\big\| u_n(s) \log |u_n(s)| \big\|_{L^{q^*}(D)}\,{\rm d}s\Big] \\
 & \le C\,\mathbb{E}\Big[ \int_{\kappa_m}^{\kappa_m + \beta} \Big( \int_{D}\{ 1 + |u_n(s)|^{(1+\epsilon)q^*}\}\,{\rm d}x\Big)^\frac{1}{q^*}\,{\rm d}s\Big] 
  \le C\, \mathbb{E}\Big[\int_{\kappa_m}^{\kappa_m + \beta} \Big( 1 + \Big\{ \int_D |u_n(s)|^2\,{\rm d}x\Big\}^{1+\eps}\Big)\,{\rm d}s\Big] \notag \\
 & \le C\, \Big( 1 + \sup_n\mathbb{E}\Big[\sup_{t\in [0,T_2]}\|u_n(t)\|_{L^2(D)}^2\Big]\Big)\beta\,.
\end{align*}
Thus, \eqref{inq:Aldous-condition} is satisfied by  ${\tt T}_3^{n}(t)$ for $\alpha=1$ and $\zeta=1$.
Furthermore, by using the embedding  $W_0^{1,2}\hookrightarrow L^2(D)\hookrightarrow W^{-1,2}(D)$, It\^{o}-L\'{e}vy isometry and the assumption \ref{A3}, we obtain
\begin{align*}
 & \mathbb{E}\Big[\big\| {\tt T}_4^{n}(\kappa_m +\beta)-{\tt T}_4^{n}(\kappa_m)\big\|_{W^{-1,2}}^2\Big] 
  \le C \mathbb{E}\Big[\Big\|  \int_{\kappa_m}^{\kappa_m + \beta} \int_{\pmb{E}}  P_n [\eta(u_n;z)] \widetilde{N}({\rm d}z,{\rm d}s)\Big\|_{L^2(D)}^2\Big] \\
 & \le C\, \mathbb{E}\Big[ \int_{\kappa_m}^{\kappa_m + \beta} \int_{\pmb{E}} \big\|  P_n [\eta(u_n;z)]\big\|_{L^2(D)}^2 \,m({\rm d}z)\,{\rm d}s\Big]  \le C\, \mathbb{E}\Big[ \int_{\kappa_m}^{\kappa_m + \beta}\big( 1+ \| u_n(s)\|_{L^2(D)}^{2 \theta}\big)\,{\rm d}s\Big] \notag \\
 &\le C\, \Big( 1 + \sup_n\mathbb{E}\Big[\sup_{t\in [0,T_2]}\|u_n(t)\|_{L^2(D)}^2\Big]\Big)\beta\,.
\end{align*}
This shows that ${\tt T}_4^{\kappa}(t)$ satisfies \eqref{inq:Aldous-condition} for $\alpha=2$ and $\zeta=1$.
Thus, the sequence $\big\{\mathcal{L}(u_n)\big\}_{n \in \mathbb{N}}$ is tight on $(\mathcal{Z}_{T_2}, \mathcal{T}_{T_2})$.
\end{proof}

Let $M_{\bar{\mathbb{N}}}(\pmb{E} \times [0,T])$ be set of all $\bar{\mathbb{N}}:=\mathbb{N}\cup \{ \infty\}$-valued measures on $(\pmb{E} \times[0,T],\mathcal{B}(\pmb{E}\times[0,T]))$ endowed with the $\sigma$-field $\mathcal{M}_{\bar{\mathbb{N}}}(\pmb{E} \times [0,T])$ generated by the projection maps 
$i_B: M_{\bar{\mathbb{N}}}(\pmb{E} \times [0,T])\ni \mu\mapsto \mu(B)\in \bar{\mathbb{N}}\quad \forall B \in \mathcal{B}(\pmb{E} \times [0,T])$. Observe that $M_{\bar{\mathbb{N}}}(\pmb{E} \times [0,T])$ is a separable metric space. For $n>0$, define $N_{n}({\rm d}z,{\rm d}t):=N({\rm d}z,{\rm d}t)$. Then, by \cite[Theorem 3.2]{Parthasarathy1967}, we arrive at the following conclusion.
\begin{lem} \label{lem:tightness-levy}
The laws of the family  $\{ N_{n}({\rm d}z,{\rm d}t)\}$ is tight on $M_{\bar{\mathbb{N}}}(\pmb{E} \times [0,T_2])$.
\end{lem}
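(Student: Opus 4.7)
\medskip

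\noindent\textbf{Proof proposal.} The plan is to exploit the fact that the family $\{N_n\}_{n\in\mathbb{N}}$ is stationary in $n$: by the very definition $N_n({\rm d}z,{\rm d}t)=N({\rm d}z,{\rm d}t)$ for every $n\in\mathbb{N}$, so the family of laws $\{\mathcal{L}(N_n)\}_{n\in\mathbb{N}}$ is the singleton $\{\mathcal{L}(N)\}$ on $M_{\bar{\mathbb{N}}}(\pmb{E}\times [0,T_2])$. Consequently, tightness of the family reduces to tightness of one probability measure on a single underlying measurable space; the issue is purely topological and there is no asymptotic compactness to verify, unlike in Lemma~\ref{lem:tightness}.

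\medskip

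\noindent First I would record that $M_{\bar{\mathbb{N}}}(\pmb{E}\times[0,T_2])$, endowed with the $\sigma$-field $\mathcal{M}_{\bar{\mathbb{N}}}(\pmb{E}\times[0,T_2])$ generated by the evaluation maps $i_{B}$, is a separable metric space (in fact a Polish space when the relevant vague/weak topology is fixed); this is the standard framework recalled in the excerpt just before the statement, and it is precisely the setting in which \cite{Parthasarathy1967} is invoked. Since $\pmb{E}$ is $\sigma$-finite, we may choose an increasing sequence of measurable sets $\pmb{E}_k\uparrow \pmb{E}$ with $m(\pmb{E}_k)<\infty$ for each $k$, so that $N(\pmb{E}_k\times [0,T_2])$ is a Poisson random variable with finite mean $T_2\, m(\pmb{E}_k)$.

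\medskip

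\noindent Next I would apply the abstract tightness result of Parthasarathy \cite[Theorem 3.2]{Parthasarathy1967}, which asserts that every finite Borel measure on a Polish space is tight (Ulam's theorem). Applied to the single probability measure $\mathcal{L}(N)$ on $M_{\bar{\mathbb{N}}}(\pmb{E}\times[0,T_2])$ this immediately produces, for every $\delta>0$, a compact set $\mathcal{K}_\delta\subset M_{\bar{\mathbb{N}}}(\pmb{E}\times[0,T_2])$ with $\mathbb{P}(N\in\mathcal{K}_\delta)\ge 1-\delta$. Since $N_n\equiv N$ in law, the same $\mathcal{K}_\delta$ works uniformly in $n\in\mathbb{N}$, which is exactly the definition of tightness of $\{\mathcal{L}(N_n)\}_{n\in\mathbb{N}}$.

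\medskip

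\noindent As a more hands-on alternative (in case one prefers not to quote Ulam's theorem in this abstract form) I would construct $\mathcal{K}_\delta$ explicitly: for each $k\in\mathbb{N}$ choose $M_k\in\mathbb{N}$ large enough that, by Markov's inequality,
\begin{equation*}
\mathbb{P}\bigl(N(\pmb{E}_k\times[0,T_2])>M_k\bigr)\le \frac{T_2\, m(\pmb{E}_k)}{M_k}\le \delta\, 2^{-k},
\end{equation*}
and then set
\begin{equation*}
\mathcal{K}_\delta:=\bigl\{\mu\in M_{\bar{\mathbb{N}}}(\pmb{E}\times[0,T_2]):\; \mu(\pmb{E}_k\times[0,T_2])\le M_k\ \text{for every }k\in\mathbb{N}\bigr\}.
\end{equation*}
The set $\mathcal{K}_\delta$ is relatively compact in $M_{\bar{\mathbb{N}}}(\pmb{E}\times[0,T_2])$ by the standard characterisation of relative compactness for measure-valued families with uniformly bounded mass on compacts (this is where the topology of $M_{\bar{\mathbb{N}}}$ is used), and a union bound gives $\mathbb{P}(N\in\mathcal{K}_\delta)\ge 1-\delta$. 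The only mildly technical point—and the one I expect to be the main obstacle—is to pin down the precise topology on $M_{\bar{\mathbb{N}}}(\pmb{E}\times[0,T_2])$ with respect to which $\mathcal{K}_\delta$ is relatively compact; once this is in place (either through Prokhorov-type arguments for counting measures or via the explicit reference to \cite{Parthasarathy1967}), the conclusion follows at once.
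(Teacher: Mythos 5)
Your proposal is correct and takes essentially the same route as the paper: since $N_n\equiv N$ the family of laws is a single measure, and the paper (like you) disposes of tightness with the bare citation to Parthasarathy's Theorem~3.2, i.e.\ Ulam's theorem on the tightness of a finite Borel measure on a Polish space. Your explicit construction of $\mathcal{K}_\delta$ via Markov's inequality is a reasonable elementary variant, but it is not part of the paper's argument and is not needed once the citation is invoked.
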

\subsection{Existence of weak solution of \eqref{eq:log-nonlinear} on the interval $[0,T_2]$}\label{subsec:existence-weak-local}
In this subsection, we show the existence of a weak solution of the underlying problem \eqref{eq:log-nonlinear} on the interval $[0,T_2]$. 
\vspace{0.2cm}

Thanks to Lemmas \ref{lem:tightness}-\ref{lem:tightness-levy}, we infer that the family of the laws of sequence $\big\{(u_n, N_{n}): n \in \mathbb{N}\big\}$ is tight on $\mathcal{X}_{T_2}$, where the space $\mathcal{X}_{T_2}$ is defined as 
$$ \mathcal{X}_{T_2}:=\big[ L^{p}([0,T_2];L^{2}(D)) \cap \mathcal{Z}_{T_2} \big] \times  M_{\bar{\mathbb{N}}}(\pmb{E} \times [0,T]), \quad p \in (1,2).$$ We then apply Jakubowski's version of Skorokhod theorem together with \cite[Corollary 2]{Motyl-2013}, and \cite[Theorem $D1$]{Erika-2009} to arrive at the following proposition. 
\begin{prop}\label{prop:skorokhod-representation}
There exist a new probability space $(\bar{\Omega}, \bar{\mathcal{F}}, \bar{\mathbb{P}})$, a subsequence of $\{n\}$, still denoted by $\{n\}$, and $\mathcal{X}_{T_2}$-valued 
random variables $(u_{n}^*, N_{n}^*)$ and $(u_*,N_*)$ defined on  $(\bar{\Omega}, \bar{\mathcal{F}}, \bar{\mathbb{P}})$ satisfying the followings:
\begin{itemize}
 \item [a).] $\mathcal{L}({u}_{n}^*,N_{n}^*)=\mathcal{L}(u_n,N_{n})$ for all $n \in \mathbb{N}$,
 \item [b).] $({u}_{n}^*, N_{n}^*)\goto ({u}_*, N_*)$ in $\mathcal{X}_{T_2}\quad \bar{\mathbb{P}}$-a.s. as $n \goto \infty$,
 \item [c).] $N_{n}^*(\bar{\omega})= N_*(\bar{\omega})$ for all $\bar{\omega}\in \bar{\Omega}$.
\end{itemize}
\end{prop}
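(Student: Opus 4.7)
The plan is to deduce Proposition \ref{prop:skorokhod-representation} by combining the tightness results of Lemmas \ref{lem:tightness} and \ref{lem:tightness-levy} with Jakubowski's generalisation of the Skorokhod representation theorem on non-metric spaces, and then to upgrade the mere $\bar{\mathbb{P}}$-a.s. identification of the limiting Poisson random measure to a pointwise identification by exploiting that $N_n$ does not depend on $n$.

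First, I would establish joint tightness on $\mathcal{X}_{T_2}$. By Lemma \ref{lem:tightness}, $\{\mathcal{L}(u_n)\}$ is tight on $(\mathcal{Z}_{T_2},\mathcal{T}_{T_2})$; by the discussion following Lemma \ref{lem:cpt} (invoking the compact embedding $L^2([0,T_2];W_0^{1,2}(D))\cap W^{\alpha,p}([0,T_2];W^{-1,2}(D))\hookrightarrow L^p([0,T_2];L^2(D))$ for $p\in(1,2)$ together with the estimates of Lemmas \ref{lem:est-galerkin} and \ref{lem:uniform-frac-galerkin}), $\{\mathcal{L}(u_n)\}$ is also tight on $L^p([0,T_2];L^2(D))$. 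Intersecting the corresponding compact sets yields tightness of $\{\mathcal{L}(u_n)\}$ on the first factor of $\mathcal{X}_{T_2}$. Combined with Lemma \ref{lem:tightness-levy}, the joint family $\{\mathcal{L}(u_n, N_n)\}$ is tight on $\mathcal{X}_{T_2}$.

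Second, I would verify that $\mathcal{X}_{T_2}$ satisfies Jakubowski's hypothesis, namely the existence of a countable family of $\mathcal{X}_{T_2}$-continuous real-valued functions that separates points. Each factor carries such a family: $L^p([0,T_2];L^2(D))$ is Polish; the space $\mathcal{Z}_{T_2}$ admits such a family by the construction described in \cite{Motyl-2013,Brzezniak-2013} (since each of $\mathbb{D}([0,T_2];W^{-1,2}(D))$, $\mathbb{D}([0,T_2];L^2_w(D))$, $L^2_w([0,T_2];W^{1,2}(D))$ and $L^2([0,T_2];L^2(D))$ does); and $M_{\bar{\mathbb{N}}}(\pmb{E}\times[0,T_2])$ is a separable metric space. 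Taking products yields a countable separating family on $\mathcal{X}_{T_2}$. Jakubowski's theorem then produces the probability space $(\bar\Omega,\bar{\mathcal{F}},\bar{\mathbb{P}})$, a subsequence, and random variables $(u_n^*,N_n^*)$, $(u_*,N_*)$ such that (a) and (b) hold.

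Finally, I would establish the pointwise identification (c). Here the crucial observation is that in our setting the $n$-th approximating measure was defined as $N_n\equiv N$, so all the laws $\mathcal{L}(N_n)$ coincide. Consequently the marginal laws $\mathcal{L}(N_n^*)$ are all equal and also equal to $\mathcal{L}(N_*)$; this allows one to invoke the strengthened Skorokhod construction recalled in \cite[Corollary~2]{Motyl-2013} and \cite[Theorem~D.1]{Erika-2009}, where for a component whose approximating law is constant in $n$ one may arrange on the Skorokhod space a single copy that serves simultaneously as $N_n^*$ for every $n$ and as $N_*$. I anticipate this last step, namely ensuring the \emph{everywhere} (rather than almost sure) equality $N_n^*(\bar\omega)=N_*(\bar\omega)$ for all $\bar\omega\in\bar\Omega$, to be the main technical point; it is needed because later the stochastic integrals driven by $N_n^*$ and by $N_*$ must be handled simultaneously within the same filtration, and pointwise equality avoids the measurability subtleties that would otherwise arise. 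Once (a)--(c) are in place, the remainder of the existence argument for a weak solution on $[0,T_2]$ proceeds by the standard martingale identification argument applied to $u_*$.
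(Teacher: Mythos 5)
Your proposal is correct and follows essentially the same route as the paper: joint tightness of $\{\mathcal{L}(u_n,N_n)\}$ on $\mathcal{X}_{T_2}$ from Lemmas~\ref{lem:tightness} and~\ref{lem:tightness-levy} together with the earlier $L^p$-tightness discussion, then Jakubowski's version of the Skorokhod theorem combined with \cite[Corollary~2]{Motyl-2013} and \cite[Theorem~D.1]{Erika-2009} to produce the new probability space and, since $N_n\equiv N$ is constant in $n$, the pointwise identification in (c). Your added discussion verifying the countable separating family for $\mathcal{X}_{T_2}$ and explaining why (c) follows from constancy of $N_n$ merely makes explicit what the paper leaves implicit.
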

Moreover, there exists a sequence of  perfect functions 
$\phi_{n}:\bar{\Omega}\to\Omega$ such that
\begin{align}
  {u}_{n}^*=u_n\circ\phi_{n}\,, \quad \mathbb{P}=\bar{\mathbb{P}}\circ \phi_{n}^{-1}\,; \label{eq:perfect-function}
 \end{align}
 see e.g., \cite[Theorem $1.10.4$ \&\ Addendum $1.10.5$]{Wellner}. Furthermore, by \cite[Section 9]{Erika-2009} we infer that $N_{n}^*$ and $N_*$ are time-homogeneous Poisson random measures on $\pmb{E}$ with the intensity measure $m({\rm d}z)$ over the stochastic basis
$(\bar{\Omega}, \bar{\mathcal{F}}, \bar{\mathbb{P}},\bar{\mathbb{F}})$, where  the  filtration
 $\bar{\mathbb{F}}:= \big( \bar{\mathcal{F}}_t\big)_{t\in [0,T_2]}$ is defined by
\begin{align*}
 \bar{\mathcal{F}}_t:= \sigma\big\{({u}_{n}^*(s), \,N_{n}^*(s),\, {u}_*(s), N_*(s)): 0\le s\le t\big\}, \quad t\in [0,T_2]. 
\end{align*}
With the help of \eqref{eq:perfect-function} and Proposition \ref{prop:skorokhod-representation}, we see that $u_n^*$ satisfies the following integral equation in $W^{-1,2}(D)$: for all $t\in [0,T_2]$,
\begin{align}
        u_n^*(t) & = P_n u_0 + \int_{0}^{t} \Delta u_n^*(s) \, {\rm d}s + \int_{0}^{t} P_n[u_n^*(s) \log |u_n^*(s)|]\, {\rm d}s + \int_{0}^{t}  \int_{\pmb{E}} P_n[\eta(u_n^*(s);z)] \,\widetilde{N}_*({\rm d}z,{\rm d}s)\,. \label{eq:galerkin-new-prob}
    \end{align}

  Following the same calculations as invoked in the proof of Lemma \ref{lem:est-galerkin}, we derive the following uniform bounds
  for $\{u_n^*\}$: for $r\in (1,2)$
 \begin{equation}\label{a-priori-3}
 \begin{aligned}
& \underset{n}\sup\, \bar{\mathbb{E}}\bigg[  \underset{t\in [0,T_2]}\sup\,\| {u}_{n}^*(t)\|_{L^2(D)}^2 + \int_{0}^{T_2} \| {u}_{n}^*(s)\|_{W_0^{1,2}(D)}^2 \,{\rm d}s  \bigg] \le C, \\
& \bar{\mathbb{E}}\bigg[ \int_0^{T_2} \| P_n[u_n^*(t)\log(|u_n^*(t)|)]\|_{W^{-1,2}(D)}^r\,{\rm d}t\bigg]\le C \Bigg( 1+  
\bar{\mathbb{E}}\bigg[ \underset{t\in [0,T_2]}\sup\,\| {u}_{n}^*(t)\|_{L^2(D)}^2\bigg]\Bigg)\,, \\
& \bar{\mathbb{E}}\bigg[ \int_0^{T_2}\| \Delta u_n^*(t)\|_{W^{-1,2}(D)}^2\,{\rm d}t\bigg] \le C \bar{\mathbb{E}}\bigg[ \int_0^{T_2}\| u_n^*(t)\|_{W^{1,2}_0(D)}^2\,{\rm d}t\bigg]\,.
\end{aligned}
\end{equation}
By employing the fact that $\|\cdot \|_{L^2(D)}$ and $\|\cdot \|_{W_{0}^{1,2}(D)}$ are lower semi-continuous on $W^{-1,2}(D)$, $P_n$ is a projection operator from $W^{-1,2}(D)$ to $L_n$, together with Fatou’s lemma, the uniform bound \eqref{a-priori-3},  and Proposition \ref{prop:skorokhod-representation}, we get
\begin{align} \label{eq:cgt-lim-est1}
    & \bar{\mathbb{E}}\bigg[ \underset{t\in [0,T_2]}\sup\,\| {u}_{*}(t)\|_{L^2(D)}^2 \bigg]  \le \bar{\mathbb{E}}\bigg[ \underset{t\in [0,T_2]}\sup\, \underset{m \rightarrow \infty}\liminf \, \| P_m{u}_*(t)\|_{L^2(D)}^2 \bigg]\notag \\
    & \le \underset{m \rightarrow \infty}\liminf \, \underset{n \rightarrow \infty}\liminf \, 
 \bar{\mathbb{E}}\bigg[\underset{t\in [0,T]}\sup\, \|P_m {u}_{n}^*(t)\|_{L^2(D)}^2 \bigg]  \le \underset{n}\sup\, \bar{\mathbb{E}}\Big[\underset{t\in [0,T_2]}\sup\,\| {u}_{n}^*(t)\|_{L^2(D)}^2 \Big]< + \infty\,.
\end{align}
Similarly, we have
\begin{align}\label{eq:cgt-lim-est2}
    \bar{\mathbb{E}}\bigg[ \int_{0}^{T_2} \| {u}_{*}(s)\|_{W_0^{1,2}(D)}^2 \,{\rm d}s  \bigg] < \infty\,.
\end{align}

In view of Proposition \ref{prop:skorokhod-representation} and the uniform moment estimate \eqref{a-priori-3}, we arrive at the following lemma.
\begin{lem}\label{lem:conv-martingale-2}
There exists a subsequence of $\{{u}_{n}^*\}$, still denoted by $\{{u}_{n}^* \}$ and $\{\bar{\mathcal{F}}_t\}$- adapted process 
${u}_{*} \in  \mathbb{D}([0,T_2]; W^{-1,2}(D))\cap \, L^{2}([0,T];W_{0}^{1,2}(D)) \, \cap \, L^{\infty}([0,T];L^{2}(D))$ such that
\begin{itemize}
\item[(a)] $u_n^*(\bar{\omega},t,x)\rightarrow u_* (\bar{\omega},t,x)~~\text{for $\bar{\mathbb{P}}$-a.s., and for a.e. $(t,x)\in [0,T_2]\times D$}$.
\item[(b)] ${u}_{n}^* \rightarrow {u}_{*}$ in $L^r\big(\bar{\Omega}; L^r([0,T_2];L^{2}(D))$,~~$r\in (1,2)$,
\item[(c)]  ${u}_{n}^* \rightharpoonup {u}_{*} $ in $L^2\big(\bar{\Omega}; L^2([0,T_2];W_0^{1,2}(D)),$
\item[(d)]  $\Delta{u}_{n}^* \rightharpoonup \Delta{u}_{*} $ in $ L^2\big(\bar{\Omega}; L^2([0,T_2];W^{-1,2}(D)),$
\item[(e)] $P_n\left[{u}_{n}^*\log \left|{u}_{n}^*\right|\right]\rightarrow {u}_{*} \log |{u}_{*}| $ in $L^r\big(\bar{\Omega}; L^r([0,T_2];W^{-1,2}(D))$. 
\end{itemize}
\end{lem}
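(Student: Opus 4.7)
\medskip

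\noindent\textbf{Proof proposal.} The starting point is Proposition \ref{prop:skorokhod-representation}, which gives $\bar{\mathbb{P}}$-a.s. convergence $u_n^*\to u_*$ in $L^p([0,T_2];L^2(D))$ for some $p\in(1,2)$, together with the $\bar{\mathbb{F}}$-adaptedness and the uniform bounds \eqref{a-priori-3}. My plan is to first establish the pointwise assertion (a) by a standard diagonal subsequence extraction: since $L^p([0,T_2];L^2(D))\cong L^p([0,T_2]\times D)$ (Fubini), the $\bar{\mathbb{P}}$-a.s.\ convergence in that norm allows me to extract, on an event of full measure, a subsequence (depending on $\bar\omega$ a priori, but I pass to a fixed subsequence via a standard measurability/diagonal argument) such that $u_n^*(\bar\omega,t,x)\to u_*(\bar\omega,t,x)$ for almost every $(t,x)$. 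From now on I work with this subsequence. The regularity $u_*\in L^\infty([0,T_2];L^2(D))\cap L^2([0,T_2];W_0^{1,2}(D))$ is already recorded in \eqref{eq:cgt-lim-est1}--\eqref{eq:cgt-lim-est2}, and membership in $\mathbb{D}([0,T_2];W^{-1,2}(D))$ follows because the Skorokhod representation places $u_*$ in $\mathcal{Z}_{T_2}$.

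For (b), I would apply Vitali's convergence theorem on the product space $\bar{\Omega}\times[0,T_2]$: the integrand $\|u_n^*(\bar\omega,t)-u_*(\bar\omega,t)\|_{L^2(D)}^{r}$ converges to $0$ in measure (from (a) and dominated convergence in $x$), and the uniform moment bound $\sup_n \bar{\mathbb{E}}[\sup_{t\le T_2}\|u_n^*(t)\|_{L^2(D)}^2]<\infty$ with $r<2$ furnishes uniform integrability. Assertions (c) and (d) are then immediate: the bounds in \eqref{a-priori-3} produce weakly convergent subsequences in the reflexive Hilbert spaces $L^2(\bar{\Omega};L^2([0,T_2];W_0^{1,2}(D)))$ and $L^2(\bar{\Omega};L^2([0,T_2];W^{-1,2}(D)))$ by Banach--Alaoglu, and the limits must be $u_*$ and $\Delta u_*$ respectively, because these weak limits agree distributionally with the strong $L^r$ limits obtained via (b) (weak convergence and strong convergence must share the same limit).

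The main technical work lies in (e). My plan has three ingredients. First, using (a) and continuity of the map $y\mapsto y\log|y|$ (with the convention $0\log 0=0$), I obtain $u_n^*\log|u_n^*|\to u_*\log|u_*|$ pointwise a.e.\ on $\bar{\Omega}\times[0,T_2]\times D$. Second, I establish uniform integrability of $\|u_n^*\log|u_n^*|\|_{W^{-1,2}(D)}^{r}$ over $\bar\Omega\times[0,T_2]$. For this I use the embedding $L^{q^*}(D)\hookrightarrow W^{-1,2}(D)$ from \eqref{embedding-1} together with the pointwise bound \eqref{inq:important-log}: choosing $\varepsilon>0$ so that $(1+\varepsilon)q^*\le 2$, I get
\begin{align*}
\|u_n^*(t)\log|u_n^*(t)|\|_{W^{-1,2}(D)} \le C\bigl(1+\|u_n^*(t)\|_{L^2(D)}^{1+\varepsilon}\bigr),
\end{align*}
which is bounded in $L^{r+\delta}(\bar\Omega\times[0,T_2])$ for some small $\delta>0$ by the uniform $L^2$ estimate, yielding the required uniform integrability (take e.g.\ $r+\delta$ close to $2/(1+\varepsilon)$). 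Vitali's theorem then gives strong convergence $u_n^*\log|u_n^*|\to u_*\log|u_*|$ in $L^r(\bar{\Omega};L^r([0,T_2];W^{-1,2}(D)))$. Third, I handle the projection by splitting
\begin{align*}
P_n[u_n^*\log|u_n^*|]-u_*\log|u_*|
= P_n\bigl(u_n^*\log|u_n^*|-u_*\log|u_*|\bigr) + (P_n-I)[u_*\log|u_*|].
\end{align*}
The operators $P_n$ are uniformly bounded on $W^{-1,2}(D)$ (as transposes of the orthogonal projections onto $L_n$ in $V$) and satisfy $P_n\xi\to\xi$ in $W^{-1,2}(D)$ for every fixed $\xi$; combined with dominated convergence in $\bar{\Omega}\times[0,T_2]$, both summands vanish in the $L^r$ norm, completing (e). The main obstacle I anticipate is verifying the uniform integrability in (e) with the correct exponent interplay between $r$, $\varepsilon$, and $q^*$; the rest is a careful but routine unpacking of Vitali's theorem and properties of the Galerkin projections.
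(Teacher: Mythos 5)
Your overall strategy---Vitali plus uniform integrability for the strong limits, Banach--Alaoglu plus uniqueness of limits for the weak ones, and a split into a uniformly bounded projected piece plus a $(P_n-I)$-remainder for part (e)---is the right one and matches what the paper intends; the exponent bookkeeping in (e) (the embedding $L^{q^*}(D)\hookrightarrow W^{-1,2}(D)$ from \eqref{embedding-1}, the bound \eqref{inq:important-log}, and the constraint $(1+\varepsilon)(r+\delta)\le 2$) is correct, and $P_n$ is indeed a contraction on $W^{-1,2}(D)$ by the eigenfunction expansion.

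There is, however, a genuine gap in how you argue (a). First, the claim $L^p([0,T_2];L^2(D))\cong L^p([0,T_2]\times D)$ is not an isometry for $p\ne 2$; what you actually have, since $D$ is bounded and $p<2$, is a continuous embedding $L^p([0,T_2];L^2(D))\hookrightarrow L^p([0,T_2]\times D)$ by H\"older, and that is all you need. Second, and more seriously, extracting an $\bar\omega$-independent subsequence from $\bar{\mathbb{P}}$-a.s. $L^p$-in-$(t,x)$ convergence via a ``measurability/diagonal argument'' does not work: for each $\bar\omega$ you can extract a pointwise-convergent subsequence, but these subsequences range over an uncountable family indexed by $\bar\Omega$, and diagonalization gives you nothing. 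The correct route is exactly the one your part (b) already supplies: first prove $u_n^*\to u_*$ in $L^r\big(\bar\Omega\times[0,T_2]\times D\big)$ (a.s. convergence of the scalar random variable $\|u_n^*-u_*\|_{L^r([0,T_2];L^2(D))}$ to zero, together with the uniform bound \eqref{a-priori-3} for uniform integrability, then Vitali and the embedding above), and only then extract one subsequence converging a.e.\ with respect to the product measure on $\bar\Omega\times[0,T_2]\times D$. In other words, the logical order should be (b) before (a); your write-up, which makes (b) depend on (a), is circular as stated. With that reordering---and noting (b) does not actually need (a) since Proposition \ref{prop:skorokhod-representation} already gives a.s.\ convergence in $L^2([0,T_2];L^2(D))$ through the $\mathcal{Z}_{T_2}$ component---the remainder of the proof goes through as you describe.
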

We now discuss about the convergence of the stochastic integral. Let $L^2((\Gamma,\mathcal{G}, \tau);\R)$ denote the square integrable predictable integrands for It\^{o}-L\'{e}vy integrals with respect to the compensated Poisson random measure $\widetilde{N}_*({\rm d}z,{\rm d}s)$, where 
$$ \Gamma=\bar{\Omega}\times [0,T_2]\times \pmb{E},~~~\mathcal{G}=\mathcal{P}_{T_2}\times \mathcal{B}(\pmb{E}),~~~\tau=\bar{\mathbb{P}}\otimes\lambda_t \otimes m({\rm d}z)$$ with $\mathcal{P}_{T_2}$ being the predictable $\sigma$-field on $\bar{\Omega}\times [0,T_2]$ with respect to 
$\{\bar{\mathcal{F}}_t\}$ and $\lambda_t$ the Lebesgue measure on $[0,T_2]$. Since the It\^{o}-L\'{e}vy integral from $L^2((\Gamma,\mathcal{G}, \tau);\R)$ to $L^2((\bar{\Omega}, \bar{\mathcal{F}}_{T_2});\R)$ is a linear isometry operator and any isometry between two Hilbert spaces preserves the weak convergence, one can easily see that any weakly converging sequence of integrands $ \{\chi_n(t,z)\} \subset L^2((\Gamma,\mathcal{G}, \tau);\R)$, the corresponding sequence of It\^{o}-L\'{e}vy integrals with respect to $\widetilde{N}_*({\rm d}z,{\rm d}s)$ also converge weakly in $L^2((\bar{\Omega}, \bar{\mathcal{F}}_{T_2});\R)$. 

Thanks to the sublinear growth of $\eta$, the uniform moment estimate \eqref{a-priori-3} and $a)$ of Lemma \ref{lem:conv-martingale-2}, we see that 
$\chi_n(t,x):= \langle P_n(\eta(u_n^*(t);z)), \phi \rangle$ for $\phi\in W_0^{1,2}(D)$ converges weakly to 
$\chi(t,z):=\langle \eta(u_*(t);z)), \phi \rangle$ in $L^2((\Gamma,\mathcal{G}, \tau);\R)$. As a consequence of the above discussion, the It\^{o}-L\'{e}vy integrals 
$$\int_{0}^{T_2}\int_{\pmb{E}} \chi_n(s,z)\widetilde{N}_*({\rm d}z,{\rm d}s) \rightharpoonup \int_{0}^{T_2}\int_{\pmb{E}} \chi(s,z)\widetilde{N}_*({\rm d}z,{\rm d}s)~~~\text{in}~~L^2((\bar{\Omega}, \bar{\mathcal{F}}_{T_2});\R).$$ In other words, we get
\begin{align}
 \int_{0}^{\cdot}\int_{\pmb{E}} P_n \eta({u}_{n}^*(s);z) \widetilde{N}_*({\rm d}z,{\rm d}s) \rightharpoonup \int_{0}^{\cdot}\int_{\pmb{E}} \eta({u}_{*}(s);z)\, \widetilde{N}_*({\rm d}z,{\rm d}s)~~\text{in}~\,L^{2}([0,T_2];L^{2}(\bar{\Omega},W^{-1,2}(D))\,. \label{conv:stoc-new-prob}
\end{align}

Thanks to Lemma \ref{lem:conv-martingale-2} and the convergence result in \eqref{conv:stoc-new-prob}, we pass to the limit 
in \eqref{eq:galerkin-new-prob} to conclude that $u_*$ is indeed a weak solution of \eqref{eq:log-nonlinear} on the interval $[0,T_2]$. Moreover, in view of  \eqref{eq:cgt-lim-est1} and \eqref{eq:cgt-lim-est2} and 
by \cite[Theorem 2]{Krylov-1982}, $u_*\in \mathbb{D}([0,T_2];L^2(D))$. 

\subsection{Uniqueness of solution of \eqref{eq:log-nonlinear}} \label{subsec:uniqueness}

In this subsection, we wish to show the path-wise uniqueness of solutions for \eqref{eq:log-nonlinear} via the standard $L^2$-contraction method. Let $u_1, u_2$ be two solutions of equation \eqref{eq:log-nonlinear} defined on the same stochastic basis 
$(\Omega, \mathcal{F}, \mathbb{P},\{\mathcal{F}_t\}, N)$. For any $R>0$ and $\delta \in (0,1]$, we define
\begin{align*}
\tau_R & :=\inf \left\{t>0:\|u_1(t)\|_{L^2(D)}^2 \vee \|u_2(t)\|_{L^2(D)}^2>R\right\}, \\
\tau_R^{\prime} & :=\inf \left\{t>0: \int_0^t\|u_1(s)\|_{W_0^{1,2}(D)}^2 \,{\rm d}s>R\right\} \wedge \inf \left\{t>0: \int_0^t\|u_2(s)\|_{W_0^{1,2}(D)}^2 \,{\rm d}s>R\right\}, \\
\tau^\delta & :=\inf \{t>0:\|u_1(t)-u_2(t)\|_{L^2(D)}>\delta\}, \\
\tau_R^\delta & :=\tau_R \wedge \tau_R^{\prime} \wedge \tau^\delta .
\end{align*}
We apply It\^{o}-L\'{e}vy formula to the function $x\mapsto \|x\|_{L^2(D)}^2$ on $u(t):=u_1(t)-u_2(t)$, and use the integration by parts formula to have
\begin{align}
 & \| u(t \wedge \tau_R^\delta)\|_{L^2(D)}^2 + 2 \int_{0}^{t \wedge \tau_R^\delta}  \| u(s)\|_{W_0^{1,2}(D)}^2 \,{\rm d}s \notag \\
 & = 2 \int_{0}^{t \wedge \tau_R^\delta} \big( u_1(s) \log |u_1(s)|- u_2(s)\log |u_2(s)|, u_1(s)-u_2(s) \big) \,{\rm d}s
 \notag \\
 & \quad + 2 \int_{0}^{t \wedge \tau_R^\delta} \int_{\pmb{E}} \Big ( \|  u(s) + \eta(u_1(s);z) - \eta(u_2(s);z) \|_{L^2(D)}^2 -  \|  u(s)\|_{L^2(D)}^2 \Big) \, \widetilde{N}({\rm d}z,{\rm d}s) \notag \\
 & \quad + \int_{0}^{t \wedge \tau_R^\delta} \int_{\pmb{E}} \| \eta(u_1(s);z) - \eta(u_2(s);z) \|_{L^2(D)}^2\, m({\rm d}z)\,{\rm d}s  \equiv \sum_{i=1}^3\mathcal{B}_i \,.\label{eq:uniqueness-1}
\end{align}

In view of the assumption \ref{A2}, one has
\begin{align}
\mathcal{B}_3 \le & 2K_{1}^{2} \int_{0}^{t \wedge \tau_R^\delta} \|u(s)\|_{L^2(D)}^2 \,{\rm d}s + \mathcal{B}_{3,1}\,, \notag
\end{align}
where $\mathcal{B}_{3,1}$ is given by 
\begin{align*}
\mathcal{B}_{3,1}:= 2 K_2^2 \int_0^{t \wedge \tau_R^\delta} \int_D|u_1(s, x)-u_2(s, x)|^2 \log _{+}(|u_1(s, x)| \vee|u_2(s, x)|) \,{\rm d}x \,{\rm d}s\,.
\end{align*}
By using Lemma \ref{lem:result-1} with $\epsilon=\frac{1}{4}$, we estimate $\mathcal{B}_1$ as follows.
\begin{align*}
\mathcal{B}_1 \le & \frac{1}{2} \int_0^{t \wedge \tau_R^\delta}\|u(s)\|_{W_0^{1,2}(D)}^2 \,{\rm d}s + 2(1+ \frac{d}{4} \log 4)  \int_0^{t \wedge \tau_R^\delta} \|u(s)\|_{L^2(D)}^2 \,{\rm d}s \notag \\
& + 2  \int_0^{t \wedge \tau_R^\delta} \|u(s)\|_{L^2(D)}^2 \log(\|u(s)\|_{L^2(D)})\,{\rm d}s \notag \\
& \qquad  + \frac{1}{(1-\alpha)e} \int_0^{t \wedge \tau_R^\delta} \big( 
\|u_1(s)\|_{L^2(D)}^{2(1-\alpha)} + \|u_2(s)\|_{L^2(D)}^{2(1-\alpha)}\big) \|u(s)\|_{L^2(D)}^{2\alpha} \,{\rm d}s\,.
\end{align*}
Again we use  Lemma \ref{lem:result-2} with $\epsilon=\frac{1}{4 K_2^2}$ to estimate $\mathcal{B}_{3,1}$:
\begin{align*}
\mathcal{B}_{3,1} \le & \frac{1}{2} \int_0^{t \wedge \tau_R^\delta}\|u(s)\|_{W_0^{1,2}(D)}^2 \,{\rm d}s + 2K_2^2 \frac{d}{4} \log (4K_2^2)  \int_0^{t \wedge \tau_R^\delta} \|u(s)\|_{L^2(D)}^2 \,{\rm d}s \notag \\
& + 2K_2^2  \int_0^{t \wedge \tau_R^\delta} \|u(s)\|_{L^2(D)}^2 \log(\|u(s)\|_{L^2(D)})\,{\rm d}s \notag \\
& \quad  + \frac{K_2^2}{(1-\alpha)e} \int_0^{t \wedge \tau_R^\delta} \big( 
\|u_1(s)\|_{L^2(D)}^{2(1-\alpha)} + \|u_2(s)\|_{L^2(D)}^{2(1-\alpha)}\big) \|u(s)\|_{L^2(D)}^{2\alpha} \,{\rm d}s \notag \\
& \qquad + \frac{K_2^2}{(1-\alpha)e} \int_0^{t \wedge \tau_R^\delta} \big( 4\lambda(D)\big)^{1-\alpha}\|u(s)\|_{L^2(D)}^{2\alpha}\,.
\end{align*}
Hence, from \eqref{eq:uniqueness-1}, we have 
\begin{align}
 & \| u(t \wedge \tau_R^\delta)\|_{L^2(D)}^2 +  \int_{0}^{t \wedge \tau_R^\delta}  \| u(s)\|_{W_0^{1,2}(D)}^2 \,{\rm d}s \notag \\
& \leq C \int_0^{t \wedge \tau_R^\delta}  \| u(s)\|_{L^2(D)}^2 \,{\rm d}s +C \int_0^{t \wedge \tau_R^\delta}\| u(s)\|_{L^2(D)}^2 \log \| u(s)\|_{L^2(D)} \,{\rm d}s \notag \\
& +\frac{1+K_2^2}{(1-\alpha) \mathrm{e}} \int_0^{t \wedge \tau_R^\delta}\left(\|u_1(s)\|_{L^2(D)}^{2(1-\alpha)}+\|u_2(s)\|_{L^2(D)}^{2(1-\alpha)}\right)\| u(s)\|_{L^2(D)}^{2 \alpha} \,{\rm d}s \notag \\
& +\frac{K_2^2}{(1-\alpha) \mathrm{e}} \int_0^{t \wedge \tau_R^\delta}(4 \lambda(D))^{1-\alpha}\| u(s)\|_{L^2(D)}^{2 \alpha}\,{\rm d}s  \notag\\
& + 2 \int_{0}^{t \wedge \tau_R^\delta} \int_{\pmb{E}} \Big ( \|  u(s) + \eta(u_1(s);z) - \eta(u_2(s);z) \|_{L^2(D)}^2 -  \|  u(s)\|_{L^2(D)}^2 \Big) \, \widetilde{N}({\rm d}z,{\rm d}s)\,. \label{inq:uni-2}
\end{align}
Recalling the definition of $\tau_R^\delta$, we have from  \eqref{inq:uni-2}, after taking expectation
\begin{align}
    \mathbb{E} \big[\| u(t \wedge \tau_R^\delta)\|_{L^2(D)}^2\big] & \le  C  \int_0^{t \wedge \tau_R^\delta}  \mathbb{E}\big[\| u(s)\|_{L^2(D)}^2\big] \,{\rm d}s \notag \\
    & + \frac{2\left(1+K_2^2\right) R^{1-\alpha}+K_2^2(4 \lambda(D))^{1-\alpha}}{(1-\alpha) \mathrm{e}} \int_0^{t \wedge \tau_R^\delta} \mathbb{E}\big[ \| u(s)\|_{L^2(D)}^{2\alpha}\big] \,{\rm d}s \notag
\end{align}
Setting ${\tt u}(t):=\mathbb{E} \big[\| u(t \wedge \tau_R^\delta)\|_{L^2(D)}^2\big]$, and applying Lemma \ref{lem:nonlinear-gronwall}, we get
\begin{align*}
      {\tt u}(t) & \le \left\{\frac{2\left(1+K_2^2\right) R^{1-\alpha}+K_2^2(4 \lambda(D))^{1-\alpha}}{\mathrm{e}} \int_0^{t}  e^{C(1-\alpha)(t-s)} \,{\rm d}s \right\}^{\frac{1}{1-\alpha}} \notag \\
     & = \left[\frac{2\left(1+K_2^2\right) R^{1-\alpha}+K_2^2(4 \lambda(D))^{1-\alpha}}{\mathrm{e}}\right]^{\frac{1}{1-\alpha}} \times\left(\int_0^{t} e^{C(1-\alpha) s} \,{\rm d}s \right)^{\frac{1}{1-\alpha}} \notag \\
     & \leq \frac{1}{2}\left\{\left[\frac{4\left(1+K_2^2\right) t^\alpha}{\mathrm{e}}\right]^{\frac{1}{1-\alpha}}R +\left[\frac{2 K_2^2 t^\alpha}{\mathrm{e}}\right]^{\frac{1}{1-\alpha}} \times 4 \lambda(D)\right\} \times\left(\int_0^{t} e^{C s} \,{\rm d}s \right).   
\end{align*}
Setting $T^*:=\left(\frac{\mathrm{e}}{4\left(1+K_2^2\right)}\right)^2$, and then letting $\alpha \rightarrow 1$, we obtain
\begin{align*}
    {\tt u}(t) =0, \quad \forall \, 0 \leq t \leq T^*\,.
\end{align*}
Note that $T^*$ is independent of the initial value. Hence, one can use the same argument repeatedly on $[T^*, 2T^*]$, $[2T^*, 3T^*]$ and so on to deduce that  ${\tt u}(t)=0$ for any $t \geq 0$. This means
\begin{align*}
    \mathbb{E}\Big[\left\| (u_1 - u_2) \left(t \wedge \tau_R \wedge \tau_R^{\prime} \wedge \tau^\delta\right)\right\|_{L^2(D)}^2\Big]=0 \quad \forall t \geq 0.
\end{align*}
Observe that $\mathbb{P}$-a.s.,  $\tau_R,~\tau_R^{\prime} \rightarrow \infty$ as $R \rightarrow \infty$. Hence, we have
\begin{align*}
    \mathbb{E}\Big[\left\| (u_1 - u_2) \left(t \wedge \tau^\delta\right)\right\|_{L^2(D)}^2\Big]=0, \quad \forall~ t \geq 0\,.
\end{align*}
In view of the definition of $\tau^{\delta}$ and the above equality, one can easily conclude that 
\begin{align*}
   u_1(t)=u_2(t), \quad \mathbb{P} \text {-a.s., } \quad \forall t \geq 0 . 
\end{align*}
This completes the proof.
\subsection{Proof of Theorem \ref{thm:existence-strong}}
In view of Subsections \ref{subsec:existence-weak-local} and \ref{subsec:uniqueness}, equation \eqref{eq:log-nonlinear} has a unique weak solution $u$ 
 on $[0,T_2]$. Hence, the existence of a unique strong solution of \eqref{eq:log-nonlinear} follows from \cite[Theorem 2]{Ondrejat-2004}. Moreover, 
 $u$ satisfies the {\it a-priori}  estimate as in Lemma \ref{lem:est-galerkin}. To prove the existence of a global strong solution, we closely follow the argument of \cite{Shang-2022}. For any $t\ge 0$ and $A \in \mathcal{B}(\pmb{E})$, define 
 $$ N_t(A)=N((0,t]\times A).$$
 One can easily check that,  for any ${\tt s}\ge 0$, $N_t^{\tt s}(\cdot):= N_{t+{\tt s}}(\cdot)-N_{{\tt s}}(\cdot)$ is a time-homogeneous Poisson random measure with respect to the filtration $\{\mathcal{F}_t^{{\tt s}}:=\mathcal{F}_{t+{\tt s}}\}_{t\ge 0}$. Moreover, $N_t^{\tt s}(\cdot)$ and 
 $N_t(\cdot)$ have the same distribution. Keeping these in mind, one can follow the proof of \cite[Step $2$, Theorem $5.4$]{Shang-2022} to conclude the existence of a probabilistic strong solution $u$ to the equation \eqref{eq:log-nonlinear} on the interval $[0,T]$ for any given $T>0$.
 Moreover, $u$ satisfies the moment estimate \eqref{esti:bound-weak-solun}. Furthermore, since $T>0$ is arbitrary and pathwise uniqueness holds, the solution $u$ is global. This finishes the proof of Theorem \ref{thm:existence-strong}.

\section{Skeleton equation: wellposedness} \label{sec:log-skeleton}
In this section, we prove the wellposedness theory for the skeleton equation \eqref{eq:log-skeleton} under the assumptions \ref{A1} and \ref{B1}.
We start with the following lemmas whose proof can be found in \cite{Budhiraja-2013} and \cite{Yang-2015}. 

\begin{lem} \label{lem:identity-eta}
Let the assumption \ref{B1} hold. Then the following are true. 
    \begin{itemize}
        \item [(a)] For $i=1,2$ and every $N \in \mathbb{N}$,
\begin{align*}
& C_{i, 2}^{N}:=\underset{g \in S_N} \sup \int_0^T \int_{\pmb{E}} {\tt h}_i^2(z)(g(s, z)+1) \,m({\rm d}z) {\rm d}s< +\infty, \\
& C_{i, 1}^{N}:= \underset{g \in S_N} \sup \int_0^T \int_{\pmb{E}} {\tt h}_i(z) |g(s, z)-1| \,m({\rm d}z) {\rm d} s< +\infty .
\end{align*}
\item[(b)] For every $\eta>0$, there exists $\tilde{\delta}>0$ such that for any $A \subset[0, T]$ satisfying $\lambda_T(A)<\tilde{\delta}$,
\begin{align*}
    \underset{g \in S_N}\sup \int_{A} \int_{\pmb{E}} {\tt h}_i(z)|g(s, z)-1| \, m({\rm d}z) {\rm d}s \leq \eta .
\end{align*}
\end{itemize}
\end{lem}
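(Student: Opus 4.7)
The core tool is the Fenchel--Young inequality associated with the convex entropy $\Phi(y):=y\log y-y+1$, whose Legendre conjugate is $\Phi^{*}(x)=e^{x}-1$; concretely, $xy\le \Phi(y)+e^{x}-1$ for every $x\in\R$ and $y\ge 0$. Taking $x=\sigma {\tt h}_i(z)$, $y=g(s,z)$ and rescaling yields ${\tt h}_i\, g\le \sigma^{-1}\Phi(g)+\sigma^{-1}(e^{\sigma {\tt h}_i}-1)$, with the analogous bound when ${\tt h}_i$ is replaced by ${\tt h}_i^{2}$. Before proceeding I verify that $I_1(\sigma):=\int_{\pmb{E}}(e^{\sigma {\tt h}_i}-1)\,m({\rm d}z)<\infty$ for every $\sigma>0$ and that $I_2(\sigma):=\int_{\pmb{E}}(e^{\sigma {\tt h}_i^{2}}-1)\,m({\rm d}z)<\infty$ for $\sigma\in(0,\delta]$ (with $\delta$ the constant from the definition of $\mathcal{H}_2$). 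The verification splits $\pmb{E}$ at $\{{\tt h}_i>1\}$: this set has finite $m$-measure by Markov and ${\tt h}_i\in L^{1}(\pmb{E},m)$, so Remark \ref{rem:regarding-condition} (respectively the $\mathcal{H}_2$-hypothesis) controls the tail there, while on $\{{\tt h}_i\le 1\}$ the elementary bound $e^{\sigma {\tt h}_i}-1\le \sigma e^{\sigma}{\tt h}_i$ together with ${\tt h}_i\in L^{1}$ closes the estimate.

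For (a), using $|g-1|\le g+1$ I have $\int_{0}^{T}\!\!\int_{\pmb{E}}{\tt h}_i|g-1|\,m({\rm d}z)\,{\rm d}s\le \int_{0}^{T}\!\!\int_{\pmb{E}}{\tt h}_i\, g\,m({\rm d}z)\,{\rm d}s+T\|{\tt h}_i\|_{L^{1}(\pmb{E},m)}$. Integrating the Young bound over $[0,T]\times \pmb{E}$ majorises the first term by $\sigma^{-1}(L_T(g)+T\,I_1(\sigma))\le \sigma^{-1}(N+T\,I_1(\sigma))$, so $C_{i,1}^{N}<\infty$. Writing ${\tt h}_i^{2}(g+1)={\tt h}_i^{2}+{\tt h}_i^{2} g$ and repeating the argument with the quadratic Young inequality (now necessarily with some $\sigma\in(0,\delta]$ so that $I_2(\sigma)<\infty$) yields $C_{i,2}^{N}\le \sigma^{-1}(N+T\,I_2(\sigma))+T\|{\tt h}_i\|_{L^{2}(\pmb{E},m)}^{2}<\infty$. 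Crucially, both constants depend on $g$ only through the bound $L_T(g)\le N$, so the suprema over $S_N$ are indeed finite.

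For (b), performing the same computation on $A\times \pmb{E}$ instead of $[0,T]\times \pmb{E}$ produces the key estimate
\[
\sup_{g\in S_N}\int_{A}\!\!\int_{\pmb{E}}{\tt h}_i|g-1|\,m({\rm d}z)\,{\rm d}s\ \le\ \frac{N}{\sigma}+\frac{\lambda_T(A)\,I_1(\sigma)}{\sigma}+\lambda_T(A)\|{\tt h}_i\|_{L^{1}(\pmb{E},m)}.
\]
Given $\eta>0$ I first pick $\sigma>3N/\eta$ so that the first term is $<\eta/3$; with $\sigma$ fixed, I then take $\tilde\delta>0$ small enough that the remaining two terms each fall below $\eta/3$ whenever $\lambda_T(A)<\tilde\delta$. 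The main conceptual obstacle --- and the one point where Remark \ref{rem:regarding-condition} is indispensable --- is that $\sigma$ must be sent to infinity to cancel the $N/\sigma$ term; this requires the linear exponential integrability $I_1(\sigma)<\infty$ for arbitrarily large $\sigma$, whereas the quadratic analogue $I_2(\sigma)$ is restricted to $\sigma\le\delta$ and would be insufficient. In other words, the whole argument for (b) hinges on applying Young's inequality with the linear weight ${\tt h}_i$ rather than ${\tt h}_i^{2}$.
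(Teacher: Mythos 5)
Your proof is correct, and it reconstructs the argument in essentially the standard form. The paper itself does not prove this lemma---it simply defers to \cite{Budhiraja-2013} and \cite{Yang-2015}---and the mechanism in those references is precisely the Fenchel--Young duality for the entropy $\Phi(y)=y\log y-y+1$ with conjugate $\Phi^{*}(x)=e^{x}-1$ (in the equivalent form $ab\le e^{a}+b\log b-b$), applied to the weight $\sigma{\tt h}_i$ (respectively $\sigma{\tt h}_i^2$) and the control $g$. Your split of $\pmb{E}$ at $\{{\tt h}_i>1\}$ (finite measure by Markov, tail handled by $\mathcal{H}_2$ together with Remark \ref{rem:regarding-condition}), and the observation for part (b) that $\sigma$ must be allowed to grow so that $N/\sigma$ is absorbed---which forces the linear rather than quadratic exponential integrability---are exactly the ingredients those references rely on, so there is no substantive divergence to report.
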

\begin{lem} \label{lem:technical-control}
Under the assumption \ref{B1}, the following holds:
    \begin{itemize}
        \item[(i)] If $\underset{t \in [0,T]}\sup \|y(t)\|_{L^2(D)}<\infty$, then for any $g \in \mathbb{S}$
\begin{align*}
\int_{\pmb{E}} \eta( y(\cdot); z)\,(g(\cdot, z)-1) \, m({\rm d}z) \in L^{1}([0, T]; L^2(D))\,.
\end{align*}
\item[(ii)] Let $\{y_n\}_{n\in \mathbb{N}}$ be a family of mapping from $[0,T]$ to $L^2(D)$ such that  $$\underset{n\in \mathbb{N}}\sup\, \underset{s \in[0, T]}\sup\, \left\|y_n(s)\right\|_{L^2(D)} < \infty.$$
Then, for any $N\in \mathbb{N}$, there exists a finite constant $ \tilde{C}_{N}>0$ such that
\begin{align*}
\tilde{C}_{N}:= \sup_{g\in S_N}\sup_{n\in \mathbb{N}} \int_0^T \Big\|\int_{\pmb{E}} \eta(y_n(s); z)(g(s, z)-1) \, m({\rm d}z) \Big\|_{L^2(D)} {\rm d}s  <\infty \,.
\end{align*}
\end{itemize}
\end{lem}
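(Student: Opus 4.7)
\medskip

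\noindent\textbf{Proof proposal for Lemma \ref{lem:technical-control}.}
Both assertions are essentially consequences of the sub-linear growth estimate \eqref{cond:linear-growth-ldp} combined with the integrability information for the controls furnished by Lemma \ref{lem:identity-eta}. The plan is to bound the $L^2(D)$-norm of the integrand pointwise in $s$ by $h_2(z)(1+\|y(s)\|_{L^2(D)}^\theta)|g(s,z)-1|$, then use the uniform $L^2(D)$-bound on $y$ (respectively $\{y_n\}$) to pull out a constant, and finally invoke Lemma \ref{lem:identity-eta}(a) to control $\int_0^T\!\int_{\pmb E}h_2(z)|g(s,z)-1|\,m({\rm d}z)\,{\rm d}s$.

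For part (i), I first note that any $g\in\mathbb{S}$ lies in some $S_N$. Applying the Minkowski/triangle inequality for Bochner integrals in $L^2(D)$, followed by \eqref{cond:linear-growth-ldp}, yields
\begin{align*}
\int_0^T\!\Big\|\!\int_{\pmb E}\!\eta(y(s);z)(g(s,z)-1)\,m({\rm d}z)\Big\|_{L^2(D)}\!{\rm d}s
\le \bigl(1+M^\theta\bigr)\int_0^T\!\int_{\pmb E}h_2(z)|g(s,z)-1|\,m({\rm d}z)\,{\rm d}s,
\end{align*}
where $M:=\sup_{t\in[0,T]}\|y(t)\|_{L^2(D)}<\infty$. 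Since $h_2\le 1$, the bound $C_{2,1}^N$ from Lemma \ref{lem:identity-eta}(a) with $i=2$ is finite, and the claim follows.

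For part (ii), setting $\tilde M:=\sup_{n}\sup_{s\in[0,T]}\|y_n(s)\|_{L^2(D)}<\infty$, the same chain of estimates applied uniformly in $n$ and $g\in S_N$ gives
\begin{align*}
\sup_{g\in S_N}\sup_{n\in\mathbb N}\int_0^T\!\Big\|\!\int_{\pmb E}\!\eta(y_n(s);z)(g(s,z)-1)\,m({\rm d}z)\Big\|_{L^2(D)}\!{\rm d}s
\le \bigl(1+\tilde M^{\theta}\bigr)\,C_{2,1}^{N},
\end{align*}
so one may take $\tilde C_N:=(1+\tilde M^\theta)C_{2,1}^N$.

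I do not anticipate any real obstacle here: the argument is routine once the sub-linear growth of $\eta$ and the control bound from Lemma \ref{lem:identity-eta} are in place. The only point to be careful about is the application of the Bochner--Minkowski inequality, which is valid because $s\mapsto\eta(y(s);\cdot)(g(s,\cdot)-1)$ is, by \eqref{cond:linear-growth-ldp} and the finiteness of $C_{2,1}^N$, integrable as an $L^2(D)$-valued function on $[0,T]\times\pmb E$ with respect to ${\rm d}s\otimes m({\rm d}z)$; this is precisely what guarantees the inner integral belongs to $L^2(D)$ for a.e.\ $s$.
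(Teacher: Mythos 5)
Your argument is correct. The paper does not present its own proof of this lemma---it simply defers to \cite{Budhiraja-2013} and \cite{Yang-2015}---but the route you take is the natural one: the Bochner--Minkowski inequality, the sub-linear growth bound \eqref{cond:linear-growth-ldp}, the uniform $L^2(D)$-bound on $y$ (resp.\ $\{y_n\}$), and finally the finiteness of $C_{2,1}^N$ from Lemma \ref{lem:identity-eta}(a) with $i=2$. Your closing remark is the right justification for the Bochner--Minkowski step: since the iterated integral $\int_0^T\int_{\pmb E}{\tt h}_2(z)|g(s,z)-1|\,m({\rm d}z)\,{\rm d}s$ is finite, Tonelli gives that for a.e.\ $s$ the inner integral is finite, hence $z\mapsto\eta(y(s);z)(g(s,z)-1)$ is Bochner integrable. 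One small slip in wording: the sentence ``Since $h_2\le 1$, the bound $C_{2,1}^N$ from Lemma \ref{lem:identity-eta}(a) with $i=2$ is finite'' is a non-sequitur---the finiteness of $C_{2,1}^N$ is precisely what Lemma \ref{lem:identity-eta}(a) asserts (and it rests on ${\tt h}_2\in\mathcal{H}_2\cap L^1(\pmb E,m)\cap L^2(\pmb E,m)$ from \ref{B1}); the extra pointwise bound ${\tt h}_2\le 1$ plays no role at this point. That aside, the proof is complete and the constant $\tilde C_N=(1+\tilde M^\theta)\,C_{2,1}^N$ is the right one.
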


\subsection{Galerkin approximations of \eqref{eq:log-skeleton} and {\it a-priori}  estimate:}
To show the existence of a solution for \eqref{eq:log-skeleton}, as a first step, we demonstrate the existence of an approximate solutions
(via Galerkin method), and then derive necessary {\it a-priori} estimates. Using the {\it a-priori}  estimates, we prove certain strong convergence result of approximate solutions, which then leads to the existence of a solution of the control equation \eqref{eq:log-skeleton}.
\vspace{0.2cm}

Let $g\in \mathbb{S}$ be fixed. Let $P_n$ be the projection operator given in \eqref{def-projection} in Section \ref{sec:existence-weak-solu}. For each fixed $n\in \mathbb{N}$, consider the following ODE in the finite-dimensional space $L_n$:
\begin{align} 
    \begin{cases}
   \displaystyle {\rm d}{\tt u}_n(t) - \Delta {\tt u}_n(t) {\rm d}t = P_n\left[{\tt u}_n(t) \log \left|{\tt u}_n(t)\right|\right] {\rm d}t+ P_n \Big( \int_{\pmb{E}} \eta({\tt u}_n;z)\, (g(t,z)-1) \, m({\rm d}z)\Big)\,{\rm d}t,~~t>0, \\
{\tt u}_n(0)=P_n u_0\,.
\end{cases} \label{eq:log-skeleton-finite-dim}
\end{align}

One can follow a similar line of argument~(under cosmetic change) as in Section \ref{sec:existence-weak-solu} and make a minor adjustment as in the proof of \cite[Theorem $2.1$]{Shizan-2005} to arrive at the following theorem.
\begin{thm}
    Let the assumptions \ref{A1} and \ref{B1} hold true. For any $n \in \mathbb{N}$, there exists a unique global solution ${\tt u}_n$ to equation \eqref{eq:log-skeleton-finite-dim}.
\end{thm}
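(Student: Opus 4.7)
The plan is to project onto the finite-dimensional basis and then prove existence and uniqueness in two stages: first a local solution via an Osgood-type argument, then global extension via an \emph{a priori} estimate based on the logarithmic Sobolev inequality and Lemma~\ref{lem:log-Gronwall}. Writing ${\tt u}_n(t)=\sum_{j=1}^{n}h_{jn}(t)e_j$ as in \eqref{eq:finite-dim-def}, equation \eqref{eq:log-skeleton-finite-dim} is equivalent to the ODE system
\begin{align*}
\dot h_{in}(t)=-\lambda_i h_{in}(t)+F_i\bigl(h_{1n}(t),\dots,h_{nn}(t)\bigr)+\int_{\pmb{E}}H_i\bigl(h_{1n}(t),\dots,h_{nn}(t);z\bigr)\bigl(g(t,z)-1\bigr)m({\rm d}z),
\end{align*}
with initial data $h_{in}(0)=(u_0,e_i)$, where $F_i$ and $H_i$ are the functions introduced before Lemma~\ref{est:finite-log}.

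First, I would establish local existence and uniqueness. The map $v\mapsto F_i(v)$ is continuous on $\mathbb{R}^n$ by Lemma~\ref{est:finite-log}(i), and in fact satisfies an Osgood-type modulus of continuity locally; the control drift is Lipschitz in ${\tt u}_n$ thanks to \eqref{cond:lipschitz-ldp} in \ref{B1}, since for any $v,w\in\mathbb{R}^n$ with $y_1=\sum v_je_j,\ y_2=\sum w_je_j$,
\begin{align*}
\Bigl\|\int_{\pmb{E}}\bigl(\eta(y_1;z)-\eta(y_2;z)\bigr)(g(t,z)-1)m({\rm d}z)\Bigr\|_{L^2(D)}
\le \|y_1-y_2\|_{L^2(D)}\int_{\pmb{E}}{\tt h}_1(z)|g(t,z)-1|m({\rm d}z),
\end{align*}
and Lemma~\ref{lem:identity-eta}(a) guarantees that the integrand on the right is an $L^1([0,T])$ function of $t$ uniformly over $g\in S_N$. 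Together with the sub-linear bound from \eqref{cond:linear-growth-ldp} and Lemma~\ref{lem:technical-control}(i), Carathéodory's existence theorem produces a local solution, while Osgood's uniqueness criterion (applied to the Osgood modulus from Lemma~\ref{est:finite-log}(i) for the log-drift and the time-$L^1$ Lipschitz modulus for the control-drift) yields local uniqueness.

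Second, to exclude blow-up I would derive an \emph{a priori} bound on $[0,T]$. Testing against ${\tt u}_n(t)$, integrating by parts, and using $\langle P_n[{\tt u}_n\log|{\tt u}_n|],{\tt u}_n\rangle=({\tt u}_n\log|{\tt u}_n|,{\tt u}_n)$ together with the logarithmic Sobolev inequality \eqref{inq:log-sov-1} with $\varepsilon=\tfrac12$, followed by Cauchy--Schwarz and \eqref{cond:linear-growth-ldp} to estimate the control term, yields
\begin{align*}
\|{\tt u}_n(t)\|_{L^2(D)}^2+\int_0^t\|{\tt u}_n(s)\|_{W_0^{1,2}(D)}^2{\rm d}s
\le C+\int_0^t f(s)\|{\tt u}_n(s)\|_{L^2(D)}^2\,{\rm d}s+\int_0^t\|{\tt u}_n(s)\|_{L^2(D)}^2\log\|{\tt u}_n(s)\|_{L^2(D)}^2\,{\rm d}s,
\end{align*}
where $f\in L^1([0,T])$ thanks to Lemma~\ref{lem:identity-eta}(a). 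Applying the log-Gronwall lemma (Lemma~\ref{lem:log-Gronwall}) gives a uniform bound on $[0,T]$, which rules out finite-time blow-up and allows the local solution to be continued up to time $T$; since $T$ is arbitrary this yields a global solution. Global uniqueness follows from this same argument applied to the difference of two solutions, reproducing in a simpler deterministic form the $L^2$-contraction computation carried out in Subsection~\ref{subsec:uniqueness} (replacing Lemma~\ref{lem:result-1} for the log-difference and using the Lipschitz bound \eqref{cond:lipschitz-ldp} instead of \ref{A2}).

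The main obstacle is the lack of Lipschitz continuity of $v\mapsto v\log|v|$, so neither the classical Picard theorem nor a direct Gronwall bound applies. This is precisely overcome by Lemma~\ref{est:finite-log}(i), which supplies an Osgood modulus sufficient for uniqueness, and by Lemma~\ref{lem:log-Gronwall}, which handles the $y\log y$ feedback in the energy inequality and thus prevents the log-type drift from driving a finite-time blow-up.
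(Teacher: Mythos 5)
Your proposal is correct and supplies the concrete Carath\'eodory/Osgood plus log-Gronwall argument that the paper itself leaves implicit by pointing to the Galerkin analysis of Section~\ref{sec:existence-weak-solu} (via \cite{Fubao-2019}) and to \cite[Theorem~2.1]{Shizan-2005}: you use the same ingredients the paper relies on there, namely the quasi-Osgood modulus for $F_i$ from Lemma~\ref{est:finite-log}(i), the Lipschitz/sub-linear structure of the controlled drift from \ref{B1} together with Lemma~\ref{lem:identity-eta}(a), and the log-Sobolev inequality with Lemma~\ref{lem:log-Gronwall} for the global a-priori bound. The one remark worth making is that once you have the a-priori bound and Osgood-type local uniqueness on the maximal interval, global uniqueness already follows by continuation, so the final appeal to the $L^2$-contraction computation of Subsection~\ref{subsec:uniqueness} is an alternative route rather than an additional necessary step.
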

We wish to analyze the strong convergence of the family $\{{\tt u}_n \}$ on some appropriate space. To do so, we need to derive some essential {\it a-priori} estimates. 

\begin{lem}\label{lem:apriori-skeleton-galerkin}
    Under assumptions \ref{A1} and \ref{B1}, the following estimates on $\{{\tt u}_n\}$ hold.
    \begin{itemize}
    \item[a)] There exists a constant $C>0$, independent of $n$ such that
    \begin{align}
        \underset{n\in \mathbb{N}}\sup\, \bigg[ \underset{s\in [0,T]}\sup\,\| {\tt u}_n(s)\|_{L^2(D)}^2 + \int_{0}^{T}  \| {\tt u}_n(s)\|_{W_0^{1,2}(D)}^2 \,{\rm d}s \bigg] \le C\,. \label{esti:apriori-skeleton-galerkin}
    \end{align}
    \item[b)] For $\alpha \in (0, \frac{1}{2})$, there exists $C_{\alpha} > 0$ such that
    \begin{align*}
        \underset{n\in \mathbb{N}}\sup\,\bigg[  \| {\tt u}_n\|_{W^{\alpha,2}([0,T];{W^{-1,2}(D)})}\bigg] \le C_{\alpha} \,.
    \end{align*}
    \end{itemize}
\end{lem}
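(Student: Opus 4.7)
The proof of (a) is a standard chain-rule energy estimate combined with the log-Gronwall inequality from Lemma~\ref{lem:log-Gronwall}. Applying the chain rule to $\|{\tt u}_n(t)\|_{L^2(D)}^2$ in \eqref{eq:log-skeleton-finite-dim}, and writing $F_n(r):=\int_{\pmb E}\eta({\tt u}_n(r);z)(g(r,z)-1)\,m({\rm d}z)$ together with $\varphi(r):=\int_{\pmb E}{\tt h}_2(z)|g(r,z)-1|\,m({\rm d}z)$, I would control the logarithmic nonlinearity by \eqref{inq:log-sov-1} with $\epsilon=1/2$ (absorbing half of the dissipation into the left-hand side). For the control coupling, Cauchy--Schwarz and the sub-linear growth in \ref{B1}, combined with $1+\theta\le 2$, yield
\begin{align*}
\big|\big({\tt u}_n(r),F_n(r)\big)_{L^2(D)}\big| \le C\|{\tt u}_n(r)\|_{L^2(D)}\big(1+\|{\tt u}_n(r)\|_{L^2(D)}^\theta\big)\varphi(r) \le C\big(1+\|{\tt u}_n(r)\|_{L^2(D)}^2\big)\varphi(r).
\end{align*}
By Lemma~\ref{lem:identity-eta}(a), $\|\varphi\|_{L^1([0,T])}\le C_{2,1}^N$ uniformly for $g\in S_N$. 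Assembling these estimates produces an inequality of exactly the form required by Lemma~\ref{lem:log-Gronwall}, with bounded $h$, bounded $f$ (of the form $C+C\varphi$), and constant $g$; its application then delivers \eqref{esti:apriori-skeleton-galerkin}.

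For (b), I would use the integral form
\begin{align*}
{\tt u}_n(t)-{\tt u}_n(s) = \int_s^t \Delta {\tt u}_n\,{\rm d}r + \int_s^t P_n\big[{\tt u}_n\log|{\tt u}_n|\big]\,{\rm d}r + \int_s^t P_n F_n(r)\,{\rm d}r
\end{align*}
and bound each piece separately in $W^{-1,2}(D)$. The Laplacian contribution satisfies $\|\int_s^t\Delta {\tt u}_n\,{\rm d}r\|_{W^{-1,2}(D)}\le (t-s)^{1/2}\|{\tt u}_n\|_{L^2([0,T];W_0^{1,2}(D))}$ and is therefore H\"older-$\tfrac{1}{2}$ by (a). For the logarithmic term, \eqref{inq:important-log} with $(1+\epsilon)q^*\le 2$, the embedding \eqref{embedding-1}, and a standard duality argument (exploiting that $P_n$ is $L^2$-orthogonal and $\|P_n\phi\|_{W_0^{1,2}(D)}\le\|\phi\|_{W_0^{1,2}(D)}$ for every $\phi\in W_0^{1,2}(D)$) give $\|P_n[{\tt u}_n\log|{\tt u}_n|]\|_{W^{-1,2}(D)}\le C\|{\tt u}_n\log|{\tt u}_n|\|_{L^{q^*}(D)}\le C$ uniformly, so that the second integral is Lipschitz in $t$. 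Both terms thus embed continuously into $W^{\alpha,2}([0,T];W^{-1,2}(D))$ for every $\alpha<1/2$.

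The main obstacle is the control integral, since \ref{B1} and (a) only yield $\|F_n(r)\|_{L^2(D)}\le C\varphi(r)$, and $\varphi$ is uniformly only in $L^1([0,T])$ for $g\in S_N$; the usual Cauchy--Schwarz route producing H\"older continuity therefore fails. I would bypass this by exploiting the monotonicity of $\Phi(t):=\int_0^t\varphi(r)\,{\rm d}r$ and estimating the Gagliardo seminorm directly. Using $(\Phi(t)-\Phi(s))^2\le\|\Phi\|_{L^\infty([0,T])}(\Phi(t)-\Phi(s))$ for $s\le t$ and interchanging integration order on the triangular region $\{s\le r\le t\}$ via Fubini, one obtains
\begin{align*}
\int_0^T\!\!\int_0^T \frac{(\Phi(t)-\Phi(s))^2}{|t-s|^{1+2\alpha}}\,{\rm d}t\,{\rm d}s \le \frac{\|\Phi\|_{L^\infty}}{\alpha(1-2\alpha)}\int_0^T \varphi(r)\,r^{1-2\alpha}\,{\rm d}r \le \frac{T^{1-2\alpha}}{\alpha(1-2\alpha)}\|\varphi\|_{L^1([0,T])}^2,
\end{align*}
which is finite for every $\alpha<1/2$. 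Since $P_n$ is $L^2$-non-expansive and $L^2(D)\hookrightarrow W^{-1,2}(D)$, this transfers to the $W^{\alpha,2}([0,T];W^{-1,2}(D))$-seminorm of $\int_0^\cdot P_n F_n\,{\rm d}r$. Combining the three bounds and invoking Lemma~\ref{lem:identity-eta}(a) concludes (b).
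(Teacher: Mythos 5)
Your proof is correct and takes essentially the same approach as the paper's: chain rule, logarithmic Sobolev inequality with $\epsilon=\tfrac12$, and the log-Gronwall inequality for part (a); decomposition of ${\tt u}_n$ into the initial, Laplacian, logarithmic, and control pieces together with the identical linearize-and-Fubini treatment of the control term's Gagliardo seminorm (i.e., writing $(\Phi(t)-\Phi(s))^2\le\|\Phi\|_{L^\infty}(\Phi(t)-\Phi(s))$ and switching the integration order) for part (b). The only difference is cosmetic: you spell out the Hölder/Lipschitz bounds for the Laplacian and logarithmic pieces that the paper imports by reference to \cite[Lemma 4.6]{Pan-2022}.
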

\begin{proof}
    We use the chain-rule, Cauchy-Schwartz inequality, Poincar{\'e} inequality and the logarithmic Sobolev inequality \eqref{inq:log-sov-1} with $\epsilon = \frac{1}{2}$ to have
    \begin{align}
        &  \left \| {\tt u}_n(t) \right \|_{L^2(D)}^{2}  +  \int_{0}^{t} \| {\tt u}_{n}(t)\|_{W_{0}^{1,2}(D)}^{2} \, {\rm d}s  \notag \\ 
         & \le \left \| u_0 \right \|_{L^2(D)}^{2} + C\int_0^t \Phi_2(s)\,{\rm d}s  +  C \int_0^t (1+ \Phi_2(s))\|{\tt u}_n(s)\|_{L^2(D)}^2\,{\rm d}s \notag \\
        & \qquad + 2 \int_0^t \|{\tt u}_n(s)\|_{L^2(D)}^2 \log \big(\|{\tt u}_n(s)\|_{L^2(D)}\big)\,{\rm d}s\,,\notag 
    \end{align} 
    where $\Phi_2(s)$ is given by
    $$\Phi_2(s):=\int_{\pmb{E}} {\tt h}_2(z) |g(s,z)-1|\,m({\rm d}z).$$
    One can use Gronwall's lemma with logarithmic nonlinearity together with Lemma \ref{lem:identity-eta} to arrive at the uniform estimate \eqref{esti:apriori-skeleton-galerkin}. To prove the assertion ${\rm b)}$, we write ${\tt u}_n$ as 
    \begin{align*}
{\tt u}_n(t)&= P_n u_0 + \int_0^t \Delta {\tt u}_n(s)\,{\rm d}s + \int_0^t P_n\big( {\tt u}_n(s)\log(|{\tt u}_n(s)|)\big)\,{\rm d}s + \int_0^t  P_n \Big( \int_{\pmb{E}} \eta({\tt u}_n;z)\, (g(t,z)-1) \, m({\rm d}z)\Big)\,{\rm d}s \notag \\
& \equiv J_1 + J_2(t) + J_3(t) + J_4(t)\,.
    \end{align*}
    Following the calculation as in the proof of \cite[Lemma 4.6]{Pan-2022}, we have
    \begin{align*}
\|J_i\|_{W^{\alpha,2}([0,T];{W^{-1,2}(D)})}\le C_i,~~~(1\le i\le 3)\,.
    \end{align*}
    In view of \eqref{cond:linear-growth-ldp}, Lemma \eqref{lem:identity-eta} and the uniform estimate \eqref{esti:apriori-skeleton-galerkin} we have, for any $s,t \in [0,T]$,
\begin{align*}
\| J_4(t)-J_4(s)\|_{W^{-1,2}(D)}^2 & \le C \Big( \int_s^t \int_{\pmb{E}}\| \eta({\tt u}_n(r);z)\|_{L^2(D)}|g(r,z)-1|\,m({\rm d}z)\,{\rm d}r\Big)^2 \\
& \le C \Big( 1 + \sup_{t\in [0,T]}\|{\tt u}_n(t)\|_{L^2(D)}\Big)^2 \Big( \int_s^t \int_{\pmb{E}} {\tt h}_2(z)|g(r,z)-1|\,\,m({\rm d}z)\,{\rm d}r\Big)^2 \\
& \le C \int_0^T\Phi_2(s)\,{\rm d}s \int_s^t \Phi_2(r)\,{\rm d}r\,.
\end{align*}
This shows that, again in view of Lemma \eqref{lem:identity-eta},
$$ \int_0^T \| J_4(t)\|_{W^{-1,2}(D)}^2\,{\rm d}r \le C.$$
A simple application of Fubini's theorem reveals that there exists a constant $C>0$ such that for any $\alpha \in (0, \frac{1}{2})$
\begin{align*}
\int_0^T\int_0^T \int_s^t \frac{\Phi_2(r)}{|t-s|^{1+ 2 \alpha}}\,{\rm d}r\,{\rm d}s\,{\rm d}t \le C \int_0^T \Phi_2(r)\,{\rm d}r\,.
\end{align*}
Thus, there exists a constant $C_\alpha$, independent of $n$, such that
\begin{align*}
\|J_i\|_{W^{\alpha,2}([0,T];{W^{-1,2}(D)})}\le C_\alpha\,.
\end{align*}
  This completes the proof.   
\end{proof}
\subsection{Existence proof: skeleton equation \eqref{eq:log-skeleton}}\label{subsec:existence-skeleton}
Thanks to Lemmas \ref{lem:cpt} and \ref{lem:apriori-skeleton-galerkin}, we have the following lemma.
\begin{lem}\label{lem:conv-1-skeleton-galerkin}
 There exists a sub-sequence of $\{{\tt u}_n \}$, still denoted by $\{{\tt u}_n\}$, and $u_g \in L^{2}([0,T];L^{2}(D))$ such that
 \begin{itemize}
\item[i)] ${\tt u}_n \rightarrow u_g$ in $ L^{2}([0,T];L^{2}(D))$, and ${\tt u}_n(s,x)\rightarrow u_g(s,x)$ for a.e. $(s,x)\in [0,T]\times D$.
\item[ii)] $P_n\left[{\tt u}_{n} \log \left|{\tt u}_{n} \right|\right]\rightarrow u_g \log |u_g| $ in $L^r([0,T];W^{-1,2}(D))$ for $r\in (1,2)$. 
 \end{itemize}
\end{lem}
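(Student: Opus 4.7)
The plan is to combine the two bounds in Lemma \ref{lem:apriori-skeleton-galerkin} with the Flandoli--G\c{a}tarek compactness criterion (Lemma \ref{lem:cpt}) and then handle the logarithmic nonlinearity through Vitali's theorem. First I would take $\mathbb{X}=W_0^{1,2}(D)$, $\mathbb{Y}=L^2(D)$, $\mathbb{X}^*=W^{-1,2}(D)$, $q=2$ and $\alpha\in(0,\tfrac12)$ in Lemma \ref{lem:cpt}. By Lemma \ref{lem:apriori-skeleton-galerkin}, $\{{\tt u}_n\}$ is bounded in $L^{2}([0,T];W_0^{1,2}(D))\cap W^{\alpha,2}([0,T];W^{-1,2}(D))$, and hence relatively compact in $L^{2}([0,T];L^{2}(D))$. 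Extract a subsequence (not relabelled) converging to some $u_g\in L^{2}([0,T];L^{2}(D))$, and then a further subsequence so that ${\tt u}_n(s,x)\to u_g(s,x)$ for a.e. $(s,x)\in[0,T]\times D$. This proves assertion (i). By lower semicontinuity and \eqref{esti:apriori-skeleton-galerkin}, one also checks that $u_g\in L^{\infty}([0,T];L^{2}(D))\cap L^{2}([0,T];W_0^{1,2}(D))$, which will be needed below.

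For assertion (ii), I would first upgrade the a.e.~convergence to convergence of the nonlinearity. From the pointwise convergence ${\tt u}_n(s,x)\to u_g(s,x)$ and continuity of $a\mapsto a\log|a|$, we get ${\tt u}_n\log|{\tt u}_n|\to u_g\log|u_g|$ a.e. on $[0,T]\times D$. Next I would fix $q^*\in(1,2)$ satisfying the embedding \eqref{embedding-1}, choose $\epsilon>0$ with $(1+\epsilon)q^*\le 2$, and invoke the elementary bound \eqref{inq:important-log} to obtain
\begin{align*}
\|{\tt u}_n(s)\log|{\tt u}_n(s)|\|_{L^{q^*}(D)}
&\le C\Bigl(1+\|{\tt u}_n(s)\|_{L^2(D)}^{1+\epsilon}\Bigr),
\end{align*}
which by \eqref{esti:apriori-skeleton-galerkin} is bounded uniformly in $(n,s)$. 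Together with the embedding $L^{q^*}(D)\hookrightarrow W^{-1,2}(D)$, this shows that $\|{\tt u}_n\log|{\tt u}_n|\|_{W^{-1,2}(D)}^{r}$ is uniformly bounded in $L^{\infty}([0,T])$, hence equi-integrable on $[0,T]$. Vitali's convergence theorem then yields
\begin{align*}
{\tt u}_n\log|{\tt u}_n|\;\longrightarrow\;u_g\log|u_g|\qquad\text{in }L^{r}([0,T];W^{-1,2}(D)),\quad r\in(1,2).
\end{align*}

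Finally, to restore the projector I split
\begin{align*}
P_n[{\tt u}_n\log|{\tt u}_n|]-u_g\log|u_g|
=P_n\bigl[{\tt u}_n\log|{\tt u}_n|-u_g\log|u_g|\bigr]+\bigl(P_n[u_g\log|u_g|]-u_g\log|u_g|\bigr).
\end{align*}
Since the eigenbasis $\{e_j\}$ is simultaneously orthogonal in $L^{2}(D)$ and in $W_0^{1,2}(D)$, the operator $P_n$ is bounded on $W^{-1,2}(D)$ uniformly in $n$ and $P_n h\to h$ in $W^{-1,2}(D)$ for every $h\in W^{-1,2}(D)$. The first term therefore converges to $0$ in $L^{r}([0,T];W^{-1,2}(D))$ by the Vitali step above, while the second converges to $0$ pointwise in time and is dominated in $W^{-1,2}(D)$ by $C(1+\|u_g(s)\|_{L^2(D)}^{1+\epsilon})\in L^{\infty}([0,T])$, so dominated convergence completes the argument.

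\textbf{Main obstacle.} The delicate step is the passage to the limit in ${\tt u}_n\log|{\tt u}_n|$: because this nonlinearity is not Lipschitz and is superlinear near $\infty$ and singular in slope near $0$, one cannot invoke pure weak convergence and must instead combine the a.e.~convergence with a uniform higher integrability coming from \eqref{inq:important-log} and the $L^{\infty}([0,T];L^{2}(D))$ bound. The choice of $q^*$ in \eqref{embedding-1} and of $\epsilon$ with $(1+\epsilon)q^*\le 2$ is what makes the Vitali argument work in the range $r\in(1,2)$.
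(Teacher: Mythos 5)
Your proof is correct and expands exactly the route the paper intends: the paper attributes this lemma to the combination of Lemma \ref{lem:apriori-skeleton-galerkin} (the \textit{a priori} bounds) and the Flandoli--G\c{a}tarek criterion (Lemma \ref{lem:cpt}), which is precisely what you use, followed by the Vitali argument driven by \eqref{inq:important-log}, just as the paper does in the analogous stochastic case (Lemma \ref{lem:conv-martingale-2}). One small point worth making explicit: Vitali in $L^r([0,T];W^{-1,2}(D))$ requires a.e.\ (in $s$) convergence in the $W^{-1,2}(D)$-norm, which does not follow directly from a.e.\ convergence on $[0,T]\times D$; you should insert a preliminary Vitali or dominated-convergence step in the $x$-variable for fixed $s$ (using the same $(1+\epsilon)q^*<2$ domination together with the uniform $L^\infty([0,T];L^2(D))$ bound) to obtain, for a.e.\ $s$, convergence of ${\tt u}_n(s)\log|{\tt u}_n(s)|$ to $u_g(s)\log|u_g(s)|$ in $L^{q^*}(D)$, and hence in $W^{-1,2}(D)$, before invoking Vitali in the time variable.
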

Again, in view of {\it a-priori}  estimates in Lemma  \ref{lem:apriori-skeleton-galerkin}, the following weak convergence results hold.
\begin{align}\label{conv:weak-skeleton-galerkin}
\begin{cases}
{\tt u}_n \rightharpoonup u_g~\text{in}~L^2([0,T];W_0^{1,2}(D)), \\
{\tt u}_n \overset{*}{\rightharpoonup} u_g~\text{in}~L^{\infty}([0,T];L^{2}(D))\,,\\
\Delta{\tt u}_{n} \rightharpoonup \Delta u_g~\text{in}~L^2([0,T];W^{-1,2}(D))\,.
\end{cases}
\end{align}
One can easily get 
\begin{align}
    \underset{t \in [0,T]}\sup\, \|u_g(t)\|_{L^2(D)}^{2} \le C\,, \quad \int_{0}^{T} \| u_g(t)\|_{W_0^{1,2}(D)}^2 \,{\rm d}t \le C\,. \label{esti:uni-skeletin}
\end{align}
\begin{lem} The following convergence holds in $L^{\infty}([0,T]; W^{-1,2}(D)):$ 
\begin{align}
\int_0^{\cdot}P_n \Big( \int_{\pmb{E}}\eta({\tt u}_{n};z)(g(s,z)-1) m({\rm d}z)\Big)\,{\rm d}s \rightarrow  \int_0^{\cdot}\int_{\pmb{E}}\eta(u_g;z)(g(s,z)-1)\, m({\rm d}z)\,{\rm d}s\,. \label{conv:corection-term-skeleton-galerkin}
\end{align}
\end{lem}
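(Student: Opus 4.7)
The plan is to add and subtract the intermediate integrand $\eta(u_g;z)(g(s,z)-1)$ inside the projection, so that for every $t\in[0,T]$ the difference can be written as
\begin{align*}
\mathcal{D}_n(t) &= \int_0^t (P_n-I)\Big(\int_{\pmb{E}} \eta({\tt u}_n(s);z)(g(s,z)-1)\,m({\rm d}z)\Big)\,{\rm d}s \\
&\quad + \int_0^t \int_{\pmb{E}} \big[\eta({\tt u}_n(s);z)-\eta(u_g(s);z)\big](g(s,z)-1)\,m({\rm d}z)\,{\rm d}s.
\end{align*}
Since $\sup_{t\in[0,T]}\|\int_0^t f(s)\,{\rm d}s\|_{W^{-1,2}(D)}\le \int_0^T\|f(s)\|_{W^{-1,2}(D)}\,{\rm d}s$, it suffices to show that each of these two integrands converges to $0$ in $L^1([0,T];W^{-1,2}(D))$; the desired uniform-in-$t$ convergence in $W^{-1,2}(D)$ then follows for free.

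For the Lipschitz piece I will use \eqref{cond:lipschitz-ldp}, the continuous embedding $L^2(D)\hookrightarrow W^{-1,2}(D)$ and Fubini to dominate the $W^{-1,2}$-norm of the integrand by $C\int_{\pmb{E}} {\tt h}_1(z)|g(s,z)-1|\,m({\rm d}z)\cdot\|{\tt u}_n(s)-u_g(s)\|_{L^2(D)}$. From Lemma~\ref{lem:conv-1-skeleton-galerkin} (i), up to a subsequence, $\|{\tt u}_n(s)-u_g(s)\|_{L^2(D)}\to 0$ for almost every $s$, while \eqref{esti:apriori-skeleton-galerkin} and \eqref{esti:uni-skeletin} provide the uniform bound $\|{\tt u}_n(s)-u_g(s)\|_{L^2(D)}\le C$. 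Since $\Phi_1(s):=\int_{\pmb{E}} {\tt h}_1(z)|g(s,z)-1|\,m({\rm d}z)$ lies in $L^1([0,T])$ by Lemma~\ref{lem:identity-eta}, dominated convergence delivers the result.

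For the projection piece, set $F_n(s):=\int_{\pmb{E}}\eta({\tt u}_n(s);z)(g(s,z)-1)\,m({\rm d}z)$ and define $F(s)$ analogously with $u_g$. I will further decompose $(P_n-I)F_n=(P_n-I)(F_n-F)+(P_n-I)F$. The first summand is controlled by exactly the argument of the previous paragraph, since $P_n-I\colon L^2(D)\to W^{-1,2}(D)$ has norm bounded by a constant. For the second summand, I will use the basic fact that $\|(P_n-I)v\|_{L^2(D)}\to 0$ as $n\to\infty$ for every fixed $v\in L^2(D)$ (completeness of the eigenbasis $\{e_j\}$); applied pointwise at $v=F(s)$, this gives convergence to $0$ for almost every $s$, while \eqref{cond:linear-growth-ldp} combined with \eqref{esti:uni-skeletin} yields the domination $\|(P_n-I)F(s)\|_{L^2(D)}\le 2\|F(s)\|_{L^2(D)}\le C\Phi_2(s)$, with $\Phi_2\in L^1([0,T])$ by Lemma~\ref{lem:identity-eta}. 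A second application of dominated convergence concludes.

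The only technical point is ensuring all the driving envelopes are genuinely $L^1$ in time; this is precisely the content of Lemma~\ref{lem:identity-eta} and Lemma~\ref{lem:technical-control}, so no real obstacle is expected. The one place that requires a little care is the measurability/well-definedness of $F(s)$ for a.e. $s$, which follows from the $L^\infty_t L^2_x$ bound for $u_g$ inherited from the weak-$*$ convergence in \eqref{conv:weak-skeleton-galerkin}.
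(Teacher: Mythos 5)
Your proof is correct but follows a genuinely different route from the paper. The paper bounds the supremum by $2\int_0^T\int_{\pmb E}\|{\tt u}_n-u_g\|_{L^2(D)}{\tt h}_1(z)|g(s,z)-1|\,m({\rm d}z)\,{\rm d}s$ in a single step and then controls this by a Chebyshev-type argument: it introduces the exceptional set $A_{n,\varepsilon}=\{t:\|{\tt u}_n(t)-u_g(t)\|_{L^2}>\varepsilon\}$, uses the $L^2_tL^2_x$ convergence to show $\lambda_T(A_{n,\varepsilon})\to 0$, and invokes the uniform-smallness estimate of Lemma~\ref{lem:identity-eta}(b) on $A_{n,\varepsilon}$ together with Lemma~\ref{lem:identity-eta}(a) on the complement. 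You instead decompose the error into a Lipschitz term and a projection-defect term, and kill each by dominated convergence in $s$, with dominants $C\Phi_1(s)$ and $C\Phi_2(s)$ furnished by Lemma~\ref{lem:identity-eta}(a). Both strategies rest on the same two ingredients (the Lipschitz bound \ref{B1} and the time-integrability of $\Phi_i$), but your DCT route avoids manipulating exceptional-set measures and is arguably cleaner.

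A point in your favour: you explicitly isolate and estimate $(P_n-I)F$ where $F(s)=\int_{\pmb E}\eta(u_g(s);z)(g(s,z)-1)\,m({\rm d}z)$, using completeness of the eigenbasis for the pointwise decay and the $\Phi_2$-domination from \eqref{cond:linear-growth-ldp} and \eqref{esti:uni-skeletin}. The paper's displayed bound controls the entire difference only via the Lipschitz estimate, which does not by itself account for the projection defect on the limit term; your treatment is the more complete one.

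One minor loose end to tidy: when you say ``up to a subsequence, $\|{\tt u}_n(s)-u_g(s)\|_{L^2(D)}\to 0$ for almost every $s$,'' you should close the loop by observing that the limit you are proving is deterministic and nonnegative, so the subsequence principle (every subsequence of the $L^2([0,T];L^2(D))$-convergent sequence admits a further subsequence with a.e.\ convergence in $s$, to which DCT applies) upgrades the conclusion to the full sequence. Alternatively, you could bypass the subsequence extraction entirely by mimicking the paper's $A_{n,\varepsilon}$ split, which works directly for the full sequence.
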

\begin{proof}
For any $\varepsilon >0$, define the set 
$$A_{n, \varepsilon}:=\{ t\in [0,T]:~ \|{\tt u}_n(t)-u_g(t)\|_{L^2(D)}> \varepsilon\}.$$
Since ${\tt u}_n$ strongly converges to $u_g$ in $L^2([0,T];L^2(D))$, by Chebyshev inequality, we see that
$$ \lim_{n\goto \infty} \lambda_T (A_{n,\varepsilon}) \le \lim_{n\goto \infty} \frac{1}{\varepsilon^2} \int_0^T  \|{\tt u}_n(t)-u_g(t)\|_{L^2(D)}^2\,{\rm d}t=0.$$
Thus, by part $(b)$ of Lemma \ref{lem:identity-eta}, one has
\begin{align}
 \limsup_{n\goto \infty} \sup_{g\in S_N} \int_{A_{n,\varepsilon}}\int_{\pmb{E}} {\tt h}_1(z)|g(t,z)-1|\,m({\rm d}z)\,{\rm d}t < \varepsilon.\label{inq:conv-inter}
 \end{align}
Observe that, in view of \eqref{cond:lipschitz-ldp} and the uniform estimates \eqref{esti:apriori-skeleton-galerkin} and \eqref{esti:uni-skeletin}
\begin{align*}
& \sup_{t\in [0,T]} \Big\| \int_0^t P_n \Big( \int_{\pmb{E}}\eta({\tt u}_{n};z)(g(s,z)-1) m({\rm d}z)\Big)\,{\rm d}s -\int_0^{t}\int_{\pmb{E}}\eta(u_g;z)(g(s,z)-1)\, m({\rm d}z)\,{\rm d}s\Big\|_{W^{-1,2}(D)} \notag \\
& \le 2 \int_{0}^T \int_{\pmb{E}} \|{\tt u}_n -u_g\|_{L^2(D)} {\tt h}_1(z)|g(s,z)-1|\,m({\rm d}z)\,{\rm d}s \notag \\
&\le C \int_{A_{n,\varepsilon}}\int_{\pmb{E}} {\tt h}_1(z)|g(s,z)-1|\,m({\rm d}z)\,{\rm d}s + C \varepsilon \int_0^T\int_{\pmb{E}} {\tt h}_1(z)|g(s,z)-1|\,m({\rm d}z)\,{\rm d}s \le C(N)\varepsilon\,,
\end{align*}
where in the last inequality, we have used part $b)$ of Lemma \ref{lem:identity-eta} and the inequality \eqref{inq:conv-inter}. Since $\varepsilon>0$ is arbitrary, \eqref{conv:corection-term-skeleton-galerkin} holds as well. 
\end{proof}
\begin{rem}\label{rem:converence-related}
The above proof yields the following convergence result: let $x_n\rightarrow x$ in $L^2([0,T];L^2(D))$ and $\mathbf{h}\in \mathcal{H}_2 \cap L^2(\pmb{E},m)$. Then 
$$ \lim_{n\goto \infty} \sup_{\mathrm{k}\in S_N} \int_0^T \int_{\pmb{E}} \|x_n(s)-x(s)\|_{L^2(D)} \mathbf{h}(z)|\mathrm{k}(s,z)-1|\,m({\rm d}z)\,{\rm d}s=0.$$
\end{rem}
We use the convergence results in \eqref{conv:weak-skeleton-galerkin} and \eqref{conv:corection-term-skeleton-galerkin} together with Lemma \ref{lem:conv-1-skeleton-galerkin}, and pass to the limit as $n\rightarrow \infty$ in the equation satisfied by ${\tt u}_n$ to see that $u_g\in L^{\infty}([0,T];L^{2}(D))\cap L^2([0,T];W_0^{1,2}(D))$ satisfies the following equation in $W^{-1,2}(D)$:
\begin{align}
 \displaystyle u_g(t) = u_{0} + \int_{0}^{t} \Delta {u_g(s)}\, {\rm d}s + \int_{0}^{t} u_g(s) \log (|u_g(s)|) \, {\rm d}s + \int_{0}^{t}
 \int_{\pmb{E}}\eta(u_g(s);z)\, (g(s,z)-1) \, m({\rm d}z)\,{\rm d}s\,.\notag 
 \end{align}
 In other words, $u_g$ is a solution of the skeleton equation \eqref{eq:log-skeleton}. Note that, in view of Lemma \ref{lem:technical-control}, and 
the uniform estimate \eqref{esti:uni-skeletin},
\begin{align*}
\frac{d u_g}{{\rm d}t}\in L^2([0,T]; W^{-1,2}(D)) + L^1([0,T]; L^2(D))\,.
\end{align*}
This implies that $u_g\in C([0,T];L^2(D))$. 
\subsection{Uniqueness proof: skeleton equation}\label{subsec:uniqueness-skeleton}
Let $u_1$ and $u_2$ be two solutions of skeleton equation \eqref{eq:log-skeleton}. Set ${\tt u}_g:=u_1-u_2$. Then, using chain-rule, we get
\begin{align*}
& \|{\tt u}_g(t\wedge \tau_\delta)\|_{L^2(D)}^2 + 2 \int_{0}^{t\wedge \tau_\delta} \|{\tt u}_g(s)\|_{W_0^{1,2}(D)}^2\,{\rm d}s
= 2 \int_{0}^{t\wedge \tau_\delta} \Big\langle u_1 \log(|u_1|)-u_2 \log(|u_2|), {\tt u}_g\Big\rangle\,{\rm d}s \notag \\
&\quad + 2 \int_{0}^{t\wedge \tau_\delta} \int_{|z|>0}  \Big\langle \big\{ \eta(u_1;z)-\eta(u_2;z)\big\} (g(s,z)-1), {\tt u}_g\Big\rangle\, m({\rm d}z)\,{\rm d}s\equiv \mathcal{B}_4 + \mathcal{B}_5\,,
\end{align*}
where $\tau_\delta:=\inf\{ t\in (0,T]: \|u_1(t)-u_2(t)\|_{L^2(D)}> \delta\},~~\delta\in (0,1)$. 
Thanks to Cauchy-schwartz inequality and \eqref{cond:lipschitz-ldp}, we see that
\begin{align*}
\mathcal{B}_5 \le 2 \int_{0}^{t\wedge \tau_\delta} \Big(\int_{\pmb{E}} {\tt h}_1(z)|g(s,z)-1|\,m({\rm d}z)\Big)\|{\tt u}_g(s)\|_{L^2(D)}^2\,{\rm d}s\,.
\end{align*}
Thus, one can follow the same line of argument as invoked in the proof of \cite[Theorem $4.3$]{Shang-2022}~(see also subsection \ref{subsec:uniqueness}) together with Lemma \ref{lem:identity-eta} to conclude that ${\tt u}_g(t)=0$ for all $t\in [0,T]$. In other words, equation \eqref{eq:log-skeleton} has a unique solution. 
\subsection{Proof of Theorem \ref{thm:skeleton}} In view of subsections \ref{subsec:existence-skeleton} and \ref{subsec:uniqueness-skeleton}, the deterministic skeleton equation \eqref{eq:log-skeleton}  has a unique solution $u_g \in C([0,T];L^2(D))\cap L^2([0,T]; W_0^{1,2}(D))$. Moreover, following the same argument as invoked to achieve \eqref{esti:apriori-skeleton-galerkin}, one can prove the estimate \eqref{esti:uni-skeletin}.
\section{Large Deviation Principle} \label{sec:log-LDP}
This section provides the proof of Theorem \ref{thm:ldp-log-laplace-levy}.  According to \cite[Theorem 4.4]{Zhang-2023} and \cite[Theorem 3.2]{Matoussi-2021} which is an adaptation of the original results given in 
\cite[Theorems 2.3 and 2.4]{Brzezniak-2013}, to prove Theorem \ref{thm:ldp-log-laplace-levy}, it is sufficient to prove the following two conditions:
\begin{Assumptions3}
    \item \label{LDP1} For any $N \in \mathbb{N}$, let $\{\varphi_\epsilon:~\epsilon>0\} \subset \tilde{\mathcal{U}}_N$. Then, for any $\delta > 0$ 
        \begin{align}
            \underset{\epsilon \rightarrow 0}\lim \mathbb{P} \left\{\rho_T \left(\mathcal{G}^{\epsilon} (\epsilon N^{\epsilon^{-1}\,\varphi_{\epsilon}}), \mathcal{G}^{0}(\nu_{T}^{\varphi_{\epsilon}})
            \right) > \delta  \right\} = 0\,, \notag
        \end{align}
        where the metric $\rho_T$ is defined in \eqref{defi:metric}.
        \item \label{LDP2} For any given $N \in \mathbb{N}$, let $g_n, g \in S_N$ be such that $g_n \rightarrow g$ as $n \rightarrow \infty$. Then
\begin{align*} 
   \mathcal{G}^0 \big(\nu_{T}^{g_n} \big) \rightarrow \mathcal{G}^0 \big(\nu_{T}^{g} \big).
\end{align*}
\end{Assumptions3}
 Before proving these conditions, we state a technical lemma the proof of which can be found in  \cite[Lemma 3.11]{Budhiraja-2013}.
 Let $m_T:=\lambda_T \otimes m$. Define the space
\begin{align*}
\mathcal{H}_2([0,T]\times \pmb{E}):=\Big\{ \text{ $\mathsf{h}\in L^2([0,T]\times \pmb{E}, m_T)$ such that for all $\delta \in (0,\infty)$ and for all $E\in \mathcal{B}([0,T]\times \pmb{E})$} \\
 \text{with $m_T(E)< +\infty$, 
$\int_{E} \exp(\delta |\mathsf{h}(s,z)|)\,m({\rm d}z)\,{\rm d}s < + \infty $}\Big\}
\end{align*}
\begin{lem}\label{lem:technical-control-1} 
Let $\mathsf{h}(\cdot,\cdot)\in \mathcal{H}_2([0,T]\times \pmb{E})$. For fixed $N\in \mathbb{N}$, let $g, g_n \in S_N$ be such that 
$g_n \goto g$ as $n\rightarrow \infty$. Then 
\begin{align*}
    \underset{n \rightarrow \infty}\lim \int_0^T \int_{\pmb{E}} \mathsf{h}(s, z)\left(g_n(s, z)-1\right)\, m({\rm d}z) \, {\rm d}s= \int_0^T \int_{\pmb{E}} \mathsf{h}(s, z)(g(s, z)-1)\, m({\rm d}z) \, {\rm d}s .
\end{align*}
\end{lem}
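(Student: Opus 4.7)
The approach will be to reduce the lemma to continuous compactly supported test functions, for which convergence is automatic from the topology on $S_N$, and to absorb the remainder via the entropy bound $L_T(g)\le N$ through the Young--Fenchel inequality. Since $\tau(S_N)$ is the topology of vague convergence of the measures $g\,dm\,dt$, $g_n\to g$ gives $\int f\,g_n\,dm\,dt\to\int f\,g\,dm\,dt$ for every $f\in C_c([0,T]\times\pmb E)$, and so $\int f(g_n-1)\,dm\,dt\to\int f(g-1)\,dm\,dt$. The task is to extend this convergence from $C_c$ to all $\mathsf h\in\mathcal H_2([0,T]\times\pmb E)$.

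I would first truncate $\mathsf h$ by choosing an exhaustion $K_R\uparrow[0,T]\times\pmb E$ by finite-measure compact sets (available by $\sigma$-finiteness of $\pmb E$) and setting $\mathsf h_{M,R}:=\operatorname{sgn}(\mathsf h)(|\mathsf h|\wedge M)\mathbf 1_{K_R}$, then use Lusin's theorem to pick $f_{M,R,\kappa}\in C_c$ with $\|f_{M,R,\kappa}\|_\infty\le M$ and $m_T(\{\mathsf h_{M,R}\ne f_{M,R,\kappa}\})<\kappa$. The decomposition
\[
\int\mathsf h(g_n-g)\,dm\,dt=\int(\mathsf h-f_{M,R,\kappa})(g_n-g)\,dm\,dt+\int f_{M,R,\kappa}(g_n-g)\,dm\,dt
\]
reduces matters to showing the first term is uniformly small in $n$ once $M,R$ are large and $\kappa$ is small, since the second vanishes as $n\to\infty$ by the baseline continuity.

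The core estimate is the Young--Fenchel inequality for the Legendre pair $\ell(x)=x\log x-x+1$ and $\ell^*(y)=e^y-1$, namely
\[
|\psi|\,g\le\tfrac{1}{\sigma}\ell(g)+\tfrac{1}{\sigma}(e^{\sigma|\psi|}-1),\qquad\sigma>0,\;g\ge 0.
\]
Applied to $\psi=\mathsf h-\mathsf h_{M,R}$ and $\psi=\mathsf h_{M,R}-f_{M,R,\kappa}$, integration combined with $\int\ell(g)\,dm\,dt\le N$ yields
\[
\sup_{g\in S_N}\int|\psi|(g+1)\,dm\,dt\le\tfrac{N}{\sigma}+\tfrac{1}{\sigma}\int(e^{\sigma|\psi|}-1)\,dm\,dt+\int|\psi|\,dm\,dt,
\]
uniformly in $n$. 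Given $\epsilon>0$, I fix $\sigma$ so large that $N/\sigma<\epsilon/3$. Since $\mathsf h\in\mathcal H_2$, $e^{\sigma|\mathsf h|}$ is integrable on each finite-measure set $K_R$, so dominated convergence on $K_R$ makes the exponential integral vanish as $M\to\infty$. On $K_R^c$ one has $|\mathsf h-\mathsf h_{M,R}|=|\mathsf h|$; using the smallness of $m_T(\{|\mathsf h|>c\}\cap K_R^c)$ (from $\mathsf h\in L^2$) together with the bound $e^x-1\le xe^c$ valid on $\{|\mathsf h|\le c\}$ (integrated via Cauchy--Schwarz against $\mathsf h^2\in L^1$) finishes the tail control. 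The Lusin error is then handled by the same inequality applied to $\psi=\mathsf h_{M,R}-f_{M,R,\kappa}$, exploiting that this difference is bounded by $2M$ on a set of $m_T$-measure $<\kappa$.

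The principal obstacle is the joint limit in $\sigma$, $M$, $R$, and $\kappa$: $\sigma$ must be chosen large to kill the entropy prefactor $N/\sigma$, yet the exponential term $\tfrac{1}{\sigma}\int(e^{\sigma|\psi|}-1)\,dm\,dt$ must still be made small by the approximation; crucially, $\sigma$ can be fixed \emph{before} the truncation parameters, after which $\mathcal H_2$ provides the required exponential control on each finite-measure layer. A secondary subtlety is that $K_R^c$ may have infinite $m_T$-measure, so $\mathcal H_2$ does not apply there directly; this is resolved by splitting $K_R^c$ into $\{|\mathsf h|>c\}$ (small measure for large $c$, controlled by $L^2$) and $\{|\mathsf h|\le c\}$ (where $e^{\sigma|\mathsf h|}-1$ is linearly dominated by $\sigma|\mathsf h|e^{\sigma c}$, integrable via $\mathsf h\in L^2$).
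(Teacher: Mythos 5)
Your overall strategy is the right one and is essentially what the cited source \cite[Lemma~3.11]{Budhiraja-2013} does (the paper itself gives no proof and defers to that reference): reduce to test functions in $C_c([0,T]\times\pmb E)$ via truncation and Lusin, use the definition of $\tau(S_N)$ to pass to the limit in the continuous piece, and absorb the remainder through the Young--Fenchel pair $\ell(x)=x\log x-x+1$, $\ell^*(y)=e^y-1$, choosing $\sigma$ before the truncation parameters to kill the entropy factor $N/\sigma$. That much is correct and well organized.

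The gap is in the tail control over $K_R^c$. The hypothesis $\mathsf h\in\mathcal H_2([0,T]\times\pmb E)$ gives $\mathsf h\in L^2(m_T)$ but \emph{not} $\mathsf h\in L^1(m_T)$, and with only $L^2$ your remainder bound
\begin{align*}
\sup_{g'\in S_N}\int|\psi|(g'+1)\,{\rm d}m_T\;\le\;\frac{N}{\sigma}+\frac{1}{\sigma}\int\bigl(e^{\sigma|\psi|}-1\bigr)\,{\rm d}m_T+\int|\psi|\,{\rm d}m_T,
\end{align*}
with $\psi=\mathsf h-\mathsf h_{M,R}$ (so $|\psi|=|\mathsf h|$ on $K_R^c$), need not even be finite: $\int_{K_R^c}|\mathsf h|$ may diverge, and the Cauchy--Schwarz step you invoke for $\int_{K_R^c\cap\{|\mathsf h|\le c\}}|\mathsf h|$ fails because $m_T(K_R^c\cap\{|\mathsf h|\le c\})$ is typically infinite, so the second factor in $\|\mathsf h\|_{L^2}\cdot m_T(\cdot)^{1/2}$ blows up. Two repairs exist. (i) Add the hypothesis $\mathsf h\in L^1(m_T)$, which in fact holds in every application in the paper (e.g.\ in Lemma~\ref{lem:conv-c2-2} one has $|\mathsf h(t,z)|\le C\,{\tt h}_2(z)$ with ${\tt h}_2\in L^1(\pmb E,m)$); then $\int_{K_R^c}|\mathsf h|\to 0$ by dominated convergence and your argument closes. (ii) To stay within the stated $L^2\cap\mathcal H_2$ hypothesis, split by $\{g'\ge 2\}$ and $\{g'<2\}$: on $\{g'\ge 2\}$ the $m_T$-measure is bounded by $N/\ell(2)$ uniformly in $g'\in S_N$, so the Cauchy--Schwarz and the dominated convergence coming from $\mathcal H_2$ do apply there after Young--Fenchel; on $\{g'<2\}$ one has $(g'-1)^2\le C\,\ell(g')$, hence $\int_{K_R^c\cap\{g'<2\}}|\mathsf h|\,|g'-1|\le C\sqrt{N}\,\|\mathsf h\mathbf 1_{K_R^c}\|_{L^2}\to 0$. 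As written, your proof makes neither move and therefore does not close under the lemma's literal hypotheses.
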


\subsection{Proof of the condition \ref{LDP2}}
In this subsection, we prove condition \ref{LDP2}. To prove the condition it is enough to show that, for each fixed $N >0$, if $g_{n} \rightarrow g$ in $S_N$ then $v_n \rightarrow u_g$ in $C([0,T];L^{2}(D))\cap L^2([0,T];W_0^{1,2}(D))$, where $v_n$ and $u_g$ are the unique solution of \eqref{eq:log-skeleton} corresponding to $g_n$ and $g$ respectively.
\vspace{0.1cm}

To proceed further, we first derive {\it a-priori} estimate for $v_n$. One may follow the similar argument as done in the proof of Lemma \ref{lem:apriori-skeleton-galerkin} to find constants $C_{1,N}, C_{2,N}$ and $C_{\alpha, N}$, independent of $n$, such that 
\begin{align}\label{esti:apriori-c2}
\begin{cases}
  \displaystyle \underset{n\in \mathbb{N}}\sup\, \bigg[ \underset{s\in [0,T]}\sup\,\| v_n(s)\|_{L^2(D)}^2 + \int_{0}^{T}  \| v_n(s)\|_{W_0^{1,2}(D)}^2 \,{\rm d}s \bigg] \le C_{1,N}\,, \\
 \underset{n\in \mathbb{N}}\sup\,\bigg[  \| v_n\|_{W^{\alpha,2}([0,T];{W^{-1,2}(D)})}\bigg] \le C_{\alpha,N}\,,~\alpha\in (0,\frac{1}{2})\,. 
\end{cases}
\end{align}
Moreover, repeating the same argument as in subsection \ref{subsec:existence-skeleton}, we have the following: there exits a sub-sequence of $\{v_n\}$, still denoted by $\{v_n\}$, and ${\tt v}\in C([0,T]; L^2(D))\cap L^2([0,T];W_0^{1,2}(D))$ such that 
\begin{equation}\label{conv:c2-1}
\begin{aligned}
&v_n\rightarrow {\tt v} ~\text{in}~~L^2([0,T];L^2(D)), \quad v_n \overset{*}{\rightharpoonup} {\tt v}~\text{in}~L^{\infty}([0,T];L^{2}(D))\,, \\
& v_n \log(|v_n|) \rightarrow {\tt v}\log(|{\tt v}|)~\text{in}~L^r([0,T]; W^{-1,2}(D))~\text{for}~r\in (1,2)\,, \\
& \Delta v_n \rightharpoonup \Delta {\tt v}~\text{in}~L^2([0,T];W^{-1,2}(D)), \quad v_n \rightharpoonup {\tt v}~\text{in}~L^2([0,T];W^{1,2}_0(D))\,.
\end{aligned}
\end{equation}
We now prove that the limit function ${\tt v}$ is indeed a solution of \eqref{eq:log-skeleton}. Regarding the control term, we have the following lemma.
\begin{lem}\label{lem:conv-c2-2}
For any $\phi \in W_0^{1,2}(D)$, there holds
\begin{align}
 \lim_{n\goto \infty} \int_{0}^{T}\int_{\pmb{E}} \langle  \eta(v_n(t);z), \phi\rangle (g_n(t,z)-1)\,m({\rm d}z)\,{\rm d}t =\int_0^T\int_{\pmb{E}} \langle \eta({\tt v};z), \phi \rangle (g(t,z)-1)\,m({\rm d}z)\,{\rm d}t\,.\notag 
\end{align}
\end{lem}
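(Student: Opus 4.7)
The plan is to split the expression into two natural pieces. Writing $A_n := \int_0^T \int_{\pmb{E}} \langle \eta(v_n(t);z),\phi\rangle (g_n(t,z)-1)\,m({\rm d}z)\,{\rm d}t$ and $B := \int_0^T \int_{\pmb{E}} \langle \eta({\tt v}(t);z),\phi\rangle (g(t,z)-1)\,m({\rm d}z)\,{\rm d}t$, I would insert the intermediate quantity $\int_0^T \int_{\pmb{E}} \langle \eta({\tt v};z),\phi\rangle (g_n-1)\,m({\rm d}z)\,{\rm d}t$ to decompose $A_n - B = I_n^{(1)} + I_n^{(2)}$, where
\[
I_n^{(1)} := \int_0^T \int_{\pmb{E}} \langle \eta(v_n;z)-\eta({\tt v};z),\phi\rangle (g_n(t,z)-1)\,m({\rm d}z)\,{\rm d}t,
\]
\[
I_n^{(2)} := \int_0^T \int_{\pmb{E}} \langle \eta({\tt v};z),\phi\rangle (g_n(t,z)-g(t,z))\,m({\rm d}z)\,{\rm d}t,
\]
and show each tends to zero.

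For $I_n^{(1)}$, I would apply Cauchy--Schwartz in the $L^2(D)$-pairing together with the Lipschitz bound \eqref{cond:lipschitz-ldp} to get
\[
|I_n^{(1)}| \le \|\phi\|_{L^2(D)} \int_0^T \int_{\pmb{E}} {\tt h}_1(z)\,\|v_n(t)-{\tt v}(t)\|_{L^2(D)}\,|g_n(t,z)-1|\,m({\rm d}z)\,{\rm d}t.
\]
Since ${\tt h}_1 \in \mathcal{H}_2\cap L^2(\pmb{E},m)$, $v_n \to {\tt v}$ in $L^2([0,T];L^2(D))$ by \eqref{conv:c2-1}, and $\{g_n\}\subset S_N$, Remark \ref{rem:converence-related} applies directly and gives $I_n^{(1)}\to 0$.

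For $I_n^{(2)}$, the natural tool is Lemma \ref{lem:technical-control-1} with $\mathsf{h}(t,z):=\langle \eta({\tt v}(t);z),\phi\rangle$, which will yield $I_n^{(2)}\to 0$ as soon as $\mathsf{h}\in \mathcal{H}_2([0,T]\times\pmb{E})$ is verified. From Cauchy--Schwartz, the linear growth bound \eqref{cond:linear-growth-ldp}, and the uniform bound ${\tt v}\in L^\infty([0,T];L^2(D))$ recorded in \eqref{conv:c2-1}, one obtains $|\mathsf{h}(t,z)|\le C\,{\tt h}_2(z)$ for some constant $C>0$, so the $L^2(m_T)$-integrability is immediate from ${\tt h}_2\in L^2(\pmb{E},m)$ and $T<\infty$.

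The main obstacle is the exponential integrability clause in the definition of $\mathcal{H}_2([0,T]\times\pmb{E})$. Given any $E\in \mathcal{B}([0,T]\times\pmb{E})$ with $m_T(E)<\infty$ and any $\delta>0$, I would split at a level $M>0$: on $E\cap\{{\tt h}_2\le M\}$ the exponential is bounded by $e^{\delta CM}$, so the contribution is at most $m_T(E)\,e^{\delta CM}$; on $E\cap\{{\tt h}_2>M\}$ the contribution is bounded by $T\int_{\{{\tt h}_2>M\}}\exp(\delta C\,{\tt h}_2(z))\,m({\rm d}z)$. Markov's inequality applied to ${\tt h}_2\in L^2(\pmb{E},m)$ gives $m(\{{\tt h}_2>M\})<\infty$, and then Remark \ref{rem:regarding-condition} applied to ${\tt h}_2\in \mathcal{H}_2$ makes the last integral finite. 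This confirms $\mathsf{h}\in \mathcal{H}_2([0,T]\times\pmb{E})$ and completes the argument.
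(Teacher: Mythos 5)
Your proposal is correct and follows essentially the same route as the paper: the paper also inserts the intermediate term $\int_0^T\int_{\pmb{E}}\langle \eta({\tt v};z),\phi\rangle (g_n-1)\,m({\rm d}z)\,{\rm d}t$ and splits the difference into $\mathcal{J}_{n,1}$ (the $\eta$-difference term, killed via the argument behind Remark \ref{rem:converence-related}) and $\mathcal{J}_{n,2}$ (the $g_n-g$ term, killed via Lemma \ref{lem:technical-control-1} with $\mathsf{h}(t,z)=\langle\eta({\tt v}(t);z),\phi\rangle$). The only difference is that the paper states the $\mathcal{H}_2([0,T]\times\pmb{E})$-membership of $\mathsf{h}$ as something "one can easily check", whereas you carry out the verification explicitly (the bound $|\mathsf{h}|\le C{\tt h}_2$, the level-$M$ split, Markov's inequality, and Remark \ref{rem:regarding-condition}); this is a faithful expansion of the step the paper compresses, not a different method.
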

\begin{proof}
By re-writing, we see that, for any $\phi \in W_0^{1,2}(D)$
\begin{align*}
 &\int_{0}^{T}\int_{\pmb{E}} \langle  \eta(v_n(t);z), \phi\rangle (g_n(t,z)-1)\,m({\rm d}z)\,{\rm d}t \notag \\
 &=\int_{0}^{T}\int_{\pmb{E}} \langle  \big(\eta(v_n(t);z)-\eta({\tt v}(t);z)\big), \phi\rangle (g_n(t,z)-1)\,m({\rm d}z)\,{\rm d}t \notag \\
 & \quad + \int_{0}^{T}\int_{\pmb{E}} \langle  \eta({\tt v}(t);z), \phi\rangle \big(g_n(t,z)-g(t,z)\big)\,m({\rm d}z)\,{\rm d}t 
 + \int_0^T\int_{\pmb{E}} \langle \eta({\tt v};z), \phi \rangle (g(t,z)-1)\,m({\rm d}z)\,{\rm d}t \notag \\
&\equiv \mathcal{J}_{n,1} + \mathcal{J}_{n,2} + \int_0^T\int_{\pmb{E}} \langle \eta({\tt v};z), \phi \rangle (g(t,z)-1)\,m({\rm d}z)\,{\rm d}t\,.
\end{align*}
To prove the lemma, we need to show that
$$ \mathcal{J}_{n,1}, \mathcal{J}_{n,2}\rightarrow 0~~~~\text{as $n \rightarrow \infty$ }.$$
Following the proof of \eqref{conv:corection-term-skeleton-galerkin}, it is easy to check that
$$ \mathcal{J}_{n,1} \rightarrow 0~~~~\text{as $n \rightarrow \infty$ }.$$
For $\mathcal{J}_{n,2}$ we proceed as follows. Let $\mathsf{h}(t,z):=\langle \eta({\tt v}(t);z), \phi\rangle$. In view of \eqref{cond:linear-growth-ldp}, Remark \ref{rem:regarding-condition} and the fact that ${\tt v}\in L^\infty([0,T];L^2(D))$, one can easily check that $\mathsf{h}(\cdot,\cdot) \in \mathcal{H}_2([0,T]\times \pmb{E})$. Hence an application of Lemma \ref{lem:technical-control-1} yields that 
$$ \mathcal{J}_{n,2}\rightarrow 0~~~~\text{as $n \rightarrow \infty$ }.$$
This completes the proof. 
\end{proof}

In view of \eqref{conv:c2-1} and Lemma \ref{lem:conv-c2-2}, one can pass to the limit in the equation satisfied by $v_n$ and conclude that 
${\tt v}$ is indeed a solution of \eqref{eq:log-skeleton}. Moreover, thanks to the uniqueness of the solution of the skeleton equation, we get that ${\tt v}=u_g$. To complete proof of condition \ref{LDP2}, we need to show that $v_n \rightarrow u_g $ in $C([0,T];L^{2}(D))\cap L^2([0,T];W_0^{1,2}(D))$. Applying chain-rule and Poincar{\'e} inequality for the equation satisfied by $v_n-u_g$, we get
\begin{align}
    & \frac{{\rm d}}{{\rm d}t}\left \| v_n(t) -u_g(t) \right \|_{L^2(D)}^{2} + 2 \left \| v_n(t) -u_g(t) \right \|_{W_{0}^{1,2}(D)}^{2} \notag \\
    & \quad = 2\, \bigg\langle  {v}_{n}(t) \log \left|{v}_{n}(t) \right| - u_{g}(t) \log \left|u_{g}(t) \right|, {v}_{n}(t) - u_{g}(t) \bigg\rangle \notag\\
    & \quad + 2\, \bigg\langle \int_{\pmb{E}} \big( \eta({v}_{n}(t);z)\, (g_{n}(t,z)-1) - \eta(u_{g}(t);z)\, (g(t,z)-1) \big) m({\rm d}z), {v}_{n}(t) - u_{g}(t) \bigg\rangle \,. \notag  
\end{align}
We use Lemma \ref{lem:result-1}, the monotone property of the logarithmic function, and the uniform bound \eqref{esti:apriori-c2} to have
\begin{align*}
     & \frac{{\rm d}}{{\rm d}t} \left \| v_n(t) -u_g(t) \right \|_{L^2(D)}^{2} + \left \| v_n(t) -u_g(t) \right \|_{W_{0}^{1,2}(D)}^{2}
    \le \Psi(t)\left \| v_n(t) -u_g(t) \right \|_{L^2(D)}^{2} + \mathcal{I}_{1,n}(t)\,,
\end{align*}
where 
\begin{align*}
&\Psi(t)= C+ \int_{\pmb{E}} {\tt h}_1(z)|g(t,z)-1|\,m({\rm d}z)\,, \\
&\mathcal{I}_{1,n}(t)=2\int_{\pmb{E}}  \bigg\langle \eta(v_n(t);z)\, \big((g_{n}(t,z)-1) - (g(t,z)-1) \big) , {v}_{n}(t) - u_{g}(t) \bigg\rangle \,m({\rm d}z)\,.
\end{align*}
Hence, by Gronwall's lemma and (a)-Lemma \ref{lem:identity-eta}, we have
\begin{align*}
& \sup_{t\in [0,T]}\| v_n(t) -u_g(t)\|_{L^2(D)}^{2} + \int_0^T \left \| v_n(t) -u_g(t) \right \|_{W_{0}^{1,2}(D)}^{2}\,{\rm d}t  \notag \\
&\le \exp\Big( \int_0^T \Psi(s)\,{\rm d}s\Big) \int_0^T |\mathcal{I}_{1,n}(s)|\,{\rm d}s \le C(N) \int_0^T |\mathcal{I}_{1,n}(s)|\,{\rm d}s\,.\notag
\end{align*}
 Thus, we need to show that 
\begin{align}
 \mathcal{A}_n(T):=\int_0^T |\mathcal{I}_{1,n}(s)|\,{\rm d}s \goto 0~\text{as}~n\goto \infty\,. \label{conv:c2-3}
 \end{align}
  Thanks to Cauchy-Schwartz inequality, the assumption \eqref{cond:linear-growth-ldp} and the moment estimate \eqref{esti:apriori-c2}, we see that
  \begin{align*}
\mathcal{A}_n(T)& \le C \int_0^T \int_{\pmb{E}} {\tt h}_2(z) \big\{|g_n(t,z)-1| + |g(t,z)-1|\big\} \|{v}_{n}(t) - u_{g}(t)\|_{L^2(D)}\,m({\rm d}z)\,{\rm d}t\notag \\
& \le C \sup_{\mathrm{k}\in S_N} \int_0^T \int_{\pmb{E}} \|{v}_{n}(t) - u_{g}(t)\|_{L^2(D)} {\tt h}_2(z) |\mathrm{k}(t,z)-1|\,m({\rm d}z)\,{\rm d}t\,.
  \end{align*}
  Hence by Remark \ref{rem:converence-related}, we conclude that the assertion \eqref{conv:c2-3} holds true.  In other words, $v_n \rightarrow u_g$ in $C([0,T]; L^2(D))\cap L^2([0,T]; W_0^{1,2}(D))$. This completes the proof of condition \ref{LDP2}. 

\subsection{Proof of condition \ref{LDP1}}
In this subsection, we prove condition \ref{LDP1}. To proceed further, we need a certain generalization of the Girsanov theorem. For $\epsilon>0$, let $\varphi_\epsilon \in \tilde{\mathcal{A}}$. Set $\Psi_\epsilon=\frac{1}{\varphi_\epsilon}$. Then $\Psi_\epsilon\in \tilde{\mathcal{A}}$. According to \cite[Theorem III.3.24]{Jacod-1987}~( see also \cite[Theorem $6.1$]{Brzezniak-2023}), the process $\{\mathcal{M}_t^\epsilon(\Psi_\epsilon):~t\in [0,T]\}$ defined by
\begin{align*}
\mathcal{M}_{t}^{\epsilon} (\Psi_{\epsilon}) := \exp  \bigg \{ & \int_{(0,t] \times \pmb{E} \times [0, \epsilon^{-1}\varphi_\epsilon(s,z)] } \log\big(\Psi_{\epsilon}(s,z)\big) N({\rm d}s, {\rm d}z,{\rm d}r)  \\
        &  + \int_{(0,t] \times \pmb{E} \times[0, \epsilon^{-1}\varphi_\epsilon(s,z)]} \big(-\Psi_{\epsilon}(s,z) + 1\big)\,m({\rm d}z)\,{\rm d}s\,{\rm d}r) \bigg \}
\end{align*}
is a martingale on $(\Omega, \mathcal{F}, \mathbb{P}, \mathbb{F})$. Define a new probability measure on $(\Omega, \mathcal{F})$ as 
$$ \mathbb{Q}(A)=\int_{A} \mathcal{M}_{T}^{\epsilon} (\Psi_{\epsilon})\,d\mathbb{P},~~~A\in \mathcal{F}.$$
Then the followings hold.
\begin{itemize}
\item[a)] $\mathbb{P}$ and $\mathbb{Q}$ are mutually absolutely continuous.
\item[b)] Law of $\epsilon N^{\epsilon^{-1}\varphi_{\epsilon}}$ under $\mathbb{Q}$ and law of $\epsilon N^{\epsilon^{-1}}$ under $\mathbb{P}$ are equal on ${\tt M}_T$.
\end{itemize}
We recall the $u_\epsilon=\mathcal{G}^\epsilon(\epsilon N^{\epsilon^{-1}})$ is the unique strong solution of \eqref{eq:log-LDP} on the probability space $(\Omega, \mathcal{F}, \mathbb{P}, \mathbb{F})$. Define
$$\tilde{u}_\epsilon:=\mathcal{G}^\epsilon \big(\epsilon N^{\epsilon^{-1}\varphi_{\epsilon}}\big).$$
It follows that $\tilde{u}_\epsilon$, on the probability space $(\Omega, \mathcal{F}, \mathbb{P}, \mathbb{F})$, is the unique solution of the following controlled stochastic differential equation: for $(t,x)\in (0,T]\times D$,
\begin{equation} \label{eq:epsilon-main-1}
\begin{aligned}
    d\tilde{u}_{\epsilon}(t,x) - \Delta \tilde{u}_{\epsilon} \, {\rm d}t&= \tilde{u}_{\epsilon} \log |\tilde{u}_{\epsilon}| \,{\rm d}t + \epsilon \int_{\pmb{E}}\eta(\tilde{u}_{\epsilon};z) \Big( N^{\epsilon^{-1} {\varphi}_{\epsilon}}({\rm d}z,{\rm d}t)- \epsilon^{-1}\,m({\rm d}z)\,{\rm d}t\Big)\,,\\
    \tilde{u}_{\epsilon}(0,x) &= u_0(x)\,.
\end{aligned}
\end{equation}
We will use the following version of equation \eqref{eq:epsilon-main-1}:
\begin{equation} \label{eq:epsilon-main}
\begin{aligned}
    d\tilde{u}_{\epsilon}(t,x) - \Delta \tilde{u}_{\epsilon} \, {\rm d}t&= \tilde{u}_{\epsilon} \log |\tilde{u}_{\epsilon}| \,{\rm d}t + \epsilon \int_{\pmb{E}}\eta(\tilde{u}_{\epsilon};z) \widetilde{N}^{\epsilon^{-1} {\varphi}_{\epsilon}}({\rm d}z,{\rm d}t) +  \int_{\pmb{E}}\eta(\tilde{u}_{\epsilon};z)\, (\varphi_{\epsilon}(t,z)-1)\,m({\rm d}z)\,{\rm d}t\,, \\
    \tilde{u}_{\epsilon}(0,x) &= u_0(x)\,.
\end{aligned}
\end{equation}

We first derive {\it a-priori} estimate for $\tilde{u}_{\epsilon}$. 
\begin{lem}
    There exist $\epsilon_0 \in (0,1)$, and a constant $\tilde{C}_N >0$ such that 
    \begin{align}
         \sup_{0<\epsilon < \epsilon_0}\mathbb{E} \bigg[ \underset{s\in [0,T]}\sup\,\| \tilde{u}_{\epsilon}(s)\|_{L^2(D)}^2 + \int_{0}^{T}  \| \tilde{u}_{\epsilon}(s)\|_{W_0^{1,2}(D)}^2 \,{\rm d}s \bigg] \le \tilde{C}_N\,. \label{esti:apriori-for-cond-c1}
    \end{align} 
    \end{lem}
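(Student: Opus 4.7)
The plan is to apply the It\^{o}--L\'{e}vy formula to $\|\tilde{u}_\epsilon(t)\|_{L^2(D)}^2$ using equation \eqref{eq:epsilon-main}, and then mimic the strategy developed for the Galerkin sequence in Lemma \ref{lem:est-galerkin}, with two genuinely new features that arise from the controlled structure: the extra drift correction $\int_{\pmb{E}}\eta(\tilde{u}_\epsilon;z)(\varphi_\epsilon(s,z)-1)\,m({\rm d}z)\,{\rm d}s$ and the rescaled jump measure $\widetilde{N}^{\epsilon^{-1}\varphi_\epsilon}$. To make all manipulations rigorous, I would introduce the standard localising stopping times $\tau_R^\epsilon := \inf\{t\ge 0:\ \|\tilde{u}_\epsilon(t)\|_{L^2(D)}>R\}\wedge T$ and let $R\to\infty$ via Fatou at the end. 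After the It\^{o} step, the identity reads
\begin{align*}
\|\tilde{u}_\epsilon(t)\|_{L^2(D)}^2 &+ 2\int_0^t\|\tilde{u}_\epsilon(s)\|_{W_0^{1,2}(D)}^2\,{\rm d}s = \|u_0\|_{L^2(D)}^2 + 2\int_0^t\big(\tilde{u}_\epsilon(s)\log|\tilde{u}_\epsilon(s)|, \tilde{u}_\epsilon(s)\big)\,{\rm d}s \\
&+ 2\int_0^t\!\!\int_{\pmb{E}}\big(\eta(\tilde{u}_\epsilon(s);z),\tilde{u}_\epsilon(s)\big)(\varphi_\epsilon(s,z)-1)\,m({\rm d}z)\,{\rm d}s + \mathcal{M}_\epsilon(t) + \mathcal{J}_\epsilon(t),
\end{align*}
where $\mathcal{M}_\epsilon(t)$ collects the two compensated jump integrals and $\mathcal{J}_\epsilon(t)=\epsilon\int_0^t\!\int_{\pmb{E}}\|\eta(\tilde{u}_\epsilon(s);z)\|_{L^2(D)}^2\varphi_\epsilon(s,z)\,m({\rm d}z)\,{\rm d}s$ is the compensator from the quadratic variation of the jumps (factor $\epsilon^2\cdot\epsilon^{-1}=\epsilon$).

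Next I would bound each deterministic piece. For the logarithmic drift I apply the log-Sobolev inequality \eqref{inq:log-sov-1} with $\eps=\tfrac12$ to absorb $\|\tilde{u}_\epsilon\|_{W_0^{1,2}(D)}^2$ into the left-hand side, producing a $C\|\tilde{u}_\epsilon\|_{L^2(D)}^2+\|\tilde{u}_\epsilon\|_{L^2(D)}^2\log\|\tilde{u}_\epsilon\|_{L^2(D)}$ term. For the correction drift, Cauchy--Schwarz together with \eqref{cond:linear-growth-ldp} and $\theta\in[0,1)$ yields a bound of the form $C\int_0^t(1+\|\tilde{u}_\epsilon(s)\|_{L^2(D)}^2)\Phi_2(s)\,{\rm d}s$ with $\Phi_2(s):=\int_{\pmb{E}}{\tt h}_2(z)|\varphi_\epsilon(s,z)-1|\,m({\rm d}z)$; by Lemma \ref{lem:identity-eta}(a) the function $\Phi_2$ is in $L^1([0,T])$ uniformly in $\varphi_\epsilon\in\tilde{\mathcal{U}}_N$ with integral at most $C_{2,1}^N$. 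For $\mathcal{J}_\epsilon(t)$, \eqref{cond:linear-growth-ldp} gives
\begin{align*}
\mathcal{J}_\epsilon(t) \le C\epsilon\Big(1+\sup_{s\in[0,t]}\|\tilde{u}_\epsilon(s)\|_{L^2(D)}^{2\theta}\Big)\int_0^T\!\!\int_{\pmb{E}}{\tt h}_2^2(z)\,\varphi_\epsilon(s,z)\,m({\rm d}z)\,{\rm d}s \le \epsilon\, C\, C_{2,2}^N\big(1+\sup_{s\in[0,t]}\|\tilde{u}_\epsilon(s)\|_{L^2(D)}^2\big),
\end{align*}
again via Lemma \ref{lem:identity-eta}(a); choosing $\epsilon$ small enough (say $\epsilon_0$) the prefactor of the supremum is $<\tfrac14$.

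For the martingale part $\mathcal{M}_\epsilon$, the critical observation is that the compensator of $N^{\epsilon^{-1}\varphi_\epsilon}$ is $\epsilon^{-1}\varphi_\epsilon\,m({\rm d}z)\,{\rm d}s$, so that BDG applied to the principal stochastic integral gives
\begin{align*}
\mathbb{E}\Big[\sup_{r\in[0,t]}|\mathcal{M}_\epsilon(r)|\Big] \le C\,\mathbb{E}\Big(\epsilon\int_0^t\|\tilde{u}_\epsilon(s)\|_{L^2(D)}^2\int_{\pmb{E}}\|\eta(\tilde{u}_\epsilon;z)\|_{L^2(D)}^2\,\varphi_\epsilon\,m({\rm d}z)\,{\rm d}s\Big)^{1/2},
\end{align*}
and Young's inequality together with the bound already obtained on $\mathcal{J}_\epsilon$ gives $\tfrac14\mathbb{E}\sup_{s\le t}\|\tilde{u}_\epsilon(s)\|_{L^2(D)}^2 + \epsilon\,C(C_{2,2}^N)$. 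The auxiliary jump integral $\epsilon^2\int_0^t\!\int_{\pmb{E}}\|\eta\|_{L^2(D)}^2\widetilde{N}^{\epsilon^{-1}\varphi_\epsilon}$ is handled identically with an extra power of $\epsilon$ to spare. Collecting all pieces and absorbing the supremum terms, I obtain an inequality of the form
\begin{align*}
y_\epsilon(t) + a_\epsilon(t) \le h(T) + \int_0^t f(s)\,y_\epsilon(s)\,{\rm d}s + \int_0^t g(s)\,y_\epsilon(s)\log y_\epsilon(s)\,{\rm d}s,
\end{align*}
with $y_\epsilon(t)=\mathbb{E}\sup_{s\le t\wedge\tau_R^\epsilon}\|\tilde{u}_\epsilon(s)\|_{L^2(D)}^2$, $a_\epsilon(t)=\mathbb{E}\int_0^{t\wedge\tau_R^\epsilon}\|\tilde{u}_\epsilon\|_{W_0^{1,2}(D)}^2\,{\rm d}s$, $h(T)$ depending only on $N$ and $\|u_0\|_{L^2(D)}$, and $f,g\in L^1([0,T])$ with $L^1$-norms controlled by $C_{2,1}^N, C_{2,2}^N$ and the log-Sobolev constants. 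Lemma \ref{lem:log-Gronwall} then delivers a uniform double-exponential bound, and sending $R\to\infty$ via Fatou yields \eqref{esti:apriori-for-cond-c1}.

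The main obstacle is controlling the two terms that are specific to the controlled regime in a way that is \emph{simultaneously uniform in} $\epsilon\in(0,\epsilon_0)$ \emph{and in} $\varphi_\epsilon\in\tilde{\mathcal{U}}_N$: the drift correction contributes an $L^1$-in-time weight $\Phi_2$ whose integral is bounded only thanks to Lemma \ref{lem:identity-eta}(a), and the jump compensator $\mathcal{J}_\epsilon$ is not bounded pathwise but only in expectation, which is why the factor $\epsilon$ from the intensity scaling is essential to prevent blow-up. Once these two terms are handled, the logarithmic nonlinearity is absorbed by the standard combination of log-Sobolev and the nonlinear Gronwall Lemma \ref{lem:log-Gronwall}, exactly as in the uncontrolled case of Lemma \ref{lem:est-galerkin}.
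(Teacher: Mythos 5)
There is a genuine gap in the way you close the estimate with the logarithmic Gronwall lemma, and a second, related omission: the short-time restriction from Lemma~\ref{lem:est-galerkin} that makes the paper's argument work is nowhere in your write-up.

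You apply Lemma~\ref{lem:log-Gronwall} to $y_\epsilon(t)=\mathbb{E}\big[\sup_{s\le t\wedge\tau_R^\epsilon}\|\tilde u_\epsilon(s)\|_{L^2(D)}^2\big]$, i.e.\ \emph{after} taking expectations. To reach an inequality of the form $y_\epsilon+a_\epsilon\le h+\int f\,y_\epsilon+\int g\,y_\epsilon\log y_\epsilon$ from the It\^o identity you must bound $\mathbb{E}\big[\|\tilde u_\epsilon(s)\|_{L^2}^2\log\|\tilde u_\epsilon(s)\|_{L^2}^2\big]$ by $y_\epsilon(s)\log y_\epsilon(s)$; but $x\mapsto x\log x$ is convex, so Jensen gives $\mathbb{E}[X\log X]\ge \mathbb{E}[X]\log\mathbb{E}[X]$, which is the \emph{wrong} direction. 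The convexity of the logarithmic nonlinearity is precisely the obstruction that the paper is designed to circumvent: it applies Lemma~\ref{lem:log-Gronwall} pathwise (for a.e.\ $\omega$, before taking $\mathbb{E}$), obtaining $\sup_{s\le t\wedge\bar\tau_R}\|\tilde u_\epsilon(s)\|_{L^2}^p+\ldots\le\big(1+\bar M(t\wedge\bar\tau_R)\big)^{e^t}\exp\{\cdots\}$ where $\bar M$ collects the initial datum and the (random) martingale/jump contributions. Taking expectation then forces you to handle $\mathbb{E}\big[(1+\bar M)^{e^t}\big]$, and if you tried this directly on $[0,T]$ the exponent $e^T$ could be arbitrarily large, requiring arbitrarily high moments of the martingale supremum that are not a priori available. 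This is exactly why the paper first restricts to $t\le T_p:=\log\frac{p}{p-1+\theta}$, so that $e^{T_p}=\frac{p}{p-1+\theta}$, and after BDG the sublinear growth $\|\eta(u;z)\|\lesssim {\tt h}_2(z)(1+\|u\|_{L^2}^\theta)$ with $\theta<1$ produces powers of $\sup\|\tilde u_\epsilon\|_{L^2}$ that land at exactly $p$ and can be absorbed. Your proposal, at the level of $p=2$ and over the whole interval $[0,T]$, cannot close in either the pre- or post-expectation form.

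The second missing piece is the extension from $[0,T_p]$ to $[0,T]$. The paper derives the $p$-th moment bound for \emph{every} $p\ge 2$ on $[0,T_p]$ (this is why the It\^o computation is performed on $x\mapsto\|x\|_{L^2}^p$ rather than only $\|x\|_{L^2}^2$) and then invokes a restart argument in the spirit of \cite[Step~2, Theorem~5.4]{Shang-2022}: for $s\ge 0$ the shifted measure $N^\varphi_{t+s}(\cdot)-N^\varphi_s(\cdot)$ is a time-homogeneous PRM with respect to $\{\mathcal F_{t+s}\}_{t\ge0}$ having the same law as $N^\varphi_t(\cdot)$, so one can iterate the local estimate forward in time. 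Your proposal claims the estimate on $[0,T]$ in a single step and never mentions this iteration. The rest of your bookkeeping --- localisation via $\tau_R^\epsilon$, the log-Sobolev absorption of the drift, the bound on the control drift via Cauchy--Schwarz and Lemma~\ref{lem:identity-eta}(a), the quadratic-variation compensator picking up the favourable factor $\epsilon$, and BDG plus Young for the martingale --- is sound and agrees with what the paper does inside the short-time step; the failure is specifically in applying log-Gronwall post-expectation over $[0,T]$, without the $T_p$ restriction and restart.
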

\begin{proof}
For any $t\ge 0$  and $A\in \mathcal{B}(\pmb{E})$, define 
$$ N_t^\varphi(A):= N((0,t]\times A).$$ Then for any $s\ge 0$, $N_{t+s}^\varphi(\cdot)-N_s^\varphi(\cdot)$ is a time-homogeneous Poisson random measure with respect to the filtration $\mathcal{F}_t^s:=\mathcal{F}_{t+s},~t\ge 0$. Moreover, $N_{t+s}^\varphi(\cdot)-N_s^\varphi(\cdot)$ and $N_t^\varphi(\cdot)$ have the same distribution. Thus, in view of proof of \cite[Step $2$,Theorem $5.4$]{Shang-2022}, to prove the {\it a-priori}  estimate \eqref{esti:apriori-for-cond-c1}, we need to derive the following a-priori estimate: for any $p\ge 2$, there exists a constant $C=C(p,\theta, N, \|u_0\|_{L^2(D)})$ such that 
\begin{align}
\sup_{0<\epsilon < \epsilon_0} \mathbb{E}\Big[ \sup_{t\in [0, T_p]}\|\tilde{u}_\epsilon(t)\|_{L^2(D)}^p + \int_0^{T_p} \|\tilde{u}_\epsilon(s)\|_{L^2(D)}^{p-2} \|\tilde{u}_\epsilon(s)\|_{W_0^{1,2}(D)}^2\,{\rm d}s\Big]\le C(p,\theta, N, \|u_0\|_{L^2(D)})\,, \label{esti:apriori-for-cond-c1-intermidiate}
\end{align}
where $T_p$ is defined in  Lemma \ref{lem:est-galerkin}. To prove \eqref{esti:apriori-for-cond-c1-intermidiate}, 
we follow the similar argument as used in Lemma \ref{lem:est-galerkin}.  We apply It$\hat{o}$-L\'{e}vy formula to the functional $x\mapsto \|x\|_{L^2(D)}^p,~p\ge 2$ on $\tilde{u}_\epsilon$ and use Taylor's formula together with the logarithmic Sobolev inequality to have
\begin{align}
& \|\tilde{u}_\epsilon(t)\|_{L^2(D)}^p + \frac{p}{2} \int_0^t \|\tilde{u}_\epsilon(s)\|_{L^2(D)}^{p-2} \|\tilde{u}_\epsilon(s)\|_{W_0^{1,2}(D)}^2\,{\rm d}s \notag \\
& \le \|u_0\|_{L^2(D)}^p + C \int_0^t \|\tilde{u}_\epsilon(s)\|_{L^2(D)}^p\,{\rm d}s + \int_0^t \|\tilde{u}_\epsilon(s)\|_{L^2(D)}^p \log(\|\tilde{u}_\epsilon(s)\|_{L^2(D)}^p)\,{\rm d}s \notag \\
& \quad  + C_p \epsilon^2 \int_0^t \int_{\pmb{E}}\Big\{ \|\tilde{u}_\epsilon(s)\|_{L^2(D)}^{p-2}\|\eta(\tilde{u}_\epsilon(s);z)\|_{L^2(D)}^2 + 
\|\eta(\tilde{u}_\epsilon(s);z)\|_{L^2(D)}^p \Big\} N^{\epsilon^{-1} {\varphi}_{\epsilon}}({\rm d}z,{\rm d}s) \notag \\
&\qquad  + \epsilon\,p \sup_{r\in [0,t]} \Big| \int_0^r \int_{\pmb{E}}  \|\tilde{u}_\epsilon(s)\|_{L^2(D)}^{p-2} \langle \eta(\tilde{u}_\epsilon(s);z), \tilde{u}_\epsilon(s)\rangle \widetilde{N}^{\epsilon^{-1} {\varphi}_{\epsilon}}({\rm d}z,{\rm d}s)\Big| \notag \\
& \qquad \quad+ p \int_0^t \int_{\pmb{E}}  \|\tilde{u}_\epsilon(s)\|_{L^2(D)}^{p-2} \langle \eta(\tilde{u}_\epsilon(s);z), \tilde{u}_\epsilon(s)\rangle (\varphi_\epsilon(s,z)-1)\,m({\rm d}z)\,{\rm d}s\equiv \sum_{i=1}^6\mathcal{R}_i(t)\,. \label{esti:1-apriori-for-cond-c1-intermidiate}
\end{align}
In view of Cauchy-Schwartz inequality, the growth condition \eqref{cond:linear-growth-ldp} and Young's inequality, we have
\begin{align}
\mathcal{R}_6(t)& \le C\int_0^t \|\tilde{u}_\epsilon(s)\|_{L^2(D)}^{p-2} \big( 1 + \|\tilde{u}_\epsilon(s)\|_{L^2(D)}^2\big)\Psi_{2,\epsilon}(s)\,{\rm d}s \notag \\
& \le C \int_0^t \Psi_{2,\epsilon}(s)\,{\rm d}s + C\int_0^t \Psi_{2,\epsilon}(s)\|\tilde{u}_\epsilon(s)\|_{L^2(D)}^2\,{\rm d}s\,, \label{esti:2-apriori-for-cond-c1-intermidiate}
\end{align}
where $\Psi_{2,\epsilon}(s)$ is given by 
$$\Psi_{2,\eps}(s):= \int_{\pmb{E}} {\tt h}_2(z)|\varphi_{\epsilon}(s,z)-1|\,m({\rm d}z)\,.$$
Using \eqref{esti:2-apriori-for-cond-c1-intermidiate} in \eqref{esti:1-apriori-for-cond-c1-intermidiate}, and then applying log-Gronwall's inequality in Lemma \ref{lem:log-Gronwall} along with Lemma \ref{lem:identity-eta}, we have, after taking expectation
\begin{align}
&\mathbb{E}\Big[ \sup_{s\in [0, t\wedge \bar{\tau}_R]} \|\tilde{u}_\epsilon(s)\|_{L^2(D)}^p + \frac{p}{2} \int_0^{t\wedge \bar{\tau}_R} \|\tilde{u}_\epsilon(s)\|_{L^2(D)}^{p-2} \|\tilde{u}_\epsilon(s)\|_{W_0^{1,2}(D)}^2\,{\rm d}s \Big]\notag \\
& \le \mathbb{E}\Big[ (1+ \bar{M}(t\wedge \bar{\tau}_R))^{e^{T_p}} \exp \Big\{ C e^{T_p} \int_{0}^t \big(1+ \Psi_{2,\epsilon}(s)\big)\,{\rm d}s\Big\}\Big]\notag \\
& \le C(N, p, \theta) \mathbb{E}\Big[ \big( 1 + \bar{M}(t\wedge \bar{\tau}_R)\big)^\frac{p}{p-1+\theta}\Big]\,, \label{esti:3-apriori-for-cond-c1-intermidiate}
\end{align}
where 
\begin{align*}
\bar{M}(t):&= \|u_0\|_{L^2(D)}^p + C \int_0^t \Psi_{2,\epsilon}(s)\,{\rm d}s + \mathcal{R}_4(t)+ \mathcal{R}_5(t)\,, \\
\bar{\tau}_R:&= \inf \Big \{t>0:\|\tilde{u}_\epsilon(t)\|_{L^2(D)} >R \Big \} \wedge T_p\,.
\end{align*}
Observe that
\begin{align}
&  \mathbb{E}\Big[ \big( 1 + \bar{M}(t\wedge \bar{\tau}_R)\big)^\frac{p}{p-1+\theta}\Big] \notag \\
& \le C(p,\theta) \Big\{ 1 + \|u_0\|_{L^2(D)}^\frac{p^2}{p-1+\theta} + (C_{1,1}^N)^\frac{p}{p-1+\theta}\Big\} \notag \\
& + C(p,\theta) \,\epsilon^\frac{p}{p-1+\theta} \,\mathbb{E}\Bigg[\underset{r \in [0,t \wedge \bar{\tau}_{R}]}\sup \left |\int_{0}^{r} \int_{\pmb{E}} \|\tilde{u}_\epsilon(s) \|_{L^2(D)}^{p-2} \langle \eta(\tilde{u}_\epsilon(s);z), \tilde{u}_\epsilon(s) \rangle  \,   \widetilde{N}^{\epsilon^{-1}\varphi_\epsilon}({\rm d}z,{\rm d}s)   \right |^{\frac{p}{p-1+\theta}}\Bigg] \notag \\
& + C(p,\theta)\,\epsilon^\frac{2p}{p-1+\theta}\, \mathbb{E} \Bigg[\bigg( \int_{0}^{t\wedge \bar{\tau}_{R}} \int_{\pmb{E}} \| \tilde{u}_\epsilon(s)\|_{L^2(D)}^{p-2} 
\| \eta(\tilde{u}_\epsilon(s);z) \|_{L^2(D)}^{2}   \,N^{\epsilon^{-1}\varphi_\epsilon}({\rm d}z,{\rm d}s) \bigg)^{\frac{p}{p-1+\theta}}\Bigg] \notag \\
      & \quad +  C(p,\theta)\,\epsilon^\frac{2p}{p-1+\theta}\,  \mathbb{E} \Bigg[ \bigg(\int_{0}^{t\wedge \bar{\tau}_{R}} \int_{\pmb{E}} \| \eta(\tilde{u}_\epsilon(s);z)\|_{L^2(D)}^p  \,N^{\epsilon^{-1}\varphi_\epsilon}({\rm d}z,{\rm d}s) \bigg)^{\frac{p}{p-1+\theta}}\Bigg]\equiv 
      \sum_{i=7}^{10} \mathcal{R}_i\,. \label{esti:4-apriori-for-cond-c1-intermidiate}
\end{align}
In view of Burkholder-Davis-Gundy inequality, \cite[Corollary $2.4$]{Liu-2019}, and the Young's inequality
$$ ab \le \epsilon^{-{\tt s}} a^P + c(P,Q) \epsilon^\frac{{\tt s}(p-1)}{\theta} b^Q,~~\text{with $\frac{1}{P} + \frac{1}{Q}=1$ for $P=\frac{p-1+\theta}{p-1}$ and $1<{\tt s}<\frac{p}{p-1+\theta}$ },$$
 one can proceed with a similar calculation as done for the estimation of $\mathcal{K}_2$ to have
\begin{align}
\mathcal{R}_8 & \le \epsilon \mathbb{E}\Big[ \sup_{s\in [0, t\wedge \bar{\tau}_R]} \|\tilde{u}_\epsilon(s)\|_{L^2(D)}^p\Big] + C \,\epsilon\,
\mathbb{E}\Big[ \int_0^{t\wedge \bar{\tau}_R} \int_{\pmb{E}} \|\eta(\tilde{u}_\epsilon(s);z)\|_{L^2(D)}^\frac{p}{\theta} \varphi_\epsilon(s,z)\,m({\rm d}z)\,{\rm d}s\Big]\notag \\
& + C \epsilon\, \mathbb{E}\Big[ \Big(\int_0^{t\wedge \bar{\tau}_R} \int_{\pmb{E}} \|\eta(\tilde{u}_\epsilon(s);z)\|_{L^2(D)}^2\, \varphi_\epsilon(s,z)\,m({\rm d}z)\,{\rm d}s\Big)^\frac{p}{2\theta}\Big]\equiv \sum_{i=1}^3\mathcal{R}_{8,i}\,.\notag
\end{align}
Thanks to the assumption \eqref{cond:linear-growth-ldp}, Lemma \eqref{lem:identity-eta} and the fact that $0\le {\tt h}_2 \le 1 $, we have
\begin{align*}
\mathcal{R}_{8,2} &\le C\,\epsilon\, \mathbb{E}\Big[ \int_{0}^{t\wedge \bar{\tau}_{R}} \int_{\pmb{E}} \big( 1 + \| \tilde{u}_\epsilon(s)\|_{L^2(D)}^p
\big) |{\tt h}_2(z)|^2 \big( \varphi_\epsilon(s,z) +1\big)\,m({\rm d}z)\,{\rm d}s\Big] \notag \\
& \le  C\,\epsilon \,\mathbb{E}\Big[ \big( 1 + \sup_{s\in [0, t\wedge \bar{\tau}_R]} \|\tilde{u}_\epsilon(s)\|_{L^2(D)}^p\big) \int_{0}^{t\wedge \bar{\tau}_{R}} \int_{\pmb{E}}|{\tt h}_2(z)|^2 \big( \varphi_\epsilon(s,z) +1\big)\,m({\rm d}z)\,{\rm d}s\Big] \notag \\
& \le C(N, \epsilon) + C \,\epsilon\, \mathbb{E}\Big[ \sup_{s\in [0, t\wedge \bar{\tau}_R]} \|\tilde{u}_\epsilon(s)\|_{L^2(D)}^p\Big]\,, \notag \\
\mathcal{R}_{8,3} & \le C\,\epsilon\,  \mathbb{E}\Big[ \big( 1 + \sup_{s\in [0, t\wedge \bar{\tau}_R]} \|\tilde{u}_\epsilon(s)\|_{L^2(D)}^p\big) \Big(
\int_{0}^{t\wedge \bar{\tau}_{R}} \int_{\pmb{E}}|{\tt h}_2(z)|^2 \big( \varphi_\epsilon(s,z) +1\big)\,m({\rm d}z)\,{\rm d}s\Big)^\frac{p}{2\theta}\Big] \notag \\
& \le C(N,p,\theta,T) + C(N,p,\theta,T)\,\epsilon \,  \mathbb{E}\Big[ \sup_{s\in [0, t\wedge \bar{\tau}_R]} \|\tilde{u}_\epsilon(s)\|_{L^2(D)}^p\Big]\,.
\end{align*}
Hence, we have
\begin{align}
\mathcal{R}_8\le C\,\epsilon\, \Big\{ 1 + \mathbb{E}\Big[ \sup_{s\in [0, t\wedge \bar{\tau}_R]} \|\tilde{u}_\epsilon(s)\|_{L^2(D)}^p\Big]\Big\}\,. \label{est:r8}
\end{align}
Again, using \cite[Corollary $2.4$]{Liu-2019}, Young's inequality, the assumption \eqref{cond:linear-growth-ldp}, Lemma \eqref{lem:identity-eta} and the fact that $0\le {\tt h}_2 \le 1 $, one can easily deduce that
\begin{align}
\mathcal{R}_9 + \mathcal{R}_{10}\le C\,\epsilon\, \Big\{ 1 + \mathbb{E}\Big[ \sup_{s\in [0, t\wedge \bar{\tau}_R]} \|\tilde{u}_\epsilon(s)\|_{L^2(D)}^p\Big]\Big\}\,. \label{est:r9+r10}
\end{align}
Combining \eqref{est:r8}, \eqref{est:r9+r10} in \eqref{esti:4-apriori-for-cond-c1-intermidiate} and \eqref{esti:3-apriori-for-cond-c1-intermidiate}, 
we conclude the following: there exist $\epsilon_0 \in (0,1)$ and a constant $C=C(p,\theta, N,T, \|u_0\|_{L^2(D)})$ such that for all $0<\epsilon < \epsilon_0$,
$$ \mathbb{E}\Big[ \sup_{s\in [0, t\wedge \bar{\tau}_R]} \|\tilde{u}_\epsilon(s)\|_{L^2(D)}^p + \frac{p}{2} \int_0^{t\wedge \bar{\tau}_R} \|\tilde{u}_\epsilon(s)\|_{L^2(D)}^{p-2} \|\tilde{u}_\epsilon(s)\|_{W_0^{1,2}(D)}^2\,{\rm d}s \Big] \le C(p,\theta, N,T, \|u_0\|_{L^2(D)}).$$
This completes the proof.
\end{proof}

\subsubsection{Proof of condition \ref{LDP1}}
For any $N \in \mathbb{N}$, let $\{\varphi_\epsilon:~\epsilon \in (0, \epsilon_0)\} \subset \tilde{\mathcal{U}}_N$. For any $R, L>0$ and $\delta \in (0,1)$, we define
\begin{align*}
\tau_R^{\epsilon}  &:=\inf \left\{t>0:\|\tilde{u}_{\epsilon}(t)\|_{L^2(D)} >R\right\}, \quad 
\tau_L^{\epsilon} :=\inf \left\{t>0: \int_0^t\|\tilde{u}_{\epsilon}(s)\|_{W_0^{1,2}(D)}^2\,{\rm d}s>L\right\}, \\
\tau_{\delta}^{\epsilon} & :=\inf \{t>0:\|\tilde{u}_{\epsilon}(t)-u_{\varphi_\epsilon}(t)\|_{L^2(D)}>\delta\}, \quad 
\tau^{\epsilon}  :=\tau_R^{\epsilon} \wedge \tau_L^{\epsilon} \wedge \tau_{\delta}^{\epsilon}  \wedge T\,,
\end{align*}
where $\tilde{u}_\epsilon$ resp. $u_{\varphi_\epsilon}$ is the unique solution of \eqref{eq:epsilon-main} resp. \eqref{eq:log-skeleton} with $g$ replaced by $\varphi_\epsilon$. Note that, since $\varphi_\epsilon \in \widetilde{\mathcal{U}}_N$, by \eqref{esti:uni-skeleton} there exists a constant $C_N$, independent of $\epsilon$, such that $\mathbb{P}$-a.s., 
\begin{align}
\sup_{\epsilon \in (0,\epsilon_0)}\Bigg\{ \sup_{s\in [0,T]}\|u_{\varphi_\epsilon}(s)\|_{L^2(D)}^2 + \int_0^T\|u_{\varphi_\epsilon}(s)\|_{W_0^{1,2}(D)}^2\,{\rm d}s \Bigg\} \le C_N\,. \label{esti:apriori-for-cond-c1-control}
\end{align}
Observe that, in view of \eqref{esti:apriori-for-cond-c1} and Markov's inequality, one has for any fixed $T_* \le T$
\begin{equation}\label{conv:cond-c1-stoppingtimes}
\begin{aligned}
&\sup_{\epsilon \in (0,\epsilon_0)} \mathbb{P}\Big( \tau_R^{\epsilon} \le T_*\Big) \le \frac{1}{R^2} \sup_{\epsilon \in (0,\epsilon_0)} \mathbb{E}\Big[ \|\tilde{u}_\eps(T_* \wedge \tau_R^{\epsilon})\|_{L^2(D)}^2\Big]\le \frac{\tilde{C}_N}{R^2} \rightarrow 0~~\text{as $R\rightarrow \infty$} \\
&\sup_{\epsilon \in (0,\epsilon_0)} \mathbb{P}\Big( \tau_L^{\epsilon} \le T_*\Big)\le \frac{1}{L} \sup_{\epsilon \in (0,\epsilon_0)}\mathbb{E}\Big[ \int_0^T 
\|\tilde{u}_{\epsilon}(s)\|_{W_0^{1,2}(D)}^2\,{\rm d}s\Big] \le  \frac{\tilde{C}_N}{L} \rightarrow 0~~\text{as $L\rightarrow \infty$}\,.
\end{aligned}
\end{equation}
By applying It$\hat{o}$-L\'{e}vy formula to the functional $y\mapsto \|y\|_{L^2(D)}^2$ on $ \tilde{v}_\eps:=\tilde{u}_\epsilon - u_{\varphi_\epsilon}$, we get
\begin{align}
   & \|\tilde{v}_\epsilon(t\wedge\tau^{\epsilon})\|_{L^2(D)}^2 + 2 \int_{0}^{t\wedge\tau^{\epsilon}} \|\tilde{v}_{\epsilon}(s)\| _{W_0^{1,2}(D)}^2 \,{\rm d}s \notag \\
 & = 2 \int_{0}^{t\wedge\tau^{\epsilon}} \big\langle  \tilde{u}_{\epsilon}(s) \log |\tilde{u}_{\epsilon}(s)|- u_{\varphi_\epsilon}(s)\log |u_{\varphi_\epsilon}(s)|, \tilde{u}_{\epsilon}(s)-u_{\varphi_\epsilon}(s) \big\rangle \,{\rm d}s
 \notag \\
 & + 2 \int_{0}^{t\wedge\tau^{\epsilon}} \int_{\pmb{E}} \big\langle \eta(\tilde{u}_{\epsilon}(s);z) - \eta(u_{\varphi_\epsilon}(s);z) , \tilde{v}_{\epsilon}(s) \big\rangle (\varphi_{\epsilon}(s,z)-1) \, m({\rm d}z)\,{\rm d}s
 \notag \\
  & + 2\,\epsilon \int_{0}^{t\wedge\tau^{\epsilon}} \int_{\pmb{E}} \big\langle  \eta(\tilde{u}_{\epsilon}(s);z), \tilde{v}_{\epsilon}(s) \big\rangle  \big( N^{\epsilon^{-1}\,\varphi_{\epsilon}} ({\rm d}z,{\rm d}s) - \epsilon^{-1}\varphi_{\epsilon}\, m({\rm d}z)\,{\rm d}s \big)
 \notag \\
  & + \epsilon^2 \int_{0}^{t\wedge\tau^{\epsilon}} \int_{\pmb{E}} \|\eta(\tilde{u}_{\epsilon}(s);z)\|_{L^2(D)}^2 N^{\epsilon^{-1}\,\varphi_{\epsilon}} ({\rm d}z,{\rm d}s) \equiv \sum_{i=1}^{4}\mathcal{T}_{i}^{\epsilon}(t) \,. \label{eq:cond-c1-1}
\end{align}
We use Lemma \ref{lem:result-1}, uniform estimate \eqref{esti:apriori-for-cond-c1-control} and the definition of $\tau^{\epsilon}$ to get
\begin{align}
    \mathcal{T}_{1}^{\epsilon}(t) \le & \int_{0}^{t\wedge\tau^{\epsilon}} \|\tilde{v}_{\epsilon}(s)\| _{W_0^{1,2}(D)}^2 \,{\rm d}s + \frac{R^{2(1-\alpha)}}{(1-\alpha)e} \int_{0}^{t} \|\tilde{v}_\epsilon(s\wedge\tau^{\epsilon})\|_{L^2(D)}^{2\alpha} \,{\rm d}s 
     + C \int_{0}^{t} \|\tilde{v}_\epsilon(s\wedge\tau^{\epsilon})\|_{L^2(D)}^2\,{\rm d}s\,. \notag
     \end{align}
In view of Cauchy-Schwartz-inequality, one has
\begin{align}
     \mathcal{T}_{2}^{\epsilon}(t) \le &  C \int_{0}^{t\wedge\tau^{\epsilon}} \Psi_{1,\epsilon}(s)\|\tilde{v}_\epsilon(s)\|_{L^2(D)}^2{\rm d}s \le  C \int_{0}^{t} \Psi_{1,\epsilon}(s)\|\tilde{v}_\epsilon(s\wedge\tau^{\epsilon})\|_{L^2(D)}^2{\rm d}s\,, \notag
\end{align}
where $\Psi_{1,\epsilon}(s)$ is given by
$$ \Psi_{1,\eps}(s):= \int_{\pmb{E}} {\tt h}_1(z)|\varphi_{\epsilon}(s,z)-1|\,m({\rm d}z)\,.$$
Combining all the above inequalities in \eqref{eq:cond-c1-1}, and then using Lemma \ref{lem:nonlinear-gronwall}, we get, for all $t\le T$
\begin{align}
    \underset{s \in [0,t]} \sup \|\tilde{v}_\epsilon(s\wedge\tau^{\epsilon})\|_{L^2(D)}^2
    & \le \Bigg \{\left( \underset{s \in [0,T]} \sup |\mathcal{T}_{3}^{\epsilon}(s)| + \underset{s \in [0,T]} \sup |\mathcal{T}_{4}^{\epsilon}(s)|  \right )^{1-\alpha} 
    \times \exp \left ((1-\alpha) \int_{0}^{t} (1+ \Psi_{1,\epsilon}(s))\,{\rm d}s  \right) \notag \\
   & \qquad \qquad +  \frac{R^{2(1-\alpha)}}{e} \int_{0}^{t} \exp \left ((1-\alpha) \int_{s}^{t} (1+ \Psi_{1,\epsilon}(r))\,{\rm d}r \right) \,{\rm d}s \Bigg \}^{\frac{1}{1-\alpha}} \notag \\
        & \quad \le 2^{\frac{\alpha}{1-\alpha}} \Bigg \{\left( \underset{s \in [0,T]} \sup |\mathcal{T}_{3}^{\epsilon}(s)| + \underset{s \in [0,T]} \sup |\mathcal{T}_{4}^{\epsilon}(s)| \right )
    \times \exp \left ( \int_{0}^{t}(1+ \Psi_{1,\eps}(s))\,{\rm d}s  \right) \Bigg \} \notag \\
   & \qquad + 2^{\frac{\alpha}{1-\alpha}} \Bigg \{ \frac{R^{2(1-\alpha)}}{e} \int_{0}^{t} \exp \left ((1-\alpha) \int_{s}^{t} (1+ \Psi_{1,\epsilon}(r))\,{\rm d}r \right) \,{\rm d}s \Bigg \}^{\frac{1}{1-\alpha}} \notag \\
   & \le 2^{\frac{\alpha}{1-\alpha}} \left( \underset{s \in [0,T]} \sup |\mathcal{T}_{3}^{\epsilon}(s)| + \underset{s \in [0,T]} \sup |\mathcal{T}_{4}^{\epsilon}(s)| \right )\pmb{H}_1 + \pmb{H}_2\,, \label{inq:cond-c1-2}
\end{align}
where $\pmb{H}_1$ and $\pmb{H}_2$ are given by
\begin{align*}
\pmb{H}_1:&= \exp \left ( \int_{0}^{t}(1+ \Psi_{1,\eps}(s))\,{\rm d}s  \right)\,, \\
\pmb{H}_2:&= 2^{\frac{\alpha}{1-\alpha}} \Bigg \{ \frac{R^{2(1-\alpha)}}{e} \int_{0}^{t} \exp \left ((1-\alpha) \int_{s}^{t} (1+ \Psi_{1,\epsilon}(r))\,{\rm d}r \right) \,{\rm d}s \Bigg \}^{\frac{1}{1-\alpha}}\,.
\end{align*}
Since $\varphi_\epsilon \in \widetilde{\mathcal{U}}_N$, by Lemma \ref{lem:identity-eta}, we have $\mathbb{P}$-a.s.,
\begin{align}
& \pmb{H}_1 \le \exp\Big( t + C_{1,1}^N\Big), \quad 
 \pmb{H}_2 \le \Big( \frac{t2^\alpha}{e}\Big)^\frac{1}{1-\alpha} R^2 \exp\Big( T + C_{1,1}^N\Big)\,. \label{inq:cond-c1-3}
\end{align}
Taking expectation in both sides of \eqref{inq:cond-c1-2} and using \eqref{inq:cond-c1-3}, we get for $t\le T$
\begin{align}
    &\mathbb{E} \Big[  \underset{s \in [0,t]} \sup \|\tilde{v}_\epsilon(s\wedge\tau^{\epsilon})\|_{L^2(D)}^2 \Big] \notag \\
    & \quad \le 2^{\frac{\alpha}{1-\alpha}} \exp\Big( t + C_{1,1}^N\Big)\mathbb{E} \Big[ \underset{s \in [0,T]} \sup |\mathcal{T}_{3}^{\epsilon}(s)| + \underset{s \in [0,T]} \sup |\mathcal{T}_{4}^{\epsilon}(s)|  \Big] + \Big( \frac{t2^\alpha}{e}\Big)^\frac{1}{1-\alpha} R^2 \exp\Big( T + C_{1,1}^N\Big)\,. \label{inq:cond-c1-4}
\end{align}
Again, in view of Lemma \ref{lem:identity-eta}, we observe that
\begin{align}
\mathbb{E}\Big[\underset{s \in [0,T]} \sup |\mathcal{T}_{4}^{\epsilon}(s)| \Big] 
& \le 2\, \epsilon^2\, \mathbb{E}\Bigg[ \Big( 1 + \sup_{t\in [0,T]}\|\tilde{u}_\epsilon(t)\|_{L^2(D)}^2\Big) \int_0^T \int_{\pmb{E}} |{\tt h}_2(z)|^2 (\varphi_\epsilon +1)\,m({\rm d}z)\,{\rm d}s \Bigg] \notag \\
& \le 2\, \epsilon^2\, C_{0,2}^N \Big( 1 + \mathbb{E}\Big[ \sup_{t\in [0,T]}\|\tilde{u}_\epsilon(t)\|_{L^2(D)}^2\Big]\Big)\,. \label{inq:cond-c1-5-1}
\end{align}
Moreover, the BDG inequality, Cauchy-Schwartz and Young's inequalities implies that 
\begin{align}
\mathbb{E}\Big[\underset{s \in [0,T]} \sup |\mathcal{T}_{3}^{\epsilon}(s)| \Big] & \le 
C\, \epsilon\, \mathbb{E}\Big[ \Big( \int_0^T \int_{\pmb{E}} \|\eta(\tilde{u}_\epsilon(s);z)\|_{L^2(D)}^2 \|\tilde{v}_\epsilon(s)\|_{L^2(D)}^2 N^{\eps^{-1}\varphi_\epsilon}({\rm d}z,{\rm d}s)\Big)^\frac{1}{2}\Big] \notag \\
& \le C\, \epsilon^\frac{1}{2}\,\mathbb{E} \left [ \int_0^T \int_{\pmb{E}} |{\tt h}_2(z)|^2 \big( 1 + \|\tilde{u}_\epsilon(s)\|_{L^2(D)}^2\big) \varphi_\epsilon(s,z)\,m({\rm d}z)\,{\rm d}s \right ] \notag \\
&  \quad + \frac{1}{2} \epsilon^\frac{1}{2}\,\mathbb{E} \left [  \underset{s \in [0,t]} \sup \|\tilde{v}_\epsilon(s\wedge\tau^{\epsilon})\|_{L^2(D)}^2 \right ] \notag \\
& \le  \frac{1}{2}\, \epsilon^\frac{1}{2}\,\mathbb{E} \left [  \underset{s \in [0,t]} \sup \|\tilde{v}_\epsilon(s\wedge\tau^{\epsilon})\|_{L^2(D)}^2 \right ] + C\, \epsilon^\frac{1}{2}\, C_{0,2}^N \Bigg( 1 + \mathbb{E}\Big[ \|\tilde{u}_\epsilon(s)\|_{L^2(D)}^2\Big] \Bigg)\,, \label{inq:cond-c1-5}
\end{align}
where in the last inequality, we have used Lemma \ref{lem:identity-eta} as $\varphi_\epsilon \in \widetilde{\mathcal{U}}_N$. We use \eqref{inq:cond-c1-5-1} and \eqref{inq:cond-c1-5} in the inequality \eqref{inq:cond-c1-4} together with the uniform estimates \eqref{esti:apriori-for-cond-c1} and \eqref{esti:apriori-for-cond-c1-control} to obtain
\begin{align}
\mathbb{E}\Big[ \sup_{s\in [0,t]}\| \tilde{v}_\epsilon(s\wedge\tau^{\epsilon})\|_{L^2(D)}^2\Big] \le C(N, \alpha) \epsilon^\frac{1}{2} + 
\Big( \frac{t2^\alpha}{e}\Big)^\frac{1}{1-\alpha} R^2 \exp\Big( T + C_{1,1}^N\Big), \quad \forall~t\le T\,. \label{inq:cond-c1-6}
\end{align}
By using \eqref{inq:cond-c1-6} in \eqref{eq:cond-c1-1}, we have
\begin{align}
\mathbb{E}\Big[ \int_{0}^{t\wedge\tau^{\epsilon}} \|\tilde{v}_{\epsilon}(s)\| _{W_0^{1,2}(D)}^2 \,{\rm d}s\Big] \le
C_1(N, \alpha) \epsilon^\frac{1}{2}+ C(R, T, N) \Big( \frac{t2^\alpha}{e}\Big)^\frac{1}{1-\alpha} + CT \frac{R^{2(1-\alpha)}}{(1-\alpha)e}
, \quad \forall~t\le T\,. \notag
\end{align}
Thus, we have
\begin{align}
&  \underset{\epsilon \rightarrow 0}{\limsup}\Bigg\{ \mathbb{E}\Big[ \sup_{s\in [0,t]}\| \tilde{v}_\epsilon(s\wedge\tau^{\epsilon})\|_{L^2(D)}^2\Big]  + \mathbb{E}\Big[ \int_{0}^{t\wedge\tau^{\epsilon}} \|\tilde{v}_{\epsilon}(s)\| _{W_0^{1,2}(D)}^2 \,{\rm d}s\Big]\Bigg\} \notag \\
& \le  C(R, T, N) \Big( \frac{t2^\alpha}{e}\Big)^\frac{1}{1-\alpha} + CT \frac{R^{2(1-\alpha)}}{(1-\alpha)e}
, \quad \forall~t\le T\,. \label{inq:cond-c1-7}
\end{align}
Setting $T^*:=\frac{e^2}{16T} \wedge T$ and then letting $\alpha \rightarrow 1$ in \eqref{inq:cond-c1-7}, we obtain
\begin{align}
    \underset{\epsilon \rightarrow 0}{\lim} \Bigg\{ \mathbb{E} \left [ \|\tilde{v}_\epsilon(t\wedge\tau^{\epsilon})\|_{L^2(D)}^2 \right] + \mathbb{E} \Big[ \int_{0}^{t\wedge\tau^{\epsilon}} \|\tilde{v}_{\epsilon}(s)\| _{W_0^{1,2}(D)}^2 \,{\rm d}s  \Big]\Bigg\} = 0,  \quad \forall \, 0 \leq t \leq T^*\,. \label{inq:cond-c1-8}
\end{align}
By the definition of $\tau^{\epsilon}$ along with Chebyshev's inequality, \eqref{conv:cond-c1-stoppingtimes} and  \eqref{inq:cond-c1-8}, we see that 
\begin{align*}
& \mathbb{P}\Big( \rho_{T^*}^2(\tilde{u}_\eps, u_{\varphi_\epsilon})> 2 \delta^2\Big) \notag \\
& \le \mathbb{P} \Big[\underset{t \in [0,T^*]}{\sup} \|\tilde{v}_{\epsilon}(t)\|_{L^2(D)} > \delta  \Big] +  \mathbb{P} \Big[\int_{0}^{T^*} \|\tilde{v}_{\epsilon}(s)\| _{W_0^{1,2}(D)}^2 \,{\rm d}s > \delta^2 \Big] \notag \\
& \le \frac{1}{\delta^2} \Bigg\{  \mathbb{E} \left [ \|\tilde{v}_\epsilon(T^*\wedge\tau^{\epsilon})\|_{L^2(D)}^2 \right] + \mathbb{E} \Big[ \int_{0}^{T^*\wedge\tau^{\epsilon}} \|\tilde{v}_{\epsilon}(s)\| _{W_0^{1,2}(D)}^2 \,{\rm d}s  \Big]\Bigg\} \notag \\
& \qquad +2 \underset{\epsilon \in (0,\epsilon_0)}{\sup} \mathbb{P} \left ( \tau_R^{\epsilon} \le T^* \right)  + 2\underset{\epsilon \in (0,\epsilon_0)}{\sup} \mathbb{P} \left ( \tau_L^{\epsilon} \le T^* \right) \longrightarrow 0~~\text{as $\epsilon \rightarrow 0$}\,. 
\end{align*} 
In other words, the condition \ref{LDP1} holds true on the interval $[0,T^*]$. Again by considering the equations satisfied by $\tilde{u}_{\epsilon}$ and $u_{\varphi_\epsilon}$ on the interval $[T^*, T]$ with the initial values $\tilde{u}_{\epsilon}(T^*)$ and $u_{\varphi_\epsilon}(T^*)$ respectively, and noting the fact that $\|\tilde{u}_{\epsilon}(T^*)- u_{\varphi_\epsilon}(T^*)\| \overset{\mathbb{P}}{\rightarrow} 0$ as $\epsilon \rightarrow 0$, one can use the same argument as above to conclude that
\begin{align*}
    \mathbb{P} \Big(\rho_{T^*,2T^*\wedge T}(\tilde{u}_{\epsilon},u_{\varphi_\epsilon})^2 > 2\delta^2\Big) \rightarrow 0, \quad \text{as $ \epsilon \rightarrow 0$}\,.
\end{align*}
Similarly, for $n \ge 2$
\begin{align*}
    \mathbb{P}\Big(\rho_{(n-1)T^*,nT^*\wedge T}(\tilde{u}_{\epsilon},u_{\varphi_\epsilon})^2 > 2\delta^2 \Big) \rightarrow 0, \quad \text{as $ \epsilon \rightarrow 0$}\,.
\end{align*}
Since there exists some $n >0$ such that $nT^* \ge T$, we arrive at assertion that
\begin{align*}
    \mathbb{P} \Big(\rho_T(\tilde{u}_{\epsilon},u_{\varphi_\epsilon})^2 > \delta\Big) \rightarrow 0 , \quad \text{as $ \epsilon \rightarrow 0$}\,.
\end{align*}
This completes the proof of condition \ref{LDP1}.

\section{Declarations} The authors would like to make the following declaration statements.

\begin{itemize}
\item {\bf Funding:} The first author acknowledges the financial support by CSIR~(09/086(1440)/2020-EMR-I), India.
\vspace{0.2 cm}

\item{\bf Ethical Approval:} This declaration is ``not applicable".
\vspace{0.2 cm}

\item {\bf Availability of data and materials:}  No data sets were generated during the current study and therefore data sharing is not applicable to this article. 
\vspace{0.2 cm} 

\item{\bf Conflict of interest:} The authors have not disclosed any competing interests.
\end{itemize}

\bibliographystyle{alphaabbr}
\bibliography{Refernce}

\end{document}